\title{On Convergence of the Secant Method\thanks
  {
    Yan Tan and Chenhao Ye are co-first authors
    with equal contributions
    while Qinghai Zhang is the corresponding author. 
    \funding
    {
      This reseach was supported in part by
      a grant from the Ministry of Science and Technology of China.
    }
  }
}
\author{Yan Tan\footnotemark[1] \thanks
  {
    School of Mathematical Sciences,
    Zhejiang University, Hangzhou, Zhejiang, 310058, China.
    (\email{qinghai@zju.edu.cn})
  }
  \and 
  Chenhao Ye\footnotemark[1] \footnotemark[2]
  \and
  Qinghai Zhang\footnotemark[1] \footnotemark[2]
  \and 
  Shubo Zhao\footnotemark[1] 
  \thanks
  {
    College of Mathematics and System Sciences,
    Xinjiang University,
    Urumqi, Xinjiang, 830046, China.
  }
}
\DeclareMathOperator{\range}{range}
\DeclareMathOperator{\dom}{dom}
\newcommand{\st}{\ \text{  s.t. }\ }
\newcommand{\calB}{\mathcal{B}}
\newcommand{\calC}{\mathcal{C}}
\newcommand{\bbN}{\mathbb{N}}
\newcommand{\bbR}{\mathbb{R}}
\begin{document}

\maketitle

\begin{abstract}
  The secant method, as an important approach for 
 solving nonlinear equations, 
 is introduced in nearly all numerical analysis textbooks. 
However, most textbooks only briefly address 
 the Q-order of convergence of this method, with few providing 
 rigorous mathematical proofs. 
This paper establishes a rigorous proof for 
 the Q-order of convergence of the secant method and 
 theoretically compares its computational 
 efficiency with that of Newton's method.
\end{abstract}

\begin{keywords}
  The secant method,
  Q-order of convergence
\end{keywords}

\begin{MSCcodes}
  41A25, 
  68W40 
\end{MSCcodes}

\section{Introduction}
\label{sec:intro}
Numerous well-established numerical methods exist for 
 finding roots of nonlinear equations $f(x)=0$. 
The widely used Newton's method employs the iterative scheme
\begin{equation*}
  \renewcommand{\arraystretch}{1.2}
  \begin{array}{ll}
    x_{n+1} = x_n - \frac{f(x_n)}{f'(x_n)}.
  \end{array}
\end{equation*}
This method can commence iterative computation directly 
 once an initial value $x_0$ is provided.
The secant method, a derivative-free variant of Newton's method, 
 replaces the derivative with a secant slope approximation 
 at the $n$th iteration
\[
  \renewcommand{\arraystretch}{1.2}
  \begin{array}{ll}
    f'(x_n) \leftarrow \frac{f(x_n) - f(x_{n-1})}{x_n - x_{n-1}},
  \end{array}
\]
 yielding the iterative scheme
\begin{equation}
    \label{eq:SecantFormula}
    \renewcommand{\arraystretch}{1.2}
    \begin{array}{ll}
        x_{n+1} = x_n - f(x_n)  \frac{x_n - x_{n-1}}{f(x_n) - f(x_{n-1})}.
    \end{array}
\end{equation}
Although this method requires two initial values, $x_0$ and $x_1$, 
 to initiate iterations, it remains widely applicable. 
This stems from two key advantages: 
 (i) enhanced robustness by eliminating derivative computations 
 compared to Newton's method, 
 and (ii) accelerated convergence relative to the bisection method.

Let $\alpha$ be a root of the equation $f(x)=0$. 
For the iterative sequence $\{x_n\}$, 
 we define the error sequence $\{e_n\}$ by $e_n = x_n - \alpha$. 
From a theoretical perspective, 
 we seek initial values $x_0, x_1$ such that 
 $\{x_n\}$ converges to $\alpha$. 
Furthermore, we wish to find the Q-order of convergence $p$ and 
 the corresponding asymptotic error constant $c$ for $\{x_n\}$, i.e., 
 to prove that $\lim\limits_{n\to\infty}\frac{E_{n+1}}{E_n^p}=c$, 
 where $E_n = |e_n|$, see Definition~\ref{def:p-orderConvergence}.

When $\alpha$ is a simple root, 
 although standard textbooks universally assert 
 Q-superlinear convergence of the secant method for simple roots,  
 rigorous proofs remain scarce. 
Two classical approaches exist: 
 asymptotic error analysis and the Fibonacci approach.  
D\'iez~\cite{diez2003note} and Kincaid et al.~\cite{kincaid2009numerical} 
 derived the quantitative relationship between 
 the Q-order of convergence of the secant method 
 and the golden ratio through asymptotic error analysis. 
However, this approach presupposes both 
 the convergence of the secant method 
 and sufficiently small errors at each step, 
 assumptions that may not hold without appropriate initial conditions.
Meanwhile, Atkinson~\cite{atkinson2008introduction} 
 and Gautschi~\cite{gautschi1997numerical} developed 
 the Fibonacci approach,
 which only estimates the R-order of convergence of the error,
 that is, the Q-order of convergence of this error upper bound,
 by tracking the error at each iteration.
While this approach clearly illustrates the error reduction process
 and the convergence of $\{x_n\}$,
 it does not directly address the Q-order of convergence of
 the actual error itself.
This distinction is crucial because the R-order of convergence of
 a sequence does not necessarily equal its Q-order of convergence,
 as demonstrated in the following example.

\begin{example}
  For $0 < k < 1$, define $y_n = k^{n + (-1)^n}$ and $z_n = k^{n-1}$. 
  Then for all  $n \geq 1$, we have $0 < y_n \leq z_n$, 
   so $\{z_n\}$ is an upper bound for $\{y_n\}$. 
  The sequence $\{z_n\}$ has Q-linear convergence since 
   $\frac{|z_{n+1}|}{|z_n|} = k$. 
  However, the ratio $\frac{|y_{n+1}|}{|y_n|}$ oscillates between $1$ 
   and $k^{2}$ and thus does not converge; 
   consequently, $\{y_n\}$ exhibits R-linear convergence
   but lacks Q-linear convergence.
\end{example}

When $\alpha$ is a root with multiplicity $m \ge 2$, 
 related research is even more limited. 
The classic approach remains the asymptotic error analysis 
 proposed by D\'iez~\cite{diez2003note}, 
 but it introduces further presuppositions. 
For $m = 2$, assuming the iterative sequence $\{x_n\}$ converges, 
 it proves Q-linear convergence 
 and finds the corresponding asymptotic error constant. 
For $m > 2$, it directly assumes that
 $\{x_n\}$ converges linearly to $\alpha$ 
 and then finds the corresponding asymptotic error constant. 
As mentioned in the analysis for simple roots, 
 these assumptions require appropriate initial conditions to hold. 
In fact, for some simple functions, 
 even with straightforward initial conditions, 
 the iteration may not converge, as shown in the following example.

\begin{example}\label{exm:xmNotConv}
  For the function $f(x)=x^{2}$, and $\alpha=0$
   is a $2$-fold root of $f$.
  The corresponding iterative formula for the secant method is 
   $x_{n+1}=x_{n} - \frac{x_n^2}{x_n + x_{n-1}}$.
  Let $k_n = \frac{e_{n+1}}{e_n} = \frac{x_{n+1}}{x_n}$, 
   the sequence $\{k_n\}$ satisfies the iterative formula 
   $k_{n}= 1 - \frac{k_{n-1}}{k_{n-1} + 1}$.
  If the initial ratio $k_0 = \tfrac{x_1}{x_0}$ equals the fixed point 
   $k^* := -\tfrac{1 + \sqrt{5}}{2}$, 
   then $k_n = k^*$ for all $n$, and thus $x_n = (k^*)^n x_0$. 
  Since $|k^*| > 1$, the sequence $\{x_n\}$ diverges, 
   despite the initial values $x_0, x_1$ being arbitrarily close 
   to the root $\alpha = 0$.  
\end{example}


Addressing the shortcomings in the current theory of the secant method, 
 we pose the following questions:
\begin{enumerate}
  \item[(Q-1)] 
    For simple roots, how can we rigorously prove the convergence of
     the secant iterative sequence $\{x_n\}$ near $\alpha$? 
    How can we rigorously compute the Q-order of convergence and 
     the corresponding asymptotic error constant for $\{x_n\}$?
  \item[(Q-2)] 
    For multiple roots, how can we provide 
     appropriate sufficient conditions such that 
     the secant iterative sequence $\{x_n\}$ near $\alpha$ converges? 
    Under such sufficient conditions, 
     how can we rigorously prove the Q-linear convergence of $\{x_n\}$ 
     and find the corresponding asymptotic error constant?
\end{enumerate}

In this paper, we provide positive answers to all above questions.

For (Q-1), Theorem~\ref{thm:secantConvergence} provides 
 a rigorous proof of Q-superlinear convergence.
It retains the advantage of explicit error tracking 
 while eliminating vague asymptotic analysis 
 and insufficient upper-bound estimates. 
By constructing the proof directly on the error terms 
 and leveraging properties of the Fibonacci sequence, 
 it rigorously establishes the convergence of $\{x_n\}$ 
 and explicitly derives the Q-order of convergence for the error itself.

For (Q-2), Section~\ref{sec:linearConvergenceMR} gives
  simple sufficient conditions 
 ($k_0 \in (0, 1) \cup (1, +\infty)$ 
 and $e_0 > 0$ is sufficiently small) to fully prove that 
 the iterative sequence $\{x_n\}$ converges linearly to $\alpha$
 and to determine the corresponding asymptotic error constant. 
Furthermore, in Section~\ref{sec:sufficientConditionsMR}, 
 we attempt to relax the sufficient conditions on the initial values 
 under which $\{x_n\}$ still converges linearly.
Theorem~\ref{thm:knMoreRangeOdd} and Theorem~\ref{thm:fx2mCommConvergence}
 provide sufficient (but not necessary) conditions 
 on the initial values $k_0$ and $e_0$ 
 for the Q-linear convergence of the iterative sequence $\{x_n\}$
 when $m$ is odd and even, respectively.

Thus, the main contributions of this work are:
\begin{itemize}
  \item 
    Providing rigorous proofs for the Q-order of convergence 
     and the existence of the corresponding asymptotic error constants 
     for $\{x_n\}$ in both simple and multiple root cases, 
     addressing the gaps in existing textbooks and literature.
  \item 
    For the multiple root case, 
     supplementing appropriate sufficient conditions on the initial values,
     serving as theoretical guidance for 
     selecting initial values in the secant method. 
    Practically, these conditions help 
     avoid choosing inappropriate initial values that 
     would lead to a divergent iterative sequence $\{x_n\}$.
\end{itemize}

The rest of this paper is structured as follows.
Section~\ref{sec:preliminary} gives some fundamental lemmas 
 and relevant notation;
Section~\ref{sec:analysis} presents the convergence analysis 
 for simple roots, including a comparative analysis of computation time 
 with Newton's method, fully addressing (Q-1);
Section~\ref{sec:analysisMR} provides the convergence analysis 
 for multiple roots, covering the Q-linear convergence of 
 the iterative sequence near multiple roots 
 and sufficient conditions on the initial values 
 to ensure convergence of the iterative sequence, 
 thereby addressing all aspects of (Q-2);
Section~\ref{sec:conclusion} concludes the paper.





\section{Preliminaries and notation}
\label{sec:preliminary}
In this section,
we introduce some fundamental concepts to be used later.

\subsection{Q-order of convergence}
\label{sec:quot-conv-asympt}

A sequence $\{x_n\}$ is said to \emph{converge} to $L$ if
\begin{equation*}
  \forall \epsilon >0,\ \exists N\in \bbN, \ \text{ s.t. } \
  \forall n> N,\ |x_n - L| \leq \epsilon,
\end{equation*}
 denoted by $\lim\limits_{n \to \infty} x_n = L$.
The speed of convergence of $\{x_n\}$ to $L$
 is measured by

\begin{definition}[Q-order of convergence]
  \label{def:p-orderConvergence}
  A convergent sequence $\{x_n\}$ is said to \emph{converge} to $L$ 
   with \emph{Q-order} $p$ $(p \geq 1)$ if 
  \begin{equation*}
    \renewcommand{\arraystretch}{1.2}
    \begin{array}{ll}
      \lim\limits_{n \to \infty} 
      \frac{|x_{n+1} - L|}{|x_{n} - L|^p} = c > 0,
    \end{array}
  \end{equation*}
    where the constant $c$ is called the 
    \emph{asymptotic error constant (AEC)}.
   In particular, $\{x_n\}$ has \emph{Q-linear convergence}
    if $p = 1$ and $0 < c < 1$, 
    \emph{Q-superlinear convergence} if $1 < p < 2$,
    and \emph{Q-quadratic convergence} if $p = 2$.
\end{definition}

Throughout this paper, the term converges linearly is used 
 to mean Q-linear convergence, as no ambiguity arises in the present context.

\subsection{Fibonacci numbers}
\label{sec:fibonacci-numbers}

The following definition introduces Fibonacci numbers, 
 which are closely related to the Q-superlinear convergence of 
 the secant method.

\begin{definition}
  \label{def:Fibonacci}
  The sequence $\{F_n\}$ of \emph{Fibonacci numbers} is defined as
  \begin{equation*}
    F_0 = 0,\quad F_1 = 1, \qquad  F_{n+1} = F_n + F_{n-1}.
  \end{equation*}
\end{definition}

By analyzing the recurrence relation, 
 we derive its closed-form expression.

\begin{theorem}[Binet's formula]
  \label{thm:BinetFormula}
  Denote the golden ratio by $r_0 := \frac{1 + \sqrt{5}}{2}$
   and let $r_1 := 1 - r_0 = \frac{1 - \sqrt{5}}{2}$. 
  Then
  \begin{equation}
    \label{eq:BinetFormula}
    \renewcommand{\arraystretch}{1.2}
    \begin{array}{ll}
      F_n = \frac{r_0^n - r_1^n}{\sqrt{5}}.
    \end{array}
  \end{equation}
\end{theorem}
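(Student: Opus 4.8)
The plan is to prove Binet's formula by strong induction on $n$, exploiting the fact that the golden ratio $r_0$ and its conjugate $r_1$ are precisely the two roots of the characteristic polynomial $x^2 - x - 1$ associated with the Fibonacci recurrence. First I would record the elementary algebraic facts about $r_0$ and $r_1$: from the definitions one has $r_0 + r_1 = 1$, $r_0 r_1 = -1$, and $r_0 - r_1 = \sqrt{5}$, and in particular $r_1 = 1 - r_0 = \frac{1-\sqrt 5}{2}$ is indeed the second root of $x^2 - x - 1 = 0$. Hence $r_i^2 = r_i + 1$ for each $i \in \{0,1\}$, and multiplying through by $r_i^{\,n-1}$ gives $r_i^{\,n+1} = r_i^{\,n} + r_i^{\,n-1}$ for every $n \ge 1$.

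Next I would introduce the auxiliary sequence $G_n := \frac{r_0^{\,n} - r_1^{\,n}}{\sqrt 5}$ and show it obeys the same recurrence and the same initial data as $\{F_n\}$. For the base cases, $G_0 = \frac{1-1}{\sqrt 5} = 0 = F_0$ and $G_1 = \frac{r_0 - r_1}{\sqrt 5} = \frac{\sqrt 5}{\sqrt 5} = 1 = F_1$. For the inductive step, subtracting the two identities $r_0^{\,n+1} = r_0^{\,n} + r_0^{\,n-1}$ and $r_1^{\,n+1} = r_1^{\,n} + r_1^{\,n-1}$ and dividing by $\sqrt 5$ yields $G_{n+1} = G_n + G_{n-1}$. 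Since Definition~\ref{def:Fibonacci} determines $\{F_n\}$ uniquely from $F_0 = 0$, $F_1 = 1$, and the two-term recurrence, strong induction then gives $F_n = G_n$ for all $n \ge 0$, which is exactly \eqref{eq:BinetFormula}.

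There is no genuine obstacle here; the argument is routine once one observes that $r_0$ and $r_1$ solve $x^2 = x+1$. The only places that warrant a little care are checking both base cases $n = 0$ and $n = 1$ (a single off-by-one slip would break the induction) and confirming that the $r_1$ of the statement coincides with the second characteristic root — both immediate. As an alternative one could bypass induction entirely by invoking the standard theory of linear recurrences: the general solution of $F_{n+1} = F_n + F_{n-1}$ is $A r_0^{\,n} + B r_1^{\,n}$, and solving the $2\times 2$ system coming from $F_0 = 0$ and $F_1 = 1$ gives $A = -B = \frac{1}{\sqrt 5}$; I would mention this as a remark but present the induction as the main proof since it is self-contained and needs no external machinery.
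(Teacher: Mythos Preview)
Your argument is correct and is the standard induction proof of Binet's formula. Note, however, that the paper does not actually supply a proof of Theorem~\ref{thm:BinetFormula}: it is stated as a classical result and used without justification, with only the short Corollary~\ref{cor:FibonacciIteration} proved from it. So there is nothing in the paper to compare against; your self-contained induction (or the linear-recurrence alternative you mention) would serve perfectly well if a proof were to be inserted.
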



\begin{corollary}
  \label{cor:FibonacciIteration}
  The Fibonacci sequence in Definition~\ref{def:Fibonacci} satisfies
  \begin{equation*}
    F_{n+1} = r_0 F_n + r^n_1.
  \end{equation*}
\end{corollary}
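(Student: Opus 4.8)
The plan is to derive the identity directly from Binet's formula (Theorem~\ref{thm:BinetFormula}), which is already available. First I would record the two elementary facts about the golden ratio that do the work: since $r_0$ and $r_1$ are the two roots of $x^2 = x + 1$, we have $r_0 + r_1 = 1$ and $r_0 r_1 = -1$; combining these with the explicit values $r_0 = \frac{1+\sqrt 5}{2}$ and $r_1 = \frac{1-\sqrt 5}{2}$ gives $r_0 - r_1 = \sqrt 5$.

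Next I would substitute Binet's formula into the difference $F_{n+1} - r_0 F_n$ and simplify. Writing $F_{n+1} = \frac{r_0^{n+1} - r_1^{n+1}}{\sqrt 5}$ and $r_0 F_n = \frac{r_0^{n+1} - r_0 r_1^{n}}{\sqrt 5}$, the $r_0^{n+1}$ terms cancel, leaving $F_{n+1} - r_0 F_n = \frac{r_1^{n}(r_0 - r_1)}{\sqrt 5} = \frac{\sqrt 5\, r_1^{n}}{\sqrt 5} = r_1^{n}$, which is exactly the claim. Rearranging yields $F_{n+1} = r_0 F_n + r_1^{n}$.

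There is essentially no obstacle here; the only point requiring a moment's care is the identification $r_0 - r_1 = \sqrt 5$ (equivalently $\sqrt 5 - r_0 = -r_1$), which is immediate from the definitions. A sanity check at $n=0$ ($F_1 = r_0 F_0 + r_1^{0} = 1$) and $n=1$ ($F_2 = r_0 F_1 + r_1 = r_0 + r_1 = 1$) confirms the formula.

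Alternatively, if one wishes to avoid invoking Binet's formula, the same identity follows by a short induction on $n$: setting $G_n := F_{n+1} - r_0 F_n$, one has $G_0 = F_1 - r_0 F_0 = 1$, and using $F_{n+1} = F_n + F_{n-1}$ together with $1 - r_0 = r_1$ and $r_0 r_1 = -1$ one verifies $G_n = r_1 G_{n-1}$, so that $G_n = r_1^{n}$. I would present the Binet-based argument as the main proof, since Theorem~\ref{thm:BinetFormula} is stated immediately above and makes the computation a one-liner.
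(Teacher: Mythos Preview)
Your proposal is correct and takes essentially the same approach as the paper: the paper's proof is the one-liner ``This follows from $r_1 = r_0 - \sqrt{5}$ and \eqref{eq:BinetFormula},'' which is precisely your Binet-based computation using $r_0 - r_1 = \sqrt{5}$. Your additional remarks (the sanity checks and the alternative induction via $G_n := F_{n+1} - r_0 F_n$) are fine but go beyond what the paper records.
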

\begin{proof}
  This follows from $r_1 = r_0 - \sqrt{5}$ and (\ref{eq:BinetFormula}).
\end{proof}

\subsection{Approximating rational functions}
\label{sec:appr-rati-funct}

We first introduce some necessary notation and preliminary results.

\begin{notation}[Range notation]
  \label{ntn:range}
  For a finite sequence $\{a_{i}\}_{i=1}^{n}$, define
  \begin{align*}
    \range(a_1, a_2, \ldots, a_n) &:= (\min(a_1, \ldots, a_n), \, 
                                       \max(a_1, \ldots, a_n)),\\
    \range[a_1, a_2, \ldots, a_n] &:= [\min(a_1, \ldots, a_n), \, 
                                       \max(a_1, \ldots, a_n)].
  \end{align*}
\end{notation}

\begin{notation}[Big-O notation]
  \label{ntn:highOrder}
  Let $f(x)$ and $g(x)$ be functions defined in a neighborhood 
   of a point $x_0$.
  We write $f(x) = O\left(g(x)\right)$ as $x \to x_0$ if there exist
   constants $C > 0$ and $\delta > 0$ such that for all $x$ with
   $|x - x_0| < \delta$, $|f(x)| \le C\,|g(x)|$.
  When no confusion arises, we omit "as $x \to x_0$" and simply write
   $f(x) = O\left(g(x)\right)$.
\end{notation}

\begin{theorem}[Taylor's theorem]
  \label{thm:taylor}
  Suppose that $f$ has continuous derivatives up to order $n+1$ 
   on an interval containing $a$. 
  Then, for any $x$ in this interval,
  \[
    \renewcommand{\arraystretch}{1.2}
    \begin{array}{ll} 
      f(x) = f(a) + f'(a)(x - a) + \frac{f''(a)}{2!}(x - a)^2 + \cdots 
             + \frac{f^{(n)}(a)}{n!}(x - a)^n + R_n(x),
    \end{array}
  \]
  where $f^{(k)}(a)$ denotes the $k$th derivative of $f$ at $a$, 
   $(x - a)^k$ is the $k$th power centered at $a$, 
   and for some $\xi \in \range(a, x)$,
  \[
    \renewcommand{\arraystretch}{1.2}
    \begin{array}{ll} 
      R_n(x) := \frac{f^{(n+1)}(\xi)}{(n+1)!}(x - a)^{n+1}.
    \end{array}
  \]
  Equivalently, $R_n(x) = O\left((x-a)^{n+1}\right)$ as $x \to a$.
\end{theorem}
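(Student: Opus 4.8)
The plan is to reduce the identity to a single application of Rolle's theorem via a judiciously chosen auxiliary function, in the classical manner, and then read off the big-O statement from the continuity of $f^{(n+1)}$. First I would dispose of the trivial case $x = a$: every power $(x-a)^k$ with $k \ge 1$ vanishes, $f(x)$ coincides with $f(a)$, so $R_n(a) = 0$ and the claimed identity holds with any choice of $\xi$. Henceforth fix $x \ne a$ in the given interval and treat it as a constant.

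Next I would introduce, for $t$ ranging over $\range[a,x]$, the auxiliary function
\[
  g(t) := f(x) - \sum_{k=0}^{n} \frac{f^{(k)}(t)}{k!}(x-t)^k - M\,(x-t)^{n+1},
\]
where the constant $M$ is defined by the requirement $g(a) = 0$. Since the statement defines $R_n(x) = f(x) - \sum_{k=0}^{n}\frac{f^{(k)}(a)}{k!}(x-a)^k$, the condition $g(a)=0$ says precisely $R_n(x) = M\,(x-a)^{n+1}$. By construction $g(x)=0$ as well, because for $t=x$ every term of the sum with $k\ge 1$ carries a vanishing factor $(x-t)$ and the last term vanishes too, while the $k=0$ term is $f(x)$ and cancels the leading $f(x)$. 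The hypothesis that $f$ has continuous derivatives up to order $n+1$ on the interval guarantees $g$ is continuous on $\range[a,x]$ and differentiable on its interior, so Rolle's theorem supplies a point $\xi \in \range(a,x)$ with $g'(\xi)=0$.

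The one substantive computation is that $g'(t)$ telescopes. Differentiating $\frac{f^{(k)}(t)}{k!}(x-t)^k$ by the product rule gives $\frac{f^{(k+1)}(t)}{k!}(x-t)^k - \frac{f^{(k)}(t)}{(k-1)!}(x-t)^{k-1}$ for $k\ge 1$ (and just $f'(t)$ for $k=0$); summing over $k$ from $0$ to $n$, consecutive terms cancel in pairs and only $\frac{f^{(n+1)}(t)}{n!}(x-t)^n$ survives. Adding the derivative $-(n+1)M(x-t)^n$ of the last term with the correct sign, one obtains
\[
  g'(t) = -\frac{f^{(n+1)}(t)}{n!}(x-t)^n + (n+1)\,M\,(x-t)^n.
\]
Evaluating at $t=\xi$, using $g'(\xi)=0$ and $x-\xi\ne 0$, yields $M = \frac{f^{(n+1)}(\xi)}{(n+1)!}$, and substituting into $R_n(x) = M\,(x-a)^{n+1}$ gives the Lagrange form. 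Finally, the big-O claim follows immediately: $f^{(n+1)}$ is continuous near $a$, hence bounded there by some constant $C$, so $|R_n(x)| \le \frac{C}{(n+1)!}|x-a|^{n+1}$ for $x$ in a neighborhood of $a$, i.e.\ $R_n(x) = O((x-a)^{n+1})$ as $x\to a$.

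I do not expect a genuine obstacle here; the only non-mechanical step is hitting upon the auxiliary function $g$ and the constant $M$, after which the telescoping of $g'$ and the bound on $R_n$ are routine. The single point of care is to confirm that the stated regularity hypothesis is exactly what is needed for $g$ to be differentiable on the open interval and for $f^{(n+1)}$ to be locally bounded, which it is.
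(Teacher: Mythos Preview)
Your proof is correct and follows the classical auxiliary-function route: choose $M$ so that $g(a)=0$, observe $g(x)=0$, apply Rolle's theorem, and exploit the telescoping of $g'$. The computation and the subsequent big-$O$ bound are both fine.

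There is nothing to compare against, however: the paper states Taylor's theorem as a preliminary result in Section~\ref{sec:appr-rati-funct} and does not supply a proof. Your argument would serve perfectly well if one were desired.
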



\begin{theorem}[Lagrange's mean value theorem]
  \label{thm:meanValue}
  If a function $f \in \calC[a, b]$ and $f'$ exists on $(a, b)$, then
  \begin{displaymath}
    \renewcommand{\arraystretch}{1.2}
    \begin{array}{ll} 
      \exists \xi \in (a, b) \  \text{ s.t. } \quad
      f'(\xi) = \frac{f(b) - f(a)}{b - a}. 
    \end{array}
  \end{displaymath}
\end{theorem}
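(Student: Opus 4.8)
The plan is to reduce the claim to Rolle's theorem by subtracting off the chord through the endpoints, and then to prove Rolle's theorem from the extreme value theorem together with the interior-extremum (Fermat) argument. So the first task is to record what Rolle's theorem asserts: if $g \in \calC[a,b]$, $g'$ exists on $(a,b)$, and $g(a) = g(b)$, then $g'(\xi) = 0$ for some $\xi \in (a,b)$.

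To prove Rolle's theorem, I would invoke the extreme value theorem: since $g$ is continuous on the compact interval $[a,b]$, it attains a maximum value $M$ and a minimum value $m$ on $[a,b]$. If $M = m$ then $g$ is constant, $g' \equiv 0$ on $(a,b)$, and any point works. Otherwise $M > m$, and since $g(a) = g(b)$, at least one of the values $M$, $m$ must be attained at some interior point $\xi \in (a,b)$. At such a point I form the one-sided difference quotients $\frac{g(\xi + h) - g(\xi)}{h}$: if $\xi$ is an interior maximizer, the numerator is $\le 0$, so the quotient is $\le 0$ for $h > 0$ and $\ge 0$ for $h < 0$; letting $h \to 0^+$ and $h \to 0^-$ and using that $g'(\xi)$ exists (so both one-sided limits equal $g'(\xi)$) forces $g'(\xi) \le 0$ and $g'(\xi) \ge 0$, hence $g'(\xi) = 0$. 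The case of an interior minimizer is symmetric.

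With Rolle's theorem in hand, I would apply it to the auxiliary function $g(x) := f(x) - f(a) - \frac{f(b) - f(a)}{b - a}(x - a)$. Then $g \in \calC[a,b]$ as a difference of a continuous function and a polynomial; $g$ is differentiable on $(a,b)$ with $g'(x) = f'(x) - \frac{f(b) - f(a)}{b - a}$; and a direct substitution gives $g(a) = 0 = g(b)$. Rolle's theorem then yields $\xi \in (a,b)$ with $g'(\xi) = 0$, that is, $f'(\xi) = \frac{f(b) - f(a)}{b - a}$, which is exactly the conclusion.

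The reduction step is a routine computation and not the real content; the genuine obstacle lies inside Rolle's theorem, namely the two analytic inputs --- the extreme value theorem (which rests on the completeness of $\bbR$ and compactness of $[a,b]$) and the sign analysis of one-sided difference quotients at an interior extremum. For a numerical-analysis exposition it is standard to cite these as known facts rather than re-derive them, so the ``hard part'' is really just the decision of how much of that foundational material to include.
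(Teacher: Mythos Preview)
Your proof is correct and follows the standard textbook route: subtract the secant line to reduce to Rolle's theorem, then prove Rolle from the extreme value theorem and Fermat's interior-extremum argument. There is nothing to compare against, however: the paper states Lagrange's mean value theorem as a preliminary result without proof, treating it as background from elementary analysis. Your write-up is more than the paper itself provides.
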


\begin{corollary}
  \label{coro:reverseRoll}
  Consider a function $f(x) \in \calC^1[a, b]$, $a \neq b$,
   and for any $x\in(a,b)$, $f'(x) \neq 0$.
  Then $f$ is an injective function,
   that is, for any $y, z \in [a, b] $, if $y \neq z $,
   then $f(y) \neq f(z)$.
\end{corollary}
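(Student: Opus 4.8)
The plan is to argue directly from the mean value theorem, mirroring the way Rolle's theorem would force a vanishing derivative between two points with equal values. First I would take $y, z \in [a, b]$ with $y \neq z$ and, without loss of generality, assume $y < z$. Since $[y, z] \subseteq [a, b]$ and $f \in \calC^1[a, b]$, the function $f$ is continuous on $[y, z]$ and differentiable on $(y, z)$, so Theorem~\ref{thm:meanValue} applies on $[y, z]$ and produces some $\xi \in (y, z)$ with $f'(\xi) = \frac{f(z) - f(y)}{z - y}$.

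The next step is to verify that $\xi$ lies in the open interval $(a, b)$, so that the hypothesis $f'(\xi) \neq 0$ is applicable. This is the only point that requires a moment's care: even when $y = a$ or $z = b$, one still has $(y, z) \subseteq (a, b)$, because $a \le y < z \le b$ forces every point strictly between $y$ and $z$ to be strictly between $a$ and $b$. Hence $f'(\xi) \neq 0$, and since $z - y \neq 0$, we get $f(z) - f(y) = f'(\xi)(z - y) \neq 0$, i.e.\ $f(y) \neq f(z)$. As $y, z$ were an arbitrary pair of distinct points of $[a, b]$, this shows $f$ is injective.

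I do not expect any genuine obstacle here; the endpoint bookkeeping in the previous paragraph is the subtlest part. If a more structural argument is preferred, an alternative is to observe that $f'$ is continuous and nonvanishing on the connected set $(a, b)$, so by the intermediate value theorem $f'$ has constant sign there, whence $f$ is strictly monotone on $[a, b]$ and therefore injective; but the mean value theorem argument above is shorter and avoids invoking monotonicity, so that is the route I would write up.
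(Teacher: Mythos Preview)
Your argument is correct and matches the paper's proof almost exactly: both apply the mean value theorem on $[y,z]$ to obtain a point $\xi$ (the paper calls it $w$) in $(y,z)\subseteq(a,b)$ where $f'(\xi)\neq 0$, and then conclude $f(y)\neq f(z)$. The only cosmetic difference is that the paper phrases it as a proof by contradiction, whereas you argue directly and are slightly more explicit about why $(y,z)\subseteq(a,b)$ at the endpoints.
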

\begin{proof}
  Assume for the sake of contradiction that $f(y) = f(z)$.
  By Theorem \ref{thm:meanValue},
   there exists
   $w \in \range(y, z) \subseteq (a, b)$ such that
  \[0 = f(y) - f(z) = f'(w)(y - z).\]
   Since $y \neq z$, we obtain $f'(w) = 0$,
   which contradicts $f'(x) \neq 0$ on $(a, b)$.
\end{proof}

\begin{theorem}[Cauchy's mean value theorem]
  \label{thm:cauchyMid}
  If functions $f, g \in \calC[a, b]$ and $f', g'$ exist on $(a, b)$, 
   and satisfy that for any $x \in (a, b) $, $g'(x) \neq 0$.
  Then there exists a point $ \xi \in (a, b) $ such that
  \[ 
    \renewcommand{\arraystretch}{1.2}
    \begin{array}{ll} 
      \frac{f(b) - f(a)}{g(b) - g(a)} = \frac{f'(\xi)}{g'(\xi)}. 
    \end{array}
  \]
\end{theorem}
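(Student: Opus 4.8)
The plan is to reduce the statement to Lagrange's mean value theorem (Theorem~\ref{thm:meanValue}) applied to a suitable auxiliary function. The first step is to check that the left-hand quotient is even well defined: since $g \in \calC[a,b]$ and $g'(x) \neq 0$ for all $x \in (a,b)$, Corollary~\ref{coro:reverseRoll} shows that $g$ is injective on $[a,b]$, so $g(a) \neq g(b)$ and the denominator $g(b) - g(a)$ is nonzero.

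Next I would introduce the auxiliary function
\[
  h(x) := \bigl(f(b) - f(a)\bigr)\bigl(g(x) - g(a)\bigr) - \bigl(g(b) - g(a)\bigr)\bigl(f(x) - f(a)\bigr),
\]
written in this symmetric form precisely so that no division is needed until the very end. A direct substitution gives $h(a) = 0$ and $h(b) = \bigl(f(b)-f(a)\bigr)\bigl(g(b)-g(a)\bigr) - \bigl(g(b)-g(a)\bigr)\bigl(f(b)-f(a)\bigr) = 0$. Since $f, g \in \calC[a,b]$ and both are differentiable on $(a,b)$, so is $h$. Applying Theorem~\ref{thm:meanValue} to $h$ on $[a,b]$ produces a point $\xi \in (a,b)$ with
\[
  h'(\xi) = \frac{h(b) - h(a)}{b - a} = 0, \qquad \text{i.e.,} \qquad \bigl(f(b)-f(a)\bigr)g'(\xi) - \bigl(g(b)-g(a)\bigr)f'(\xi) = 0.
\]

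Finally, I would divide this identity by the product $g'(\xi)\bigl(g(b)-g(a)\bigr)$, which is nonzero because $g'(\xi) \neq 0$ by hypothesis and $g(b) - g(a) \neq 0$ by the first step, to obtain $\frac{f(b)-f(a)}{g(b)-g(a)} = \frac{f'(\xi)}{g'(\xi)}$, as claimed. There is no substantial obstacle in this argument; the only point genuinely requiring care is ensuring that $g(b) - g(a)$ cannot vanish — supplied by Corollary~\ref{coro:reverseRoll} — and choosing the symmetric form of $h$ so that the whole proof rests on a single application of Lagrange's mean value theorem rather than needing a separate statement of Rolle's theorem.
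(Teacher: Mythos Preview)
Your argument is correct and is the standard textbook proof of Cauchy's mean value theorem. Note, however, that the paper does not actually supply a proof of this statement: Theorem~\ref{thm:cauchyMid} is listed among the preliminaries (alongside Taylor's theorem and Lagrange's mean value theorem) and is stated without proof, as a classical result to be invoked later. So there is no ``paper's own proof'' to compare against; your write-up simply fills in what the authors take for granted, and it does so cleanly, even using the paper's Corollary~\ref{coro:reverseRoll} to justify $g(b)\neq g(a)$.
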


\begin{lemma}
  \label{lem:epsLimit}
  Let functions $A(s): \mathcal{S} \to \bbR$ and
   $B(t): \mathcal{T} \to \bbR$ satisfy the following conditions:
  \begin{enumerate}
    \item $\exists L>0 \st \forall t \in \mathcal{T},\ |B(t)| \ge L$;
    \item $\exists M > 0 \st \forall s \in \mathcal{S}$,
          $\forall t \in \mathcal{T},\ \left|\frac{A(s)}{B(t)}\right| \le M$. 
  \end{enumerate}
  Then  
  \[ 
    \renewcommand{\arraystretch}{1.2}
    \begin{array}{ll} 
      \forall \epsilon > 0, \exists \delta > 0 \st  
      \forall |x|, |y| < \delta, \forall s \in \mathcal{S},
      \forall t \in \mathcal{T},
      \quad
      \left| \frac{A(s) + O(x)}{B(t) + O(y)} - \frac{A(s)}{B(t)} \right| 
    < \epsilon.
    \end{array}
  \]
\end{lemma}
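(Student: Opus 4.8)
The plan is to combine the two fractions over a common denominator and estimate the result. First I would unpack the big-$O$ symbols via Notation~\ref{ntn:highOrder}: write $A(s) + O(x) = A(s) + u$ and $B(t) + O(y) = B(t) + v$, where $|u| \le C_1|x|$ and $|v| \le C_2|y|$ whenever $|x|, |y| < \delta_0$, for fixed constants $C_1, C_2 > 0$ and $\delta_0 > 0$ that do not depend on $s$ or $t$ (we may enlarge $C_1, C_2$ so that both are positive). A direct computation gives
\[
  \frac{A(s) + u}{B(t) + v} - \frac{A(s)}{B(t)}
  = \frac{u\,B(t) - v\,A(s)}{B(t)\,\bigl(B(t) + v\bigr)}.
\]

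Next I would bound the denominator from below and the numerator from above, keeping $|B(t)|$ as an explicit factor. Imposing the restriction $C_2|y| \le L/2$ forces $|v| \le L/2 \le |B(t)|/2$ by hypothesis~(1), hence $|B(t) + v| \ge |B(t)|/2$ and so $\bigl|B(t)(B(t)+v)\bigr| \ge |B(t)|^2/2$. For the numerator, hypothesis~(2) gives $|A(s)| \le M\,|B(t)|$, whence
\[
  \bigl|u\,B(t) - v\,A(s)\bigr| \le |u|\,|B(t)| + |v|\,M\,|B(t)|
  \le \bigl(C_1|x| + M C_2|y|\bigr)\,|B(t)|.
\]
The one point that requires attention — and the only real obstacle — is that hypothesis~(1) provides merely a \emph{lower} bound on $|B(t)|$ (and none at all, directly, on $|A(s)|$), so one must not attempt to bound $|B(t)|$ or $|A(s)|$ individually; instead the common factor $|B(t)|$ must be retained and cancelled between numerator and denominator.

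Dividing the two estimates, the difference is at most
\[
  \frac{2\bigl(C_1|x| + M C_2|y|\bigr)}{|B(t)|}
  \le \frac{2\,(C_1 + M C_2)}{L}\,\max\{|x|, |y|\},
\]
which is uniform in $s \in \mathcal{S}$ and $t \in \mathcal{T}$. Finally, given $\epsilon > 0$, I would set
\[
  \delta := \min\left\{ \delta_0,\ \frac{L}{2C_2},\
  \frac{\epsilon L}{2\,(C_1 + M C_2)} \right\} > 0;
\]
then $|x|, |y| < \delta$ makes every restriction used above valid and forces the right-hand side strictly below $\epsilon$, which is exactly the claim. Beyond the observation about cancelling $|B(t)|$, the argument is only routine bookkeeping of constants.
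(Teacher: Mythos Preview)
Your proof is correct and follows essentially the same approach as the paper: combine the two fractions over a common denominator, bound the perturbed denominator below by $|B(t)|/2$, bound the numerator by a multiple of $|B(t)|$, and cancel the common factor so that only the uniform lower bound $L$ remains. The only cosmetic difference is that you invoke $|A(s)|\le M\,|B(t)|$ immediately in the numerator estimate, whereas the paper first writes the numerator as $(|A(s)|+|B(t)|)(M_x+M_y)\delta$ and applies the ratio bound $|A(s)/B(t)|\le M$ at the final step; both arrive at the same $\delta$ up to harmless constants.
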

\begin{proof}
  The definitions of $O(x) $ and $ O(y)$ yield
  \begin{align*}
      &\exists \delta_x > 0, M_x > 0 \st \forall |x| < \delta_x,\ 
       |O(x)| \le M_x |x|,\\
      &\exists \delta_y > 0, M_y > 0 \st \forall |y| < \delta_y,\ 
       |O(y)| \le M_y |y|.
  \end{align*}
  Combined with $|B(t)| \ge L > 0$ for all $t \in \mathcal{T}$, 
   we have
  \[
    \renewcommand{\arraystretch}{1.2}
    \begin{array}{ll} 
      \forall t\in\mathcal{T},\ 
      \forall |y|<\delta_0, \quad 
      |O(y)| \le M_y |y| < \frac{L}{2} \le \frac{|B(t)|}{2},
    \end{array}
  \] 
   where $\delta_0 := \min\left(\delta_y, \frac{L}{2M_y}\right)$.
  Hence the triangle inequality shows
  \[
    \renewcommand{\arraystretch}{1.2}
    \begin{array}{ll} 
      \forall t\in\mathcal{T},\ 
      \forall |y|<\delta_0,\quad 
      |B(t)+O(y)|\ge |B(t)|-|O(y)|>|B(t)|-\frac{|B(t)|}{2}=\frac{|B(t)|}{2}.
    \end{array}
  \]
  For any $\epsilon>0$, let 
   $\delta_1 := \frac{L\epsilon}{2(M + 1)(M_x + M_y)}$ and
   $\delta := \min\left(\delta_x, \delta_y, \delta_0, \delta_1\right)>0$. 
  Then for any $|x|, |y| < \delta$, $s \in \mathcal{S}$,
   and $t \in \mathcal{T}$, by the triangle inequality,
  \begin{displaymath}
    \renewcommand{\arraystretch}{1.2}
    \begin{array}{ll}
      &|B(t)O(x) - A(s)O(y)|
       \le |B(t)||O(x)|+|A(s)||O(y)|
      \\&\hspace{3.6cm}
       \le |B(t)|\cdot M_x|x|+|A(s)|\cdot M_y|y|
      \\&\hspace{3.6cm}
       < (|B(t)| M_x+|A(s)| M_y)\delta
      \\&\hspace{3.6cm}
       < (|A(s)|+|B(t)|)(M_x+M_y)\delta.
    \end{array}
  \end{displaymath}
  Therefore, it holds that
  \begin{displaymath}
    \renewcommand{\arraystretch}{2}
    \begin{array}{ll}
      &\left|\frac{A(s) + O(x)}{B(t) + O(y)} - \frac{A(s)}{B(t)}\right| 
       = \left|\frac{B(t)O(x) - A(s)O(y)}{B(t)(B(t)+O(y))}\right| 
       < \frac{(|A(s)|+|B(t)|)(M_x+M_y)\delta}{\frac{1}{2}|B(t)||B(t)|} 
      \\&\hspace{6.22cm}
       = \frac{2}{|B(t)|}\left(\left|\frac{A(s)}{B(t)}\right| + 1\right)
         (M_x + M_y)\delta,
    \end{array}
  \end{displaymath}
   which, together with 
   $\delta \leq \delta_1 = \frac{L\epsilon}{2(M + 1)(M_x + M_y)}$,
   implies
   $\left|\frac{A(s) + O(x)}{B(t) + O(y)} - \frac{A(s)}{B(t)}\right| 
    < \epsilon$.
\end{proof}

\begin{lemma}
  \label{lem:characterRoots0}
  For $m \geq 2$, the characteristic equation
  \begin{equation}
    \label{eq:characterEqm0}
    p_m(k) := k^{m} + k^{m-1} - 1 = 0
  \end{equation}
   has a unique solution $c_{m, 0}$ in $(0, 1)$.
\end{lemma}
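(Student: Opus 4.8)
The plan is to get existence from the intermediate value theorem and uniqueness from strict monotonicity of $p_m$ on $[0,1]$.

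First I would observe that $p_m$ is a polynomial, hence continuous on $[0,1]$, and evaluate it at the endpoints: $p_m(0) = -1 < 0$, while $p_m(1) = 1 + 1 - 1 = 1 > 0$. By the intermediate value theorem there is therefore at least one point $c_{m,0} \in (0,1)$ with $p_m(c_{m,0}) = 0$.

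Next, for uniqueness, I would differentiate and factor: $p_m'(k) = m k^{m-1} + (m-1) k^{m-2} = k^{m-2}\bigl(m k + (m-1)\bigr)$. For every $k \in (0,1)$ and every $m \ge 2$ both factors are strictly positive (the factor $k^{m-2}$ equals $1$ when $m = 2$ and is positive for $m \ge 3$, and $mk + (m-1) > 0$ since $k > 0$ and $m - 1 \ge 1$), so $p_m'(k) > 0$ on $(0,1)$. Thus $p_m$ is strictly increasing on $[0,1]$ and can vanish at most once; together with the existence step this shows $c_{m,0}$ is the unique root in $(0,1)$.

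I do not expect any genuine obstacle here; the only point needing a word of care is the boundary case $m = 2$, where the exponent $m-2$ vanishes and the factorization of $p_m'$ degenerates to $p_2'(k) = 2k+1$, which is nonetheless positive on $(0,1)$, so the monotonicity argument applies uniformly in $m \ge 2$.
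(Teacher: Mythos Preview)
Your proposal is correct and essentially identical to the paper's proof: both use the intermediate value theorem on the sign change $p_m(0)=-1<0<1=p_m(1)$ for existence, and the factorization $p_m'(k)=k^{m-2}(mk+m-1)>0$ on $(0,1)$ for uniqueness via strict monotonicity. The paper does not separately address the $m=2$ boundary case, so your extra remark there is a minor refinement.
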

\begin{proof}
  Since $-1 = p_m(0) < 0 < p_m(1) = 1$,
   the intermediate value theorem shows that
   \eqref{eq:characterEqm0} has a solution $c_{m,0}$ in $(0,1)$.
  For $k \in (0, 1)$ and $m \geq 2$, we have 
   $p_m'(k) = k^{m-2}(mk + m - 1) > 0$, 
   thus $p_m(k)$ is strictly monotonically increasing on $(0, 1)$,
   which ensures the uniqueness of $c_{m,0}$.
\end{proof}


\section{Analysis of a simple root}
\label{sec:analysis}
This section establishes a rigorous proof for
 the Q-order of convergence of the secant method
 in the case of a simple root.

Let $\alpha$ denote the root of function $f(x)$, 
 $\{x_n\}$ be the sequence of iterates near $\alpha$,
 and 
 \begin{align}
   e_n &:= x_n - \alpha,\label{eq:iterateSignError}\\
   E_n &:= |x_n - \alpha|\label{eq:iterateNosignError}
 \end{align}
 be the iteration errors. 
\begin{hypothesis} 
  \label{hyp:nonTerminate}
  We exclude the trivial case in which the iteration reaches the root 
   in finitely many steps; that is, 
   we assume $x_n \neq \alpha$ for all $n \in \bbN$. Consequently,
  \begin{equation}
    \label{eq:xnNotEqualalpha}
    \forall n \in \bbN, \quad x_n \neq \alpha, 
    \quad e_n \neq 0, \quad \text{and} \quad E_n > 0.
  \end{equation}
\end{hypothesis}

Given this hypothesis, the following lemma establishes that
 the iteration of the secant method \eqref{eq:SecantFormula}
 is well-defined under certain conditions.

\begin{lemma}
  \label{lem:simpleEnNoteq}
  Consider a function $f(x) \in \calC^{2}(\calB)$
   with domain $\calB := [\alpha - \delta, \alpha + \delta]$,
   and $\alpha$ is a simple root of $f$,
   i.e., $f$ satisfies $f(\alpha) = 0$
   and $f'(\alpha) \neq 0$.
  Then there exists $\delta_0 \in (0, \delta)$ such that
   for any $x_{n-1} \neq x_n$ satisfying
   $x_{n-1}, x_n \in \calB_0 := [\alpha - \delta_0, \alpha + \delta_0]$, 
   we have $f(x_{n-1}) \neq f(x_{n})$,
   and the secant method iteration~(\ref{eq:SecantFormula}) yields
   $x_{n+1} \neq x_{n}$.
\end{lemma}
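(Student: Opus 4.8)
The plan is to first shrink the interval so that $f'$ has no zero on it, and then to read off both conclusions from the strict monotonicity this forces. Since $f \in \calC^{2}(\calB)$, the derivative $f'$ is continuous on $\calB$, and $f'(\alpha) \neq 0$; by continuity there is $\delta_0 \in (0,\delta)$ such that $f'(x) \neq 0$ for every $x \in \calB_0 = [\alpha - \delta_0, \alpha + \delta_0]$ (indeed $f'$ keeps the sign of $f'(\alpha)$ throughout $\calB_0$). This is the only place where the smoothness of $f$ and the simplicity of the root are used in an essential, quantitative way.

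With this choice of $\delta_0$, Corollary~\ref{coro:reverseRoll} applies to $f$ on $\calB_0$, so $f$ is injective there. Hence for any $x_{n-1}, x_n \in \calB_0$ with $x_{n-1} \neq x_n$ we obtain $f(x_{n-1}) \neq f(x_n)$, which shows that the denominator in the secant step~\eqref{eq:SecantFormula} is nonzero and that $x_{n+1}$ is well-defined.

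For the second assertion, I would rewrite the secant iteration as
\[
  x_{n+1} - x_n = - f(x_n)\,\frac{x_n - x_{n-1}}{f(x_n) - f(x_{n-1})},
\]
in which the fraction is a well-defined, nonzero real number by the previous paragraph; therefore $x_{n+1} = x_n$ holds if and only if $f(x_n) = 0$, and it suffices to rule this out. Since $f' \neq 0$ on $\calB_0$, the continuous injective function $f$ is strictly monotone on $\calB_0$, so $\alpha$ is its only zero there; combined with Hypothesis~\ref{hyp:nonTerminate}, which gives $x_n \neq \alpha$, we conclude $f(x_n) \neq 0$ and hence $x_{n+1} \neq x_n$.

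The argument is largely routine; the one point that deserves care is not to conflate the pointwise statement ``$f'(x) \neq 0$ on $\calB_0$'' with ``$f$ has a single zero in $\calB_0$'' without passing through monotonicity (equivalently, through Corollary~\ref{coro:reverseRoll} together with the mean value theorem), and likewise to notice that the conclusion $x_{n+1} \neq x_n$ genuinely needs $x_n \neq \alpha$, which is exactly what Hypothesis~\ref{hyp:nonTerminate} supplies.
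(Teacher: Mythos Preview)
Your proof is correct and follows essentially the same approach as the paper: pick $\delta_0$ so that $f'$ is nonvanishing on $\calB_0$, invoke Corollary~\ref{coro:reverseRoll} for injectivity to get $f(x_{n-1})\neq f(x_n)$, and then use $x_n\neq\alpha$ from Hypothesis~\ref{hyp:nonTerminate} to conclude $f(x_n)\neq 0$ and hence $x_{n+1}\neq x_n$. The only cosmetic difference is that the paper bounds $|x_{n+1}-x_n|$ from below, whereas you factor it as a product of two nonzero terms; both arguments are equivalent.
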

\begin{proof}
  The continuity of $f'$ and the assumption $f'(\alpha) \neq 0$ imply
  \begin{equation*}
     \exists \delta_0 \in (0, \delta) \ 
     \text{ s.t. } \
     \forall x \in \calB_0, \ f'(x) \neq 0.
  \end{equation*}
  Together with Corollary~\ref{coro:reverseRoll}, $x_{n-1} \neq x_n$, 
   and $x_{n-1}, x_n \in \calB_0$, we have $f(x_{n-1}) \neq f(x_{n})$. 
  Similarly, the assumption \eqref{eq:xnNotEqualalpha} 
   and $x_{n}, \alpha \in \calB_0$
   yield that $f(x_{n}) \neq f(\alpha) = 0$. 
  According to the secant method iteration
   \eqref{eq:SecantFormula} and $x_{n-1} \neq x_n$, we obtain
  \[
    \renewcommand{\arraystretch}{1.2}
    \begin{array}{ll}
      |x_{n+1} - x_n| = \left|\frac{f(x_n)}{f(x_{n-1}) - f(x_{n})}
                              (x_{n-1} - x_n)\right|
                      \ge \frac{|f(x_n)|}{|f(x_{n-1})| + |f(x_{n})|}
                        |x_{n-1} - x_n| 
                      > 0.
    \end{array}
  \] 
\end{proof}

Next, we analyze the error relation of the secant method
 for simple roots.

\begin{lemma}[Error relation of the secant method for simple roots]
  \label{lem:secantErrorRelation}
  Consider a function $f(x) \in \calC^{2}(\calB)$,
   and $\alpha$ is a simple root of $f$.
  If $x_{n-1} \neq x_{n}$,
   then for the secant method~(\ref{eq:SecantFormula}), 
   there exist $\xi_n \in \range(x_{n-1}, x_n)$
   and $\zeta_n \in \range(x_{n-1}, x_n, \alpha)$ such that
  \begin{equation*}
    \renewcommand{\arraystretch}{1.2}
    \begin{array}{ll} 
        x_{n+1} - \alpha 
      = (x_n - \alpha)(x_{n-1} - \alpha)\frac{f''(\zeta_n)}{2f'(\xi_n)}.
    \end{array}
  \end{equation*}
\end{lemma}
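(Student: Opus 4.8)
The plan is to rewrite the secant step as an error recursion, factor out $e_n e_{n-1}$ by dividing $f$ by $x-\alpha$, compress the two resulting slopes into a single mean-value point, and read off the $f''/2$ factor by Taylor expansion. For Step~1, I would subtract $\alpha$ from \eqref{eq:SecantFormula}, express everything through $e_n$ and $e_{n-1}$, and clear the denominator $f(x_n)-f(x_{n-1})$ (nonzero, since otherwise \eqref{eq:SecantFormula} is undefined); this yields the purely algebraic identity
\[
  e_{n+1} = \frac{e_{n-1}\,f(x_n) - e_n\,f(x_{n-1})}{f(x_n) - f(x_{n-1})}.
\]

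For Step~2, set $\varphi(x) := \dfrac{f(x)-f(\alpha)}{x-\alpha}$ for $x\neq\alpha$ and $\varphi(\alpha):=f'(\alpha)$. Since $f\in\calC^2(\calB)$, a standard Taylor argument shows $\varphi\in\calC^1(\calB)$ with $\varphi'(\alpha)=\tfrac12 f''(\alpha)$; in particular $\varphi$ is $\calC^1$ on $\range[x_{n-1},x_n]$, the only delicate case being $\alpha\in\range(x_{n-1},x_n)$, where the removable singularity is filled in by $\varphi(\alpha)=f'(\alpha)$. Because $f(\alpha)=0$ and $e_{n-1},e_n\neq0$ by Hypothesis~\ref{hyp:nonTerminate}, we have $f(x_k)=e_k\varphi(x_k)$, so the identity of Step~1 becomes $e_{n+1} = e_n e_{n-1}\,\dfrac{\varphi(x_n)-\varphi(x_{n-1})}{f(x_n)-f(x_{n-1})}$. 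For Step~3, I would apply Cauchy's mean value theorem (Theorem~\ref{thm:cauchyMid}) to the pair $(\varphi,f)$ on $\range[x_{n-1},x_n]$ to obtain a single $\xi_n\in\range(x_{n-1},x_n)$ with $\dfrac{\varphi(x_n)-\varphi(x_{n-1})}{f(x_n)-f(x_{n-1})} = \dfrac{\varphi'(\xi_n)}{f'(\xi_n)}$. (As stated, Theorem~\ref{thm:cauchyMid} needs $f'\neq0$ on $\range(x_{n-1},x_n)$, which holds in the setting where this lemma is used; alternatively, two applications of Lagrange's mean value theorem, Theorem~\ref{thm:meanValue}, to numerator and denominator separately give the same formula using only $f(x_{n-1})\neq f(x_n)$, at the cost of two a priori different points.)

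For Step~4, note that for $y\neq\alpha$ the quotient rule together with $f(\alpha)=0$ gives $\varphi'(y)=\dfrac{f'(y)(y-\alpha)-f(y)}{(y-\alpha)^2}$, while expanding $f$ about $a=y$ to first order with Lagrange remainder (Theorem~\ref{thm:taylor} with $n=1$) and evaluating at $x=\alpha$ yields $\zeta_n\in\range(y,\alpha)$ with $f'(y)(y-\alpha)-f(y)=\tfrac12 f''(\zeta_n)(y-\alpha)^2$; hence $\varphi'(y)=\tfrac12 f''(\zeta_n)$. Taking $y=\xi_n$ and combining Steps~2--4 gives
\[
  e_{n+1} = e_n e_{n-1}\,\frac{f''(\zeta_n)}{2\,f'(\xi_n)},
\]
which is the assertion once one checks the routine inclusion $\range(\xi_n,\alpha)\subseteq\range(x_{n-1},x_n,\alpha)$; in the degenerate subcase $\xi_n=\alpha$ (which forces $\alpha$ to lie strictly between $x_{n-1}$ and $x_n$) one instead uses $\varphi'(\alpha)=\tfrac12 f''(\alpha)$ directly and takes $\zeta_n:=\alpha$.

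The main obstacle is Step~3: Steps~1, 2, and 4 are essentially bookkeeping, but to match the statement we must have the numerator slope of $\varphi$ and the denominator slope of $f$ evaluated at the \emph{same} point $\xi_n$, so that $f'(\xi_n)$ is exactly the factor appearing in the lemma. This is precisely what Cauchy's mean value theorem provides over two separate Lagrange mean value theorems; a secondary technical point is justifying that $\varphi$ extends to a $\calC^1$ function through $\alpha$ when $\alpha$ lies between the two iterates.
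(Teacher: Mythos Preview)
Your argument is correct and very close in spirit to the paper's: both routes rewrite the secant step as $e_{n+1}=e_ne_{n-1}\cdot f[x_{n-1},x_n,\alpha]/f[x_{n-1},x_n]$, turn the denominator into $f'(\xi_n)$ by the mean value theorem, and turn the second divided difference into $\tfrac12 f''(\zeta_n)$ via a mean-value step followed by Taylor's theorem. The differences are cosmetic but worth noting. First, you anchor the auxiliary quotient at $\alpha$, taking $\varphi(x)=f[x,\alpha]$, whereas the paper anchors at $x_n$, taking $g(x)=f[x,x_n]$ and applying Lagrange's theorem on $\range[x_{n-1},\alpha]$; both choices require extending the divided difference through its removable singularity, and both are justified by $f\in\calC^2$. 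Second, your primary Step~3 uses Cauchy's mean value theorem to force the numerator and denominator slopes to share a single point $\xi_n$, which needs $f'\neq 0$ on $\range(x_{n-1},x_n)$---an assumption not stated in the lemma itself (you correctly flag this). The paper instead does exactly what your fallback suggests: two independent applications of Lagrange's theorem, one to $f$ giving $\xi_n$ and one to $g$ giving an intermediate point $\beta$, which still yields the stated formula since $\xi_n$ and $\zeta_n$ are not required to coincide. So your ``alternative'' is in fact the paper's actual proof, and it matches the lemma's hypotheses without the extra nonvanishing condition.
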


\begin{proof}
  Applying algebraic manipulations 
   to the secant method (\ref{eq:SecantFormula}) yields
  \begin{equation}
    \label{eq:secantErrorRelation2}
    \renewcommand{\arraystretch}{1.2}
    \begin{array}{ll} 
        x_{n+1} - \alpha
      = (x_n - \alpha)(x_{n-1} - \alpha)
        \frac{\frac{f[x_{n-1}, x_n] - f[x_n, \alpha]}{x_{n-1} - \alpha}}
             {f[x_{n-1}, x_n]},
    \end{array}
   \end{equation}
   where $f[a, b] := \frac{f(a) - f(b)}{a - b}$.
  By Theorem~\ref{thm:meanValue} and the condition $x_{n-1} \neq x_{n}$,
   there exists $\xi_n \in \range(x_{n-1}, x_n)$ such that 
  \begin{equation}
    \label{eq:DiffToDeri1}
    \renewcommand{\arraystretch}{1.2}
    \begin{array}{ll} 
        f[x_{n-1}, x_n] 
      = \frac{f(x_{n-1}) - f(x_n)}{x_{n-1} - x_n} 
      = f'(\xi_n).
    \end{array}
  \end{equation}
  Define a function $g(x) := f[x, x_n]$. 
  Using assumption \eqref{eq:xnNotEqualalpha} and
   applying Theorem~\ref{thm:meanValue} to $g(x)$, it follows that 
  \[
    \renewcommand{\arraystretch}{1.2}
    \begin{array}{ll} 
      \frac{f[x_{n-1}, x_n] - f[\alpha, x_n]}{x_{n-1} - \alpha} 
      = \frac{g(x_{n-1}) - g(\alpha)}{x_{n-1} - \alpha}
      = g'(\beta) 
    \end{array}
  \]
   for some $\beta \in \range(x_{n-1}, \alpha)$.
  Computing the derivative of $g(\beta)$ and using 
   Theorem~\ref{thm:taylor}, we have
  \begin{equation}
    \label{eq:DiffToDeri2}
    \renewcommand{\arraystretch}{1.2}
    \begin{array}{ll} 
        g'(\beta) 
      = (f[\beta, x_n])' 
      = \frac{f(x_n) - f(\beta) - (x_n - \beta)f'(\beta)}{(x_n - \beta)^2}
      = \frac{f''(\zeta_n)}{2}
    \end{array}
  \end{equation}
   for some $\zeta_n \in \range(\beta, x_{n}) 
                     \subseteq \range(x_{n-1}, x_n, \alpha)$.
  The proof is completed by substituting (\ref{eq:DiffToDeri1}) 
   and (\ref{eq:DiffToDeri2}) into (\ref{eq:secantErrorRelation2}). 
\end{proof}

The error relation in Lemma~\ref{lem:secantErrorRelation} 
 establishes the Q-superlinear convergence of the secant method
 in the case of a simple root.

\begin{theorem}[Convergence of the secant method for simple roots]
  \label{thm:secantConvergence}
  Consider a function $f(x) \in \calC^{2}(\calB)$,
   and $\alpha$ is a simple root of $f$.
  If both $x_0$ and $x_1$ are chosen sufficiently close to 
   the root $\alpha$, $x_0 \neq x_1$ and $f''(\alpha) \neq 0$, 
   then the sequence of iterates $\{x_n\}$ in the secant method
   converges to $\alpha$ 
   with Q-order $p = r_0 \approx 1.618$ 
   and AEC $c = m_\alpha^{r_0 - 1}$, 
   i.e.,
  \begin{equation*}
    \renewcommand{\arraystretch}{1.2}
    \begin{array}{ll} 
        \lim\limits_{n \to \infty} 
        \frac{E_{n+1}}{E_{n}^{r_0}} 
      = m_\alpha^{r_0 - 1},
    \end{array}
  \end{equation*}
   where $E_{n}$ is as defined in \eqref{eq:iterateNosignError}
   and $m_\alpha := \left|\frac{f''(\alpha)}{2f'(\alpha)} \right| > 0$.
\end{theorem}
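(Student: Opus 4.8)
The plan is to feed the exact error identity of Lemma~\ref{lem:secantErrorRelation} into a two-stage argument: first a contraction estimate guaranteeing that the iterates stay in a small neighbourhood of $\alpha$ and converge to it, then a logarithmic change of variables that turns the determination of the Q-order into a scalar first-order linear recurrence governed by the golden ratio.

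\emph{Step 1 (well-posedness and convergence).} Apply Lemma~\ref{lem:simpleEnNoteq} to fix $\delta_0 \in (0,\delta)$ so that $f' \neq 0$ on $\calB_0 = [\alpha-\delta_0,\alpha+\delta_0]$ and the iteration~\eqref{eq:SecantFormula} is well-defined for consecutive iterates in $\calB_0$. By continuity of $f'$ on the compact set $\calB_0$ and of $f''$ on $\calB$, there are constants $L>0$ and $M''>0$ with $|f'|\ge L$ on $\calB_0$ and $|f''|\le M''$ on $\calB$; hence the factor $M_n := \bigl|\frac{f''(\zeta_n)}{2f'(\xi_n)}\bigr|$ in Lemma~\ref{lem:secantErrorRelation} is bounded by $K := M''/(2L)$ whenever $x_{n-1},x_n \in \calB_0$. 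Put $\delta_1 := \min(\delta_0,\tfrac{1}{2K})$ and take $E_0,E_1 \le \delta_1$ --- this is the precise meaning of ``$x_0,x_1$ sufficiently close to $\alpha$''. Using $x_0\neq x_1$, Hypothesis~\ref{hyp:nonTerminate}, Lemma~\ref{lem:simpleEnNoteq}, and the error relation in the form $E_{n+1}=E_nE_{n-1}M_n$, an induction shows that for all $n$ one has $x_n\in\calB_0$, $x_n\neq x_{n-1}$, $E_n>0$, and $E_{n+1}\le (K\delta_1)E_n \le \tfrac12 E_n$; consequently $E_n\to 0$, i.e.\ $x_n\to\alpha$.

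\emph{Step 2 (reduction to a first-order recurrence and passage to the limit).} Since $\xi_n\in\range(x_{n-1},x_n)$ and $\zeta_n\in\range(x_{n-1},x_n,\alpha)$ with $x_{n-1},x_n\to\alpha$, continuity gives $M_n\to m_\alpha=\bigl|\frac{f''(\alpha)}{2f'(\alpha)}\bigr|$, which is \emph{strictly positive} precisely because $f''(\alpha)\neq 0$. As $E_n>0$ for all $n$, set $u_n:=\ln E_n$ and $v_n:=\ln M_n$, so $v_n\to v:=\ln m_\alpha\in\bbR$ and $E_{n+1}=E_nE_{n-1}M_n$ becomes $u_{n+1}=u_n+u_{n-1}+v_n$. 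Introduce $w_n:=u_n-r_0u_{n-1}=\ln\!\bigl(E_n/E_{n-1}^{\,r_0}\bigr)$; using the defining relations of the golden ratio $r_0+r_1=1$ and $r_0r_1=-1$ (cf.\ Theorem~\ref{thm:BinetFormula}), a one-line computation collapses the two-step recurrence into the first-order recurrence $w_{n+1}=r_1w_n+v_n$. Because $|r_1|=\tfrac{\sqrt5-1}{2}<1$ and $v_n\to v$, writing $w_{n+1}-w^\ast=r_1(w_n-w^\ast)+(v_n-v)$ with $w^\ast:=v/(1-r_1)=v/r_0$ shows by a standard estimate that $w_n\to w^\ast$. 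Therefore $\ln\!\bigl(E_{n+1}/E_n^{\,r_0}\bigr)=w_{n+1}\to v/r_0=\tfrac{1}{r_0}\ln m_\alpha$, and exponentiating --- using $\tfrac{1}{r_0}=r_0-1$, which follows from $r_0^2=r_0+1$ --- yields $\lim_{n\to\infty}E_{n+1}/E_n^{\,r_0}=m_\alpha^{\,r_0-1}>0$. Since $1<r_0<2$, this is exactly Q-superlinear convergence with Q-order $p=r_0$ and AEC $c=m_\alpha^{\,r_0-1}$ in the sense of Definition~\ref{def:p-orderConvergence}.

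I expect the main obstacle to be Step 1: one must choose $\calB_0$ and the threshold $\delta_1$ so that the iterates provably never escape $\calB_0$, which is what makes the uniform bound $M_n\le K$ --- and hence the geometric contraction $E_{n+1}\le\tfrac12 E_n$ that simultaneously gives convergence and keeps the next iterate inside $\calB_0$ --- legitimately available at every step. Once convergence and the uniform estimates are secured, the golden-ratio content enters only through the elementary identities $r_0+r_1=1$, $r_0r_1=-1$, which diagonalize the error recursion into the one-step recurrence for $w_n$ (one could alternatively iterate $E_{n+1}=E_nE_{n-1}M_n$ directly and invoke Binet's formula via Corollary~\ref{cor:FibonacciIteration}, but the logarithmic first-order reduction is more transparent and handles the non-constant factor $M_n$ cleanly).
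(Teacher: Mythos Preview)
Your proposal is correct. The overall architecture---first establish a uniform bound on the factor $M_n$ and use it to trap the iterates in $\calB_0$ and force $E_n\to 0$, then exploit $M_n\to m_\alpha>0$ to pin down the Q-order---matches the paper's. The difference lies in how each step is executed.

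For Step~1, the paper bounds $ME_{n+1}\le (ME_n)(ME_{n-1})$ and iterates this to get $ME_n\le\eta^{F_{n+1}}$ with $\eta<1$, invoking the Fibonacci numbers already here; your direct contraction $E_{n+1}\le K\delta_1 E_n\le\tfrac12 E_n$ is simpler and perfectly adequate for mere convergence. For Step~2, the paper unwinds the recurrence $E_{n+1}=E_nE_{n-1}m_n$ into the closed form $E_n=E_1^{F_n}E_0^{F_{n-1}}\prod_{j=1}^{n-1}m_j^{F_{n-j}}$, forms $E_{n+1}/E_n^{r_0}$, replaces each exponent $F_{k+1}-r_0F_k$ by $r_1^k$ via Corollary~\ref{cor:FibonacciIteration}, and then sandwiches the resulting product using $m_n\in(m_\alpha/l,\,l m_\alpha)$ for $n>N$ and arbitrary $l>1$. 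Your route---pass to $u_n=\ln E_n$, observe that $w_n:=u_n-r_0u_{n-1}$ satisfies $w_{n+1}=r_1w_n+v_n$ with $v_n\to\ln m_\alpha$, and read off the limit $w_n\to(\ln m_\alpha)/(1-r_1)=(r_0-1)\ln m_\alpha$---is the linear-algebra diagonalisation of the same recurrence and handles the variable coefficient $m_n$ without the product-splitting bookkeeping. One small point worth making explicit: the logarithm $v_n=\ln M_n$ is legitimate for \emph{every} $n$, not just eventually, because Hypothesis~\ref{hyp:nonTerminate} forces $E_{n+1}>0$ and hence $M_n=E_{n+1}/(E_nE_{n-1})>0$.
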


\begin{proof}
  We first prove that the sequence $\{x_n\}$ generated 
   by the secant method converges to the root $\alpha$.
   
  Define 
   $M := \frac{\max_{x \in \calB_0} |f''(x)|}
   {2 \min_{x \in \calB_0} |f'(x)|}$,
   where $\calB_0$ is as described in Lemma \ref{lem:simpleEnNoteq}.
  $M$ is well-defined because $\min_{x \in \calB_0} |f'(x)| > 0$
   and $\max_{x \in \calB_0} |f''(x)| < +\infty$,
   and we have $M>0$ from $f''(\alpha) \neq 0$.
  Since both $x_0$ and $x_1$ are chosen sufficiently close to $\alpha$,
   we can let $\delta_1 := \min(\delta_0, \frac{1}{M}) \in (0, \delta)$ and
  \[E_0 = |x_0 - \alpha| < \delta_1, \quad E_1 = |x_1 - \alpha| < \delta_1. \]
  The assumption \eqref{eq:xnNotEqualalpha} gives that
   for all $\ell \in \bbN$, $E_\ell > 0$.
  Moreover, we claim that
   for all $\ell \in \bbN$, $E_\ell < \delta_1$,
   and for all $\ell \in \bbN^+$, $x_{\ell-1} \neq x_\ell$.
  We already have that $E_0, E_1 < \delta_1$ and $x_0 \neq x_1$.
  Assume by the induction hypothesis that
   $E_{n-1}, E_{n} < \delta_1$ and $x_{n-1} \neq x_n$.
  Then it suffices to show
   $E_{n+1} < \delta_1$ and $x_n \neq x_{n+1}$.
  On the one hand,
   it follows that
   \[E_{n+1} = E_n E_{n-1} \frac{|f''(\zeta_n)|}{2|f'(\xi_n)|}
            \leq E_n E_{n-1} M < E_n 
            < \delta_1, \]
   where the first step follows from Lemma~\ref{lem:secantErrorRelation},
   the second step from the definition of $M$
   and $\zeta_n, \xi_n \in \range(x_{n-1}, x_n, \alpha)
        \subseteq (\alpha - \delta_1, \alpha + \delta_1) \subseteq \calB_0$,
   and the third step from $M E_{n-1} < M \delta_1 \le 1$.
  On the other hand, $E_{n-1}, E_{n} < \delta_1$ 
   implies $x_{n-1}, x_n \in \calB_0$,
   which, together with Lemma \ref{lem:simpleEnNoteq}, 
   yields $x_n \neq x_{n+1}$.
  Therefore, the induction step is completed,
   and we get
  \begin{equation}
    \label{eq:secantErrorRelation}
    E_{n+1} = E_n E_{n-1} m_n,
  \end{equation}
   where $m_n := \left|\frac{f''(\zeta_n)}{2f'(\xi_n)} \right|$. 
  Multiplying both sides of (\ref{eq:secantErrorRelation}) by $M$ gives
  \begin{equation}
    \label{eq:secantErrorMEInequality}
    M E_{n+1} = M E_n E_{n-1} m_n \leq M E_n M E_{n-1}.
  \end{equation}
  Define $\eta := \max(M E_0, M E_1)$; we have $\eta < M \delta_1 \le 1$.
  Then induction on (\ref{eq:secantErrorMEInequality}) shows that
  \begin{equation*}
     \begin{aligned}
       & ME_{n} \le ME_{n-1} ME_{n-2} < \eta^{F_n + F_{n-1}} 
        = \eta^{F_{n+1}},
     \end{aligned}
  \end{equation*}
   where $\{F_n\}$ is the Fibonacci sequence as in 
   Definition~\ref{def:Fibonacci}.
  Thus, \mbox{$E_n < \frac{1}{M} \eta^{F_{n+1}}$}.
  From Theorem~\ref{thm:BinetFormula} and $|r_1| < 1$, 
   it holds that $F_n \sim \frac{r_0^{n}}{\sqrt{5}}$ as 
   $n \to \infty$. 
  Since $\eta < 1$, we have $\lim\limits_{n\to \infty} E_n = 0$,
   which
   shows that $\{x_n\}$ converges to $\alpha$.

  Next, we prove that the convergence order is $p = r_0$
  and the AEC is $c = m_\alpha^{r_0 - 1}$. 

  By performing induction on the recurrence relation
   \eqref{eq:secantErrorRelation}, 
   we obtain the closed-form expression for $E_n$ as follows:
   \begin{align*}
     &E_n = E_1^{F_n} E_0^{F_{n-1}} m_1^{F_{n-1}} \cdots 
     m_{n-2}^{F_2} m_{n-1}^{F_1}, \\
     &E_{n+1} = E_1^{F_{n+1}} E_0^{F_n} m_1^{F_n} \cdots m_{n-1}^{F_2} 
     m_n^{F_1}.
   \end{align*}
  Hence
  \begin{align*}
    \renewcommand{\arraystretch}{1.2}
    \begin{array}{ll} 
     &\frac{E_{n+1}}{E_n^{r_0}} 
     = E_1^{F_{n+1}-r_0F_n} E_0^{F_n-r_0F_{n-1}} 
        m_1^{F_n-r_0F_{n-1}} \cdots 
        m_{n-1}^{F_2 - r_0 F_1} m_n^{F_1} 
    \\ & \hspace{.88cm}    
     = E_1^{r_1^n} E_0^{r_1^{n-1}} 
        m_1^{r_1^{n-1}} \cdots m_{n-1}^{r_1^1} m_n^1,
    \end{array}
  \end{align*}
   where the second step follows from 
   Corollary~\ref{cor:FibonacciIteration}.  
  Since $\{x_n\}$ converges to $\alpha$, this yields 
   $\lim\limits_{n \to \infty} m_n = m_\alpha > 0$, 
   which means
  \begin{equation}
    \label{eq:m_nRange}
    \renewcommand{\arraystretch}{1.2}
    \begin{array}{ll} 
      \exists N\in \bbN
      \ \text{ s.t. } \ 
      \forall n>N, \ m_n \in 
      \left(\frac{1}{l} m_\alpha, l m_\alpha\right),
    \end{array} 
  \end{equation}
   where $l > 1$ is an arbitrary constant. 
  We define
  \begin{align*}
    & A_n := E_1^{r_1^n} \cdot E_0^{r_1^{n-1}} \cdot 
        m_1^{r_1^{n-1}} \cdot m_2^{r_1^{n-2}} 
        \cdots m_{N}^{r_1^{n-N}}, \\ 
    & B_n := m_{N+1}^{r_1^{n-N-1}} \cdot m_{N+2}^{r_1^{n-N-2}} \cdots  
        m_{n-1}^{r_1^1} \cdot m_n^1
  \end{align*}
   so that $\frac{E_{n+1}}{E_n^{r_0}} = A_n B_n$.
  Then $|r_1| < 1$ implies $\lim\limits_{n \to \infty} A_n = 1$.
  As for $B_n$, (\ref{eq:m_nRange}) gives
  \begin{align*}
    \renewcommand{\arraystretch}{1.2}
    \begin{array}{ll} 
      &B_n 
      \leq 
      \begin{cases}
        (lm_\alpha)^1\left(\frac{1}{l}m_\alpha \right)^{r_1} 
          \cdots \left(lm_\alpha\right)^{r_1^{n-N-1}} 
        \hspace{1cm} \text{if $n - N -1$ is even}; \\
      (lm_\alpha)^1\left(\frac{1}{l}m_\alpha \right)^{r_1} 
        \cdots \left(\frac{1}{l}m_\alpha\right)^{r_1^{n-N-1}} 
        \hspace{0.85cm} \text{if $n - N -1$ is odd} 
     \end{cases}  
     \\ & \hspace{0.55cm}
     = l^{1 - r_1 + r_1^2 - \cdots + (-r_1)^{n-N-1}} 
        \cdot m_\alpha^{1+r_1 + r_1^2 + \cdots + r_1^{n-N-1}}
     \\ & \hspace{0.55cm}
     = l^{\sum_{i=0}^{n-N-1}(-r_1)^i} 
        \cdot m_\alpha^{\sum_{i=0}^{n-N-1}r_1^i},
    \end{array} 
  \end{align*}
   where the discrimination of the even-numbered factors in 
   the first line comes from the fact that $r_1 < 0$. 
  Taking $n\to\infty$ in the above inequality yields
  \begin{equation}
     \label{eq:BnLimitle}
     \lim\limits_{n\to\infty}B_n 
     \leq l^{\frac{1}{1+r_1}} m_\alpha^{\frac{1}{1-r_1}}
     = l^{\frac{1}{1+r_1}} m_\alpha^{r_0 - 1}. 
  \end{equation}
  By similar arguments, we derive
  \begin{equation}
     \label{eq:BnLimitge}
     \lim\limits_{n\to\infty}B_n 
     \geq l^{-\frac{1}{1+r_1}} m_\alpha^{r_0 - 1}. 
  \end{equation}
  Since inequalities \eqref{eq:BnLimitle} and \eqref{eq:BnLimitge} hold
   for any $l>1$, taking $l\to 1^+$ in both implies
   $\lim\limits_{n\to\infty}B_n=m_\alpha^{r_0 - 1}$.
  Therefore,
  \begin{align*}
    \renewcommand{\arraystretch}{1.2}
    \begin{array}{ll} 
      \lim\limits_{n\to\infty} \frac{E_{n+1}}{E_n^{r_0}} 
      = \lim\limits_{n\to\infty} A_{n} \lim\limits_{n\to\infty} B_{n} 
      = m_\alpha^{r_0-1},
    \end{array}
  \end{align*}
   which, together with Definition~\ref{def:p-orderConvergence},
   shows that the secant method converges with Q-order $p = r_0$ and 
   the AEC $c = m_\alpha^{r_0-1}$.
\end{proof}

Furthermore, the application of asymptotic analysis 
 to the secant method has also demonstrated 
 analogous conclusions \cite{kincaid2009numerical}.

Here, to compare the secant method with Newton's method, 
 we present the Q-order of convergence of Newton's method without proof. 
In fact, most numerical analysis textbooks already provide detailed and
 rigorous mathematical proofs for the Q-order of convergence 
 of Newton's method.

\begin{theorem}[Convergence of Newton's method for simple roots] 
  \label{thm:NewtonConvergence}
  Consider a function $f(x)\in \calC^{2}(\calB)$,
   and $\alpha$ is a simple root of $f$.
  If $x_0$ is chosen sufficiently close to the root $\alpha$,
   then the sequence of iterates $\{x_n\}$ in Newton's method 
   converges to $\alpha$ with Q-order $p = 2$ 
   and AEC $c = m_\alpha$,
    i.e.,
  \begin{equation*}
    \renewcommand{\arraystretch}{1.2}
    \begin{array}{ll} 
      \lim\limits_{n \to \infty}
      \frac{E_{n+1}}{E_n^2}
      = m_\alpha.
    \end{array}
  \end{equation*}
\end{theorem}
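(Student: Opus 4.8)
The plan is to mirror the structure of the proof of Theorem~\ref{thm:secantConvergence}, replacing the two-term error recursion \eqref{eq:secantErrorRelation} by the one-term quadratic recursion characteristic of Newton's method. First I would derive the error relation. Since $f \in \calC^2(\calB)$ and $f'(\alpha) \neq 0$, continuity of $f'$ yields a subinterval $\calB_0 := [\alpha - \delta_0, \alpha + \delta_0]$ on which $f'$ does not vanish, so Newton's iteration $x_{n+1} = x_n - f(x_n)/f'(x_n)$ is well-defined whenever $x_n \in \calB_0$. Expanding $f$ by Theorem~\ref{thm:taylor} about $x_n$ and evaluating at $\alpha$,
\[
  0 = f(\alpha) = f(x_n) + f'(x_n)(\alpha - x_n) + \tfrac{f''(\zeta_n)}{2}(\alpha - x_n)^2
\]
for some $\zeta_n \in \range(x_n, \alpha) \subseteq \calB_0$; dividing by $f'(x_n)$ and rearranging gives the Newton error relation
\[
  e_{n+1} = x_{n+1} - \alpha = \frac{f''(\zeta_n)}{2 f'(x_n)}\, e_n^2,
\]
the analogue of \eqref{eq:secantErrorRelation} but quadratic in a single error term, with no Fibonacci bookkeeping required.

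Next I would establish convergence by a contraction argument. Set $M := \dfrac{\max_{x\in\calB_0}|f''(x)|}{2\min_{x\in\calB_0}|f'(x)|}$, which is finite and positive because $\min_{\calB_0}|f'| > 0$ and, by $f''(\alpha) \neq 0$, $\max_{\calB_0}|f''| > 0$. Choosing $x_0$ close enough to $\alpha$ that $E_0 < \delta_1 := \min(\delta_0, 1/M)$, the error relation gives $E_{n+1} = |f''(\zeta_n)/(2f'(x_n))|\,E_n^2 \le M E_n^2 < E_n < \delta_1$ whenever $x_n \in \calB_0$. An induction — using $x_n \in \calB_0 \Rightarrow \zeta_n \in \calB_0 \Rightarrow x_{n+1} \in \calB_0$ — then shows $E_n < \delta_1$ for all $n$, and multiplying through by $M$ yields $M E_{n+1} \le (M E_n)^2$, hence $M E_n \le (M E_0)^{2^n} \to 0$. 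Thus $\{x_n\}$ converges to $\alpha$.

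Finally, I would take limits in the error relation: $E_{n+1}/E_n^2 = |f''(\zeta_n)/(2f'(x_n))|$. Since $x_n \to \alpha$ and $\zeta_n \in \range(x_n,\alpha)$ forces $\zeta_n \to \alpha$, continuity of $f'$ and $f''$ gives $\lim_{n\to\infty} E_{n+1}/E_n^2 = |f''(\alpha)/(2f'(\alpha))| = m_\alpha > 0$, which by Definition~\ref{def:p-orderConvergence} is precisely Q-order $p = 2$ with AEC $c = m_\alpha$. The only delicate point — much simpler here than in the secant case, where the oscillating factors $(-r_1)^i$ and the Fibonacci exponents had to be controlled — is ensuring the iterates never leave $\calB_0$; this is handled cleanly once $E_0 < 1/M$, since the errors are then strictly decreasing from the first step and the whole argument collapses to a one-line induction.
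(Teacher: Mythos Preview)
The paper does not prove this theorem; immediately before the statement it says ``we present the Q-order of convergence of Newton's method without proof,'' deferring to standard textbooks. Your argument is exactly the classical textbook proof (Taylor expansion about $x_n$, the contraction $ME_{n+1}\le(ME_n)^2$, then passage to the limit using continuity of $f'$ and $f''$), and it is correct.

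One small remark: you invoke $f''(\alpha)\neq 0$ to ensure $M>0$, but unlike Theorem~\ref{thm:secantConvergence} the Newton statement here does not list that hypothesis explicitly. It is, however, implicit in the conclusion, since Definition~\ref{def:p-orderConvergence} requires the AEC $c=m_\alpha$ to be strictly positive. If $f''\equiv 0$ on $\calB_0$ the iteration would terminate in one step, violating Hypothesis~\ref{hyp:nonTerminate}; otherwise $M>0$ anyway and your argument goes through unchanged.
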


Clearly, in terms of the Q-order of convergence, 
 Newton's method outperforms the secant method. 
However, this does not imply that 
 for the same problem and the same accuracy requirement, 
 Newton's method requires less computational time than the secant method. 
Hence, we compare the time required by
 Newton's method and the secant method
 to achieve the same accuracy requirement.

\begin{corollary}
  \label{cor:NewtonSecantEfficiency}
  Consider solving $f(x) = 0$ near a root $\alpha$. 
  Let $m$ and $s \cdot m$ be the time to evaluate $f(x)$ 
   and $f'(x)$, respectively.
  If their initial values are chosen sufficiently close to the root $\alpha$,
   then the minimum time required to obtain the desired absolute 
   accuracy $\epsilon$ with Newton's method 
   and the secant method are respectively given by
  \begin{align*}
    T_N &= (1 + s)m \lceil \log_2 K \rceil,  \\
    T_S &= m \lceil \log_{r_0} K \rceil, 
  \end{align*}
   where 
      $K := \frac{\log (m_\alpha \epsilon)}
                 {\log (m_\alpha E_0)}$
   and $\lceil \cdot \rceil$ denotes the rounding-up operator,
   which maps a real number $x$ to 
   the smallest integer greater than or equal to $x$.
\end{corollary}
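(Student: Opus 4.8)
The plan is to factor each time estimate as (number of iterations needed to reach accuracy $\epsilon$) $\times$ (cost of one iteration), and then round the iteration count up to a whole number. For the iteration count, normalize the error by setting $u_n := m_\alpha E_n$. For Newton's method, Theorem~\ref{thm:NewtonConvergence} gives the asymptotic relation $E_{n+1} = m_\alpha E_n^2$, which I treat as an equality in this leading-order estimate; this becomes $u_{n+1} = u_n^2$, hence $u_n = u_0^{2^n}$ and $\log u_n = 2^n \log u_0$. Since the initial value is sufficiently close to $\alpha$ we have $u_0 = m_\alpha E_0 < 1$, so $\log u_0 < 0$, and the accuracy requirement $E_n \le \epsilon$, i.e. $u_n \le m_\alpha \epsilon$, is equivalent after dividing by the negative quantity $\log u_0$ (which reverses the inequality) to $2^n \ge \frac{\log(m_\alpha\epsilon)}{\log(m_\alpha E_0)} = K$. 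Because one naturally has $\epsilon < E_0$, forcing $K > 1$, the smallest admissible $n$ is $\lceil \log_2 K \rceil$. The secant method is handled identically with $2$ replaced by $r_0$: Theorem~\ref{thm:secantConvergence} gives $E_{n+1} = m_\alpha^{r_0-1} E_n^{r_0}$, so $u_{n+1} = m_\alpha E_{n+1} = m_\alpha^{r_0} E_n^{r_0} = u_n^{r_0}$, whence $u_n = u_0^{r_0^{\,n}}$ and the smallest $n$ with $u_n \le m_\alpha \epsilon$ is $\lceil \log_{r_0} K \rceil$.

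For the cost per iteration, one Newton step evaluates $f$ once and $f'$ once, costing $m + s\cdot m = (1+s)m$, so $T_N = (1+s)m\lceil \log_2 K \rceil$. A secant step, on the other hand, reuses the value $f(x_{n-1})$ already computed at the previous step, so it needs only the single new evaluation $f(x_n)$, at cost $m$; multiplying by the iteration count gives $T_S = m\lceil \log_{r_0} K \rceil$.

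The delicate point — the main obstacle — is that Theorems~\ref{thm:secantConvergence} and \ref{thm:NewtonConvergence} supply \emph{limits}, not exact recursions, so strictly speaking the iteration counts above are leading-order estimates, rigorous in the regime $E_0, \epsilon \to 0$; the honest reading either states the corollary asymptotically or absorbs the lower-order discrepancies — including the single extra evaluation needed to prime the two-point secant iteration — into the ``chosen sufficiently close'' hypothesis. Once the AEC relations are taken as exact, the remaining steps (iterating $u_{n+1} = u_n^q$, solving $q^n \ge K$ for the least integer $n$, and the evaluation bookkeeping) are routine.
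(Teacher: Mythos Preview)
Your derivation is correct and is essentially the standard argument: normalize via $u_n=m_\alpha E_n$, treat the AEC relations from Theorems~\ref{thm:NewtonConvergence} and~\ref{thm:secantConvergence} as equalities to get $u_{n+1}=u_n^{2}$ (Newton) and $u_{n+1}=u_n^{r_0}$ (secant), solve $q^{n}\ge K$ for the least integer $n$, and multiply by the per-step cost. The paper itself does not write this out; its entire proof is a citation to \cite[Sec.~2.3]{atkinson2008introduction}, and what you have supplied is precisely the computation one finds there, including your honest caveat that the iteration counts are leading-order estimates valid in the asymptotic regime.
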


\begin{proof}
  See \cite[Sec.~2.3]{atkinson2008introduction}.
\end{proof}

Therefore, for the same problem and precision requirements, 
 if the secant method outperforms Newton's method in computational time,
 i.e., $\frac{T_S}{T_N} < 1$, we can derive 
 the condition $s > \frac{\log 2}{ \log r_0} -1 \approx 0.44$.


\section{Analysis of a multiple root}
\label{sec:analysisMR}
We adopt the notation and Hypothesis~\ref{hyp:nonTerminate}
 introduced at the beginning of Section~\ref{sec:analysis}.
Without loss of generality, let $\alpha = 0$; otherwise, consider the
 function $\hat{f}(x) := f(x + \alpha)$ instead of $f(x)$. 
Therefore, by \eqref{eq:iterateSignError} and \eqref{eq:SecantFormula},
 the error $e_n = x_n - \alpha = x_n$ satisfies the secant method iteration
\begin{equation}
  \label{eq:secantMethod}
  \renewcommand{\arraystretch}{1.2}
  \begin{array}{ll}
    e_{n+1} = e_n - f(e_n)\frac{e_{n-1} - e_{n}}{f(e_{n-1}) - f(e_{n})}.
  \end{array}
\end{equation} 
Let $\calB = [-\delta, \delta]$ be
 a neighborhood containing the zero of the function $f(x)$.

\subsection{The Q-linear convergence near a multiple root}
\label{sec:linearConvergenceMR}

To properly characterize the Q-linear convergence of $\{e_n\}$,
 i.e., the limit $\lim\limits_{n \to \infty} \frac{E_{n+1}}{E_{n}}
 = \lim\limits_{n \to \infty} \frac{|e_{n+1}|}{|e_n|}$ exists
 and belongs to $(0, 1)$, we define 
\begin{equation}
  \label{eq:knForm}
  \renewcommand{\arraystretch}{1.2}
  \begin{array}{ll}
    k_n := \frac{e_{n+1}}{e_{n}}.
  \end{array}
\end{equation}
The assumption \eqref{eq:xnNotEqualalpha} ensures that
 $k_n$ is well-defined and $k_n \neq 0$.
Since $e_0 = x_0$ and $k_0 = \frac{e_1}{e_0} = \frac{x_1}{x_0}$,
 there is a bijection between the initial values $x_0, x_1$
 and $k_0, e_0$.
Thus, we may equivalently take $k_0, e_0$ as the initial values
 and study the conditions for the convergence of the secant method
 error sequence $\{e_n\}$ in the neighborhood $\calB$.

We first assume that $e_0$ and $e_1$ have the same sign,
 so that $k_0 > 0$.
For the case $k_0 < 0$, 
 Theorems~\ref{thm:knMoreRangeOdd}~and~\ref{thm:fx2mCommConvergence} 
 show that if $k_0$ and $e_0$ satisfy appropriate conditions,
 analogous convergence results still hold.
Without loss of generality, we further assume $e_0 > 0$; otherwise,
 one may consider the function $\tilde{f}(x) := f(-x)$,
 which reduces the problem to the case of $e_0 > 0$.

We will prove that if the initial values $k_0, e_0$ satisfy certain
 conditions, then the sequence $\{k_n\}$ converges to $c_{m, 0} \in (0, 1)$,
 where $c_{m, 0}$ is as described in Lemma~\ref{lem:characterRoots0}. 

First, the following lemma shows that under certain conditions,
 the iteration formula \eqref{eq:secantMethod} is well-defined.
\begin{lemma}
  \label{lem:enNoteq}
  Consider a function $f(x) \in \calC^{m+1}(\calB)$
   with $m\ge 2$,
   and $0$ is an $m$-fold root of $f$,
   i.e., $f$ satisfies $j=0, 1, \ldots, m-1,\ f^{(j)}(0)=0$
   and $f^{(m)}(0) \neq 0$.
  Then there exists $\delta_0 \in (0, \delta]$ such that
   for any $ x \in (0, \delta_0)$, $f'(x) \neq 0$.
  Furthermore, for $e_{n-1} \neq e_n$ satisfying
   $e_{n-1}, e_n \in (0, \delta_0)$, we have $f(e_{n-1}) \neq f(e_n)$, 
   and the secant method iteration \eqref{eq:secantMethod}
   yields $e_{n+1} \neq e_n$.
\end{lemma}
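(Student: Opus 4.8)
The plan is to mirror the structure of Lemma~\ref{lem:simpleEnNoteq}, adapting it to the multiple-root setting where the relevant neighborhood is the one-sided interval $(0,\delta_0)$ rather than a symmetric ball. First I would establish the existence of $\delta_0 \in (0,\delta]$ with $f'(x)\neq 0$ for all $x \in (0,\delta_0)$. This does not follow from continuity of $f'$ at $0$ alone, since $f'(0)=0$ when $m\ge 2$; instead I would use the Taylor expansion of $f'$ at $0$. Since $f^{(j)}(0)=0$ for $j=0,\dots,m-1$ and $f^{(m)}(0)\neq 0$, Theorem~\ref{thm:taylor} applied to $f'$ gives $f'(x) = \frac{f^{(m)}(\xi)}{(m-1)!}x^{m-1}$ for some $\xi \in \range(0,x)$, or more cleanly $f'(x) = \frac{f^{(m)}(0)}{(m-1)!}x^{m-1} + O(x^{m})$. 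Because $f^{(m)}$ is continuous and nonzero at $0$, there is $\delta_0 \in (0,\delta]$ on which $f^{(m)}$ keeps a fixed sign and $|f^{(m)}(x)| \ge \tfrac12|f^{(m)}(0)| > 0$; then for $x\in(0,\delta_0)$ the dominant term $\frac{f^{(m)}(\xi)}{(m-1)!}x^{m-1}$ is nonzero, so $f'(x)\neq 0$. Shrinking $\delta_0$ if necessary absorbs the remainder term.

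Next I would deduce $f(e_{n-1})\neq f(e_n)$ whenever $e_{n-1}\neq e_n$ and both lie in $(0,\delta_0)$. This is exactly the situation of Corollary~\ref{coro:reverseRoll}: $f\in\calC^1$ on the closed subinterval $\range[e_{n-1},e_n]\subseteq(0,\delta_0)$, and $f'$ is nonzero on its interior, so $f$ is injective there, giving $f(e_{n-1})\neq f(e_n)$. For the final claim $e_{n+1}\neq e_n$, I would reproduce the computation at the end of the proof of Lemma~\ref{lem:simpleEnNoteq}: from the secant iteration~\eqref{eq:secantMethod} and $e_{n-1}\neq e_n$,
\[
  |e_{n+1}-e_n| = \left|\frac{f(e_n)}{f(e_{n-1})-f(e_n)}(e_{n-1}-e_n)\right|
               \ge \frac{|f(e_n)|}{|f(e_{n-1})|+|f(e_n)|}|e_{n-1}-e_n|,
\]
so it remains to check $f(e_n)\neq 0$. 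Here I would invoke Hypothesis~\ref{hyp:nonTerminate}, i.e.\ \eqref{eq:xnNotEqualalpha}: since $e_n = x_n - \alpha \neq 0$ and $0$ is the only root of $f$ in $\calB$ (which can be arranged by shrinking $\delta$, since $f(x)=\frac{f^{(m)}(0)}{m!}x^m + O(x^{m+1})$ is nonzero for small $x\neq 0$), we get $f(e_n)\neq 0$, and hence $|e_{n+1}-e_n|>0$.

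The main obstacle is the first step: unlike the simple-root case, $f'(0)=0$, so one cannot quote continuity of $f'$ directly and must instead extract the leading-order behavior of $f'$ near $0$ from the multiplicity hypothesis. Care is also needed to ensure the remainder in the Taylor expansion genuinely does not spoil the sign of $f'$ on $(0,\delta_0)$ — this is why one works with the Lagrange form $f'(x)=\frac{f^{(m)}(\xi)}{(m-1)!}x^{m-1}$ and a uniform lower bound on $|f^{(m)}|$ near $0$, rather than with the $O(x^m)$ remainder alone. Everything else (injectivity, the nonvanishing-denominator estimate) is routine and parallels Lemma~\ref{lem:simpleEnNoteq}.
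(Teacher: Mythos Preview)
Your proposal is correct and follows essentially the same approach as the paper: Taylor-expand $f'$ near $0$ to show it is nonvanishing on $(0,\delta_0)$, invoke Corollary~\ref{coro:reverseRoll} for injectivity, and then bound $|e_{n+1}-e_n|$ from below via the secant formula. The only cosmetic difference is that the paper expands $f'$ one order further, controlling the remainder via a uniform bound on $|f^{(m+1)}|$ rather than via continuity of $f^{(m)}$ as you do.
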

\begin{proof}
  Theorem~\ref{thm:taylor}
   and $j=0, 1, \ldots, m-1,\ f^{(j)}(0)=0$ imply that
   for any $x\in\calB$, there exists
   $\xi_x \in \range(0, x) \subseteq \calB$ such that
  \begin{equation*}
    \renewcommand{\arraystretch}{1.2}
    \begin{array}{ll}
      f'(x) = x^{m-1}\left(\frac{f^{(m)}(0)}{(m-1)!}
              + \frac{f^{(m+1)}(\xi_x)}{m!} x  \right).
    \end{array}
  \end{equation*}
  Since $f(x) \in \calC^{m+1}(\calB)$, it follows that
   for $i=0, 1, \ldots, m+1$, $f^{(i)}(x)$ is continuous on $\calB$.
  Combined with the fact that $\calB$ is a closed interval,
   the absolute value of $f^{(i)}(x)$ has a finite nonzero upper bound
  \begin{equation}
      \label{eq:MfmPrimeForm}
      \forall i=0, 1, \ldots, m+1, \quad
      M_{f, \calB}^{(i)} := \max_{x\in \calB}
       \left| f^{(i)}(x)\right| + 1.
  \end{equation}
  Thus
  \[
    \renewcommand{\arraystretch}{1.2}
    \begin{array}{ll}
      \forall x\in\calB,\
      \left| f^{(m+1)}(\xi_x)\right| < M_{f, \calB}^{(m+1)}
       \Longrightarrow 
       \forall x\in\calB,\
      1 - \frac{|f^{(m+1)}(\xi_x)|}{ M_{f, \calB}^{(m+1)}} > 0.
    \end{array}
  \]
  Since $ f^{(m)}(0) \neq 0$, 
   set $\delta_0 = 
   \min\left(\frac{m\left|f^{(m)}(0)\right|}{
   M_{f, \calB}^{(m+1)}}, \delta\right) \in (0,\delta]$.
  Then for any $x \in (0, \delta_0)$, by the triangle inequality, 
  \begin{displaymath}
  \renewcommand{\arraystretch}{1.5}
    \begin{array}{ll}
      &\left|\frac{f^{(m)}(0)}{(m-1)!}
             + \frac{f^{(m+1)}(\xi_x)}{m!} x  \right| 
       \ge \left|\frac{f^{(m)}(0)}{(m-1)!}  \right|
           - \left|\frac{f^{(m+1)}(\xi_x)}{m!} x  \right| 
      \ge \left|\frac{f^{(m)}(0)}{(m-1)!}  \right|
           - \left|\frac{f^{(m+1)}(\xi_x)}{m!} \right| \delta_0
    \\ & \hspace{3.46cm}
      \ge \frac{|f^{(m)}(0)|}{(m-1)!}
           - \frac{|f^{(m+1)}(\xi_x)|\cdot|f^{(m)}(0)|}
                  {M_{f, \calB}^{(m+1)}(m-1)!}  
    \\ & \hspace{3.46cm}
      = \frac{|f^{(m)}(0)|}{(m-1)!} 
          \cdot \left( 1 - \frac{|f^{(m+1)}(\xi_x)|}{
           M_{f, \calB}^{(m+1)}}\right)
      > 0,
    \end{array}
  \end{displaymath}
   which gives $|f'(x)| = |x|^{m-1}\left|\frac{f^{(m)}(0)}{(m-1)!}
   + \frac{f^{(m+1)}(\xi_x)}{m!} x \right| > 0$. 

  For $e_{n-1} \neq e_n$ satisfying
   $e_{n-1}, e_n \in (0, \delta_0) \subseteq [0, \delta_0]$,
   by Corollary~\ref{coro:reverseRoll}, combined with the condition that
   for any $x \in (0, \delta_0),\ f'(x) \neq 0$,
   we have $f(e_{n-1}) \neq f(e_n)$.  
  Hence, $0 < |f(e_{n-1}) - f(e_n)| < 2 M_{f,\calB}^{(0)}$,  
   where $M_{f,\calB}^{(0)}$ is defined as in \eqref{eq:MfmPrimeForm}.
  Similarly, for $e_n \neq 0$ satisfying
   $e_n, 0 \in [0, \delta_0]$, we have $f(e_n) \neq f(0) = 0$.  
  According to the secant method recurrence formula
   \eqref{eq:secantMethod} and $e_{n-1} \neq e_n$, we obtain
  \[
    \renewcommand{\arraystretch}{1.2}
    \begin{array}{ll}
      |e_{n+1} - e_n| = \left|\frac{f(e_n)}{f(e_{n-1}) - f(e_{n})}
                              (e_{n-1} - e_n)\right|
                      > \frac{|f(e_n)|}{2  M_{f,\calB}^{(0)}} |e_{n-1} - e_n| 
                      > 0.
    \end{array}
  \]
\end{proof}

Next, we prove the convergence of $\{e_n\}$.
\begin{theorem}
  \label{thm:mEnConvergence}
  Consider a function $f(x) \in \calC^{m+1}(\calB)$ with $m\ge 2$,
   and $0$ is an $m$-fold root of $f$.
  If the initial values $k_0$ and $e_0$ are chosen such that 
   $k_0 \in (0, 1)\cup (1, +\infty)$ and
   $e_0 > 0$ is sufficiently small,
   then the secant method error $e_n > 0$, and $\{e_n\}$ converges to $0$.
\end{theorem}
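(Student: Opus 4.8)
The plan is to pass from the two-step secant recursion to a one-step recursion for the ratios $k_n=e_{n+1}/e_n$ of \eqref{eq:knForm}, analyze the scalar map governing the leading-order dynamics of $k_n$, and then absorb the higher-order Taylor remainder as a controlled perturbation. By Theorem~\ref{thm:taylor} and $f^{(j)}(0)=0$ for $j=0,\dots,m-1$, one has $f(x)=Cx^m\bigl(1+O(x)\bigr)$ with $C:=f^{(m)}(0)/m!\neq 0$. Substituting this into \eqref{eq:secantMethod}, dividing by $e_n$, and using $e_n=k_{n-1}e_{n-1}$ together with $e_{n-1}-e_n=e_{n-1}(1-k_{n-1})$ and $e_{n-1}^m-e_n^m=e_{n-1}^m(1-k_{n-1}^m)$, a short computation gives
\[
  k_n \;=\; 1-\frac{(1-k_{n-1})\,k_{n-1}^{\,m-1}+O(e_{n-1})}{(1-k_{n-1}^m)+O(e_{n-1})}.
\]
The unperturbed map is $\phi(k):=1-\dfrac{k^{m-1}}{1+k+\cdots+k^{m-1}}=1-\bigl(1+\sum_{i=1}^{m-1}k^{-i}\bigr)^{-1}$; its fixed points in $(0,1)$ are exactly the roots of $p_m$ from Lemma~\ref{lem:characterRoots0}, that is $c_{m,0}$, although for the present statement the fixed point itself will not be needed.

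Next I would record the elementary behaviour of $\phi$ on $(0,\infty)$: from the second expression, $\phi$ is a strictly decreasing bijection of $(0,\infty)$ onto $(0,1)$ with $\phi(1)=(m-1)/m$. Hence $\phi$ maps $(0,1)$ into $((m-1)/m,1)$ and maps $((m-1)/m,1)$ into $((m-1)/m,b_1)$, where $b_1:=\phi((m-1)/m)\in((m-1)/m,1)$. Consequently, fixing $\sigma>0$ small enough that $b_1+\sigma<1$, the compact interval $\widehat J:=[(m-1)/m,\,b_1+\sigma]\subset(0,1)$ is mapped by $\phi$ into $((m-1)/m,\,b_1]$, hence into its own interior with a uniform margin $\eta:=\min\{\sigma,\ \phi(b_1+\sigma)-(m-1)/m\}>0$; moreover, for \emph{any} fixed $k_0>0$ we get $\phi^3(k_0)\in((m-1)/m,b_1)\subset\widehat J$, and $e_n\le(b_1+\sigma)e_{n-1}<e_{n-1}$ whenever $k_{n-1}\in\widehat J$.

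Then comes the perturbation control. Writing the recursion as $k_n=1-\frac{A(k_{n-1})+O(e_{n-1})}{B(k_{n-1})+O(e_{n-1})}$ with $A(k)=(1-k)k^{m-1}$ and $B(k)=1-k^m$, Lemma~\ref{lem:epsLimit}, applied with $\mathcal S=\mathcal T$ any compact subset of $(0,1)\cup(1,\infty)$ (on which $|B|=|1-k^m|$ is bounded below and $|A/B|$ is bounded), shows that for every $\varepsilon>0$ there is $\delta>0$ such that $e_{n-1}<\delta$ forces $|k_n-\phi(k_{n-1})|<\varepsilon$. The proof is then a two-phase induction. In the \emph{bootstrap phase} one chooses $e_0$ small (allowed to depend on $k_0,m,f$) so that $e_0,e_1,e_2,e_3\in(0,\delta_0)$ (so Lemma~\ref{lem:enNoteq} keeps the iteration well defined with $e_n\neq e_{n+1}$ and $e_n>0$) and, by continuity of $\phi$ and the above estimate used at steps $1,2,3$ on fixed compact sets containing $k_0,\phi(k_0),\phi^2(k_0)$ (none equal to $1$, since $k_0\neq1$), the iterates stay so close to $k_0,\phi(k_0),\phi^2(k_0),\phi^3(k_0)$ that $k_3\in\widehat J$. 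In the \emph{main phase} one shows by induction that $k_n\in\widehat J$ and $e_n$ stays below the $\delta$ attached to $\varepsilon=\eta$: from $k_n\in\widehat J$ one gets $e_{n+1}=k_ne_n\le(b_1+\sigma)e_n<e_n$, while $|k_{n+1}-\phi(k_n)|<\eta$ and $\phi(k_n)\in\phi(\widehat J)$ lies at distance $\ge\eta$ from $\partial\widehat J$, so $k_{n+1}\in\widehat J$. Hence $k_n\le b_1+\sigma<1$ for all large $n$, and with $e_n>0$ this gives $0<e_n\le e_N(b_1+\sigma)^{\,n-N}\to0$.

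The main obstacle is the circularity in this induction: controlling the $O(e_{n-1})$ remainder uniformly requires $k_{n-1}$ to lie in a set on which $|1-k^m|$ (and $k$ itself) stays away from $0$, yet keeping $k_n$ in such a set requires the remainder to be small. Lemma~\ref{lem:epsLimit} is precisely the device that decouples these, but one still has to thread the bootstrap so that the trajectory actually enters the forward-invariant window $\widehat J$ before the error-budget argument can take over; a secondary, purely elementary chore is verifying the monotonicity and mapping properties of $\phi$ on $(0,1)$.
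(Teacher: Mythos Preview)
Your argument is correct but takes a genuinely different route from the paper's. The paper does \emph{not} analyse the one-dimensional map $\phi=h_m$ here; instead it rewrites \eqref{eq:knRecm} as
\[
  k_n \;=\; e_{n-1}\,\frac{(f/x)'(t_n)}{f'(t_n)} \;=\; e_{n-1}\,p(t_n),\qquad t_n\in\range(e_{n-1},e_n),
\]
via Cauchy's mean value theorem (Theorem~\ref{thm:cauchyMid}), expands $p(x)=\frac{xf'(x)-f(x)}{x^2 f'(x)}=\frac{m-1}{mx}\cdot\frac{1+O(x)}{1+O(x)}$ and shows $p'<0$ on the relevant range. This yields directly, with $\epsilon=\tfrac{1}{2m}$, the two-sided inequalities $\frac{e_{n+1}}{e_n}\in\bigl(\tfrac{m-1}{m}(1-\epsilon)\min(1,\tfrac{e_{n-1}}{e_n}),\tfrac{m-1}{m}(1+\epsilon)\max(1,\tfrac{e_{n-1}}{e_n})\bigr)$ and the analogous one for $e_{n+1}/e_{n-1}$, whence $0<e_{n+1}<\mu\max(e_{n-1},e_n)$ with $\mu=\tfrac{2m^2-m-1}{2m^2}<1$. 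The advantage of the paper's route is that the contraction factor $\mu$ is \emph{independent of $k_0$}: no bootstrap is needed, and the induction closes immediately once $e_0,e_1\in(0,\eta)$ for a single small $\eta$. Your dynamical approach trades this uniformity for a more transparent picture of why the iteration converges (the forward-invariant window $\widehat J$), and it is essentially the mechanism the paper deploys later in Lemma~\ref{lem:knRange} and Theorem~\ref{thm:EnLinearConvergence} to identify the asymptotic error constant; you are in effect merging the convergence proof with that later analysis. One small slip: Lemma~\ref{lem:enNoteq} gives only $e_{n+1}\neq e_n$, not $e_{n+1}>0$; positivity in your bootstrap comes instead from $k_n>0$, which you obtain from $|k_n-\phi(k_{n-1})|$ small and $\phi(k_{n-1})>0$.
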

\begin{proof} 
  Algebraic manipulation of the secant method recurrence formula
   \eqref{eq:secantMethod} yields 
  \begin{displaymath}
    \renewcommand{\arraystretch}{1.7}
    \begin{array}{ll}
      &e_{n+1} 
      = e_n - f(e_n)\frac{e_{n-1} - e_{n}}{f(e_{n-1}) - f(e_{n})} 
      = e_{n}\frac{f(e_{n-1}) - f(e_n) - \frac{f(e_n)(e_{n-1} - e_{n})}{e_n}}
      {f(e_{n-1}) - f(e_n)} 
      \\ & \hspace{0.8cm}
      = e_n \frac{f(e_{n-1}) - \frac{e_{n-1}}{e_n} f(e_n)}
      {f(e_{n-1}) - f(e_n)}.
    \end{array}
  \end{displaymath}
  Hence \eqref{eq:knForm} gives
  \begin{equation}
    \label{eq:knRecm}
    \renewcommand{\arraystretch}{1.2}
    \begin{array}{ll}
      k_n = \frac{f(e_{n-1}) - \frac{f(e_{n-1}k_{n-1})}{k_{n-1}}}
                 {f(e_{n-1}) - f(e_{n-1}k_{n-1})}
          = e_{n-1} \frac{\frac{f(e_{n-1})}{e_{n-1}} - \frac{f(e_{n})}{e_{n}}}
    {f(e_{n-1}) - f(e_{n})}.
    \end{array}
  \end{equation}
  By Theorem~\ref{thm:taylor} and the fact that
   for $j = 0, 1, \ldots, m-1,\ f^{(j)}(0) = 0$, we have 
  \begin{align}
    \label{eq:ftaylor}
    f(x) &= x^{m}\left(\tfrac{f^{(m)}(0)}{m!} + O(x)\right),\\
    \label{eq:fprimetaylor}
    f'(x) &= x^{m-1}\left(\tfrac{f^{(m)}(0)}{(m-1)!} + O(x)\right),\\
    \label{eq:fsecondprimetaylor}
    f''(x) &= x^{m-2}\left(\tfrac{f^{(m)}(0)}{(m-2)!} + O(x)\right),
  \end{align}
   where each $O(x)$ term satisfies the existence of a constant
   $ M_{f, \calB}^{(m+1)} > 0$ as defined in \eqref{eq:MfmPrimeForm}
   such that $|O(x)| \leq M_{f, \calB}^{(m+1)} |x|$.

  Since $k_0 > 0$, we have $e_1 = k_0 e_0 > 0$.  
  Note that for $\ell \in \{0, 1\}$, we have
   $0 < e_\ell \le \eta := \max(e_0, e_1) = \max(e_0, k_0 e_0) = O(e_0)$.
  Since $|e_0|$ is assumed to be sufficiently small,
   it follows that $\eta$ is also sufficiently small.
  Hence, we let $\eta \in (0, \delta_0)$,
   where $\delta_0$ is defined as in Lemma~\ref{lem:enNoteq}.  
  Moreover, $k_0 \neq 1$ implies $e_0 \neq e_1$.
  We claim that 
  \begin{equation}
    \label{eq:enRange}
    \forall \ell \in \bbN,\ e_\ell \in (0, \eta], \text{ and } 
    \forall \ell \in \bbN^+,\ e_{\ell-1} \neq e_{\ell}.
  \end{equation}

  The claim holds for $\ell = 1$ and $e_0 \in (0, \eta]$.
  Assume it holds for indices up to $n \geq 1$; that is,
   $e_{n-1}, e_n \in (0, \eta] \subseteq (0, \delta_0)$ and $e_{n-1} \neq e_n$.
  Then Lemma~\ref{lem:enNoteq} ensures that for all
   $x \in (0, \delta_0),\ f'(x) \neq 0$ and $e_{n} \neq e_{n+1}$.
  Thus, it suffices to show $e_{n+1} \in (0, \eta]$.
  By Theorem~\ref{thm:cauchyMid} and \eqref{eq:knRecm},
   there exists $t_n \in \range(e_{n}, e_{n-1})$ such that
  \begin{equation}
    \label{eq:knAndptn}
    \renewcommand{\arraystretch}{1.2}
    \begin{array}{ll}
        k_n 
      = \frac{e_{n+1}}{e_n} 
      = e_{n-1} \frac{\left(\frac{f}{x}\right)'(t_n)}{f'(t_n)}
      = e_{n-1} \frac{t_nf'(t_n) - f(t_n)}{t_n^2 f'(t_n)} = e_{n-1} p(t_n),
    \end{array}
  \end{equation}
   where $p(x) := \frac{x f'(x) - f(x)}{x^2f'(x)}$.
  Substituting \eqref{eq:ftaylor} and \eqref{eq:fprimetaylor} into $p(x)$
   yields
  \begin{equation*}
    \renewcommand{\arraystretch}{1.2}
    \begin{array}{ll}
        p(x) 
      = \frac{m x^m f^{(m)}(0) - x^m f^{(m)}(0) + O(x^{m+1})}
             {m x^{m+1}f^{(m)}(0) + O(x^{m+2})} 
      = \frac{m-1}{mx} \cdot \frac{1 + O(x)}{1 + O(x)}.
    \end{array}
  \end{equation*}
  Lemma~\ref{lem:epsLimit} shows that for
  \[
    \renewcommand{\arraystretch}{1.2}
    \begin{array}{ll}
      \epsilon = \frac{1}{2m} > 0,\ 
      \exists \delta_1 > 0 \st \forall |x| < \delta_1,
      \quad \left|\frac{1 + O(x)}{1 + O(x)} - 1\right|<\epsilon.
    \end{array}
  \]
  Combined with $t_n \in \range(e_{n-1}, e_n) \subseteq (0, \eta)$ 
   and sufficiently
   small $\eta > 0$ such that $|t_n| < \eta < \delta_1$, we obtain
   $\left|\frac{1 + O(t_n)}{1 + O(t_n)}-1\right|<\epsilon$. 
  Therefore
  \begin{equation}
    \label{eq:ptnRange}
    \renewcommand{\arraystretch}{1.2}
    \begin{array}{ll}
      p(t_n) \in \left(\frac{m-1}{mt_n}(1 - \epsilon),  
                       \frac{m-1}{mt_n}(1 + \epsilon)\right). 
    \end{array}
  \end{equation}
  Taking the derivative of $p(x)$ gives
  \begin{displaymath}
    \renewcommand{\arraystretch}{1.3}
    \begin{array}{ll}
      &p'(x) 
      = \frac{x^3f'(x)f''(x) -2x^2f'(x)f'(x) + 2xf(x)f'(x) 
                   - x^3f'(x)f''(x) + x^2f(x)f''(x)}{(x^2 f'(x))^2}
      \\ & \hspace{.85cm}
      = \frac{-2x(f'(x))^2+2f(x)f'(x)+xf(x)f''(x)}{x^3(f'(x))^2}.
    \end{array}
  \end{displaymath}
  Substituting \eqref{eq:ftaylor}, \eqref{eq:fprimetaylor}, and
   \eqref{eq:fsecondprimetaylor} into $p'(x)$ yields
  \begin{displaymath}
    \renewcommand{\arraystretch}{1.3}
    \begin{array}{ll}
      &p'(x) 
      = \frac{(-2m^2 x^{2m-1} + 2m x^{2m-1}+m(m-1)x^{2m-1})(f^{(m)}(0))^2 
        + O(x^{2m})}
             {m^2 x^{2m+1}(f^{(m)}(0))^2 + O(x^{2m+2})} 
      \\ & \hspace{.85cm}
      = \frac{1}{x^2}\cdot\frac{1-m+O(x)}{m+O(x)}.
    \end{array}
  \end{displaymath}
  Lemma~\ref{lem:epsLimit} shows that for
  \[
    \renewcommand{\arraystretch}{1.2}
    \begin{array}{ll}
      \epsilon=\frac{1}{2m}>0,\ \exists \delta_2>0 \st \forall |x|<\delta_2,
      \quad \left|\frac{1-m+O(x)}{m+O(x)}-\frac{1-m}{m}\right|<\epsilon.
    \end{array}
  \]
  For $x \in \range(e_{n-1}, e_n) \subseteq (0, \eta)$,
   since $\eta > 0$ is sufficiently small,
   it follows that $|x|< \eta <\delta_2$ and
  \[
    \renewcommand{\arraystretch}{1.2}
    \begin{array}{ll}
      p'(x) < \frac{1}{x^2}\left(\frac{1-m}{m}+\epsilon\right)
            = \frac{1}{x^2}\cdot\frac{3-2m}{2m}<0.
    \end{array}
  \]
  Hence $p(x)$ is strictly monotonically decreasing on
   $\range(e_{n-1}, e_{n})$,
   which implies $p(t_n) \in \range (p(e_{n-1}), p(e_{n}))$. 
  Furthermore, together with \eqref{eq:ptnRange}, we have
  \begin{align*}
    \renewcommand{\arraystretch}{1.2}
    \begin{array}{ll}
      p(t_n) \in 
      \left(
        \frac{m-1}{m}(1 - \epsilon)
        \min\left(\frac{1}{e_{n-1}}, \frac{1}{e_n} \right),
        \frac{m-1}{m}(1 + \epsilon)
        \max\left(\frac{1}{e_{n-1}}, \frac{1}{e_n} \right)
      \right).
    \end{array}
  \end{align*}
  Therefore, \eqref{eq:knAndptn} gives 
  \begin{subequations}
    \begin{gather}
    \label{eq:knRange}
    \tfrac{e_{n+1}}{e_n} 
    \in
    \left(\tfrac{m-1}{m}(1 - \epsilon)
          \min\left( 1, \tfrac{e_{n-1}}{e_n}\right),
          \tfrac{m-1}{m}(1 + \epsilon)
          \max\left( 1, \tfrac{e_{n-1}}{e_n} 
    \right)
    \right),\\
    \label{eq:kknRange}
    \tfrac{e_{n+1}}{e_{n-1}} 
    \in 
    \left(\tfrac{m-1}{m}(1 - \epsilon)
          \min\left( \tfrac{e_n}{e_{n-1}}, 1 \right),
          \tfrac{m-1}{m}(1 + \epsilon)
          \max\left( \tfrac{e_n}{e_{n-1}}, 1 \right)
    \right).
    \end{gather}
  \end{subequations}
  Let $\mu:=\frac{m-1}{m}(1+\epsilon)$.
  When $e_n > e_{n-1}$, \eqref{eq:knRange} shows that 
  \[
    \renewcommand{\arraystretch}{1.2}
    \begin{array}{ll}
      \frac{e_{n+1}}{e_n} \in 
      \left( \frac{m-1}{m} \frac{e_{n-1}}{e_n} (1 - \epsilon),
      \frac{m-1}{m} (1 + \epsilon) \right),
    \end{array}
  \]
   which implies $ 0 < e_{n+1} < \mu e_n \le \mu \max(e_{n-1}, e_n)$.
  Similarly, when $e_n < e_{n-1}$, by \eqref{eq:kknRange}
   we get $ 0 < e_{n + 1} < \mu e_{n - 1} \le \mu \max(e_{n - 1}, e_n)$.
  Hence, it always holds that
   $0 < e_{n+1} < \mu \max(e_{n-1}, e_{n}) \le \mu \eta$.
  Note that for $m \ge 2$, we have
  \[ 
    \renewcommand{\arraystretch}{1.2}
    \begin{array}{ll}
      \mu = \frac{m-1}{m}(1+\epsilon) = \frac{2m^2-m-1}{2m^2} \in (0,1),
    \end{array}
  \]
   which yields $e_{n+1} \in (0, \eta]$.
  Thus, the induction step is completed, and we obtain
   $0 < e_{n+1} < \mu\max(e_{n-1}, e_{n}) $.
  It follows that
  \begin{align*}
    e_{2n}   &< \mu\max(e_{2n-2}, e_{2n-1}), \\
    e_{2n+1} &< \mu\max(e_{2n-1}, e_{2n}) 
              \leq \mu \max(e_{2n-2}, e_{2n-1}),
  \end{align*}
   which means $\max(e_{2n},e_{2n+1})\le \mu\max(e_{2n-2},e_{2n-1})$.
  By mathematical induction, we get
   $\max(e_{2n}, e_{2n+1}) \le \mu^n \max(e_0, e_1) = \mu^n \eta$. 
  Therefore
  \[
        \lim\limits_{n\to+\infty} e_{2n} 
    \le \lim\limits_{n\to+\infty} \mu^n \eta = 0, \quad
        \lim\limits_{n\to+\infty} e_{2n+1} 
    \le \lim\limits_{n\to+\infty} \mu^n \eta = 0.
  \]
  Combined with $e_n>0$, we conclude that $\{e_n\}$ converges to $0$.
\end{proof}

When the subscript $\ell$ is sufficiently large,
 the following lemma shows that 
 $k_\ell$ belongs to some closed subinterval of $(0, 1)$.

\begin{lemma}
  \label{lem:knRange}
  Consider a function $f(x) \in \calC^{m+1}(\calB)$ with $m \ge 2$,
   and $0$ is an $m$-fold root of $f$.
  If the initial values $k_0$ and $e_0$ are chosen such that 
   $k_0 \in (0, 1)\cup (1, +\infty)$ and
   $e_0 > 0$ is sufficiently small,
   then there exist $N > 0$ and $r \in \left(\frac{2m-3}{2m}, 1\right)$ 
   such that for any $\ell > N$, it holds 
   that $ k_\ell \in  \left[\frac{2m-3}{2m}, r\right]$, 
   where $k_{\ell}$ is as defined in \eqref{eq:knForm}.
\end{lemma}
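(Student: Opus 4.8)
The plan is to combine the convergence result of Theorem~\ref{thm:mEnConvergence} with the sharper two-step bounds \eqref{eq:knRange}–\eqref{eq:kknRange} on the ratios, and then to pass to the limit in the recurrence \eqref{eq:knAndptn} together with the representation $p(t_n) = \frac{m-1}{mt_n}\cdot\frac{1+O(t_n)}{1+O(t_n)}$ obtained in the proof of Theorem~\ref{thm:mEnConvergence}. First I would record that, under the stated hypotheses on $k_0$ and $e_0$, Theorem~\ref{thm:mEnConvergence} already gives $e_n>0$ for all $n$ and $e_n\to 0$; moreover the induction in that proof shows $e_{n-1}\neq e_n$ for all $n\ge 1$, so $k_n = e_{n-1}p(t_n)$ with $t_n\in\range(e_{n-1},e_n)\subseteq(0,\eta)$ is well-defined for all $n$.

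The key step is to extract a uniform \emph{lower} bound on $k_\ell$ of the form $k_\ell\ge\frac{2m-3}{2m}$. From \eqref{eq:knAndptn} and \eqref{eq:ptnRange} with $\epsilon=\frac{1}{2m}$ we have, for $n$ large enough that $\eta<\min(\delta_1,\delta_2)$,
\begin{equation*}
  \renewcommand{\arraystretch}{1.2}
  \begin{array}{ll}
    k_n = e_{n-1}p(t_n) \in \left(\frac{m-1}{m}(1-\epsilon)\,\frac{e_{n-1}}{t_n},\ \frac{m-1}{m}(1+\epsilon)\,\frac{e_{n-1}}{t_n}\right),
  \end{array}
\end{equation*}
and since $t_n\in\range(e_{n-1},e_n)$ one has $\frac{e_{n-1}}{t_n}\in\range\!\left(1,\tfrac{e_{n-1}}{e_n}\right)$. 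Here the argument splits on whether $e_n\le e_{n-1}$ (then $\frac{e_{n-1}}{t_n}\ge 1$, giving immediately $k_n>\frac{m-1}{m}(1-\epsilon)=\frac{2m-3}{2m}\cdot\frac{m}{2m\cdot? }$ — I would simplify $\frac{m-1}{m}\cdot\frac{2m-1}{2m}$ and check it exceeds $\frac{2m-3}{2m}$) or $e_n>e_{n-1}$. In the latter case I instead use the companion bound \eqref{eq:kknRange} for $\frac{e_n}{e_{n-1}}=k_{n-1}$: since we already control $k_{n-1}$ from below by the same argument one index earlier, an interlocking/simultaneous induction on the pair $(k_{n-1},k_n)$ yields a uniform lower bound $k_\ell\ge\frac{2m-3}{2m}$ for all $\ell$ past some threshold. (The threshold is where $\eta$ first enters the good neighborhood; since $e_n\to 0$, such a threshold $N_0$ exists.)

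For the \emph{upper} bound $k_\ell\le r<1$ I would argue that $\mu=\frac{m-1}{m}(1+\epsilon)=\frac{2m^2-m-1}{2m^2}\in(0,1)$ already bounds $\frac{e_{n+1}}{\max(e_{n-1},e_n)}$ from above by the induction in Theorem~\ref{thm:mEnConvergence}; combining this with the lower bound on the \emph{previous} ratio $k_{n-1}\ge\frac{2m-3}{2m}$ controls how large $\frac{\max(e_{n-1},e_n)}{e_n}$ can be, hence bounds $k_n=\frac{e_{n+1}}{e_n}$ above by an explicit constant $r\in\left(\frac{2m-3}{2m},1\right)$. Concretely, when $e_n\ge e_{n-1}$ we get $k_n\le\mu<1$ directly; when $e_n<e_{n-1}$ we write $k_n=\frac{e_{n+1}}{e_{n-1}}\cdot\frac{e_{n-1}}{e_n}\le\mu\cdot\frac{1}{k_{n-1}}\le\mu\cdot\frac{2m}{2m-3}$, and I would check this product is still $<1$ for $m\ge 2$ (it equals $\frac{2m^2-m-1}{2m^2}\cdot\frac{2m}{2m-3}=\frac{(2m+1)(m-1)}{m(2m-3)}$, which needs verification — if it fails for small $m$ one sharpens $\epsilon$ or iterates the estimate once more to damp the bad factor). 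Taking $N=\max(N_0,\text{(index where }\eta<\min(\delta_1,\delta_2)\text{)})$ and $r$ the resulting constant completes the proof.

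The main obstacle I anticipate is precisely this last algebraic balancing: the crude bound $k_n\le\mu/k_{n-1}$ may not be below $1$ for the smallest admissible $m$, so one likely needs either a smaller choice of $\epsilon$ in \eqref{eq:ptnRange} (permissible, since $\epsilon$ there is a free small parameter), or a two-step rather than one-step chaining of \eqref{eq:knRange}–\eqref{eq:kknRange} — using that both $e_{n}/e_{n+1}$ and $e_{n-1}/e_n$ are eventually bounded away from the extremes — to get a contraction that genuinely lands inside $\left[\frac{2m-3}{2m},1\right)$. Everything else is bookkeeping with the already-established Taylor expansions and Lemma~\ref{lem:epsLimit}.
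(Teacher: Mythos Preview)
Your lower-bound argument is essentially sound: once some $k_N<1$ (which follows from $e_n\to 0$), the relation $k_{N+1}=e_N\,p(t_{N+1})$ with $t_{N+1}\in(e_{N+1},e_N)$ and $p(t)\ge\frac{m-1}{mt}(1-\epsilon)$ gives $k_{N+1}>\frac{m-1}{m}(1-\epsilon)=\frac{(m-1)(2m-1)}{2m^2}>\frac{2m-3}{2m}$.

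The genuine gap is the upper bound $k_\ell\le r<1$. The estimates \eqref{eq:knRange}--\eqref{eq:kknRange}, or equivalently $p(t_n)\in\bigl(\frac{m-1}{mt_n}(1-\epsilon),\frac{m-1}{mt_n}(1+\epsilon)\bigr)$ together with $t_n\in\range(e_{n-1},e_n)$, are too lossy to close an induction here. When $k_{n-1}<1$ the best they yield is
\[
  k_n<\tfrac{m-1}{m}(1+\epsilon)\cdot\tfrac{1}{k_{n-1}},
\]
and feeding in the lower bound $k_{n-1}>\frac{m-1}{m}(1-\epsilon)$ only gives $k_n<\frac{1+\epsilon}{1-\epsilon}$, which exceeds $1$ for every $\epsilon>0$. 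Shrinking $\epsilon$ does not help, and the two-step chaining you sketch runs into the same wall: you can produce oscillating bounds straddling $1$ but cannot trap $k_n$ in a compact subinterval of $(0,1)$. Your computation $\mu\cdot\frac{2m}{2m-3}=\frac{(2m+1)(m-1)}{m(2m-3)}$ is indeed $>1$ for all $m\ge 2$ (it equals $\tfrac52$ at $m=2$), confirming the obstruction.

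What is missing is a sharper identity for $1-k_{n+1}$ rather than for $k_{n+1}$ itself. The paper rewrites
\[
  1-k_{n+1}=\frac{\bigl(\tfrac{1}{k_n}-1\bigr)f(e_nk_n)}{f(e_n)-f(e_nk_n)}
\]
and, applying the mean value theorem to the denominator together with Taylor expansions, obtains
\[
  1-k_{n+1}=\Bigl(\tfrac{k_n}{\hat{k}_n}\Bigr)^{m-1}\cdot\frac{1+O(e_n)}{m+O(e_n)},\qquad \hat{k}_n\in(k_n,1).
\]
The point is that $\bigl(\tfrac{k_n}{\hat{k}_n}\bigr)^{m-1}\in(0,1)$ automatically whenever $0<k_n<1$, regardless of how small $k_n$ is. With $\epsilon=\frac{1}{2m}$ this gives $0<1-k_{n+1}<\frac{3}{2m}$, i.e.\ $k_{n+1}\in\bigl(\frac{2m-3}{2m},1\bigr)$, in one stroke. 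The induction then closes with $r=\max\bigl(k_{N+1},\,1-\frac{(2m-3)^{m-1}}{(2m)^m}\bigr)$, since once $k_n\ge\frac{2m-3}{2m}$ one has $\bigl(\tfrac{k_n}{\hat{k}_n}\bigr)^{m-1}>\bigl(\tfrac{2m-3}{2m}\bigr)^{m-1}$ and hence $k_{n+1}\le r$. This identity, not a refinement of the $p(t_n)$ bounds, is the missing idea.
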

\begin{proof}
  By Theorem~\ref{thm:mEnConvergence}, 
    we know that $e_n > 0$ and $e_n \to 0$,
    hence $k_n > 0$ for all $n \in \bbN$. 
  Furthermore, there must exist some
   $N \in \bbN$ such that $k_N < 1$.
  Indeed, if $k_n \ge 1$ for all $n \in \bbN$, 
   then $e_{n+1} \ge e_n$,
   so $\{e_n\}$ would be non-decreasing and bounded below by $e_0 > 0$,
   contradicting $e_n \to 0$.
  Substituting $n = N + 1$ into \eqref{eq:knRecm}, we obtain 
  \[
    \renewcommand{\arraystretch}{1.2}
    \begin{array}{ll}
        k_{N+1} 
      = \frac{f(e_{N}) - \frac{f(e_{N}k_{N})}{k_{N}}}{f(e_{N}) - f(e_{N}k_{N})}
      = 1 + \frac{f(e_{N}k_{N}) 
        - \frac{f(e_{N}k_{N})}{k_{N}}}{f(e_{N}) - f(e_{N}k_{N})}. 
    \end{array}
  \]
  Therefore, there exist
   $\hat{k}_N \in (k_N, 1)$, $\xi_N \in (0, e_N k_N)$, 
   and $\zeta_N \in (0, e_N \hat{k}_N)$
   such that
  \begin{displaymath}
    \renewcommand{\arraystretch}{1.7}
    \begin{array}{ll}
      &1 - k_{N+1} 
      = \frac{\frac{f(e_{N} k_{N})}{k_{N}} - f(e_{N} k_{N})}
              {f(e_{N}) - f(e_{N} k_{N})}
      = \frac{\frac{1}{k_N}(1 - k_N) f(e_N k_N)}
              {(1 - k_N) e_N f'(e_N \hat{k}_N)} 
      \\ & \hspace{1.52cm}
      = \frac{1}{k_Ne_N} \cdot 
         \frac{\frac{f^{(m)}(0)}{m!}(k_Ne_N)^m 
               + \frac{f^{(m+1)}(\xi_N)}{(m+1)!}(k_Ne_N)^{m+1} }
              {\frac{f^{(m)}(0)}{(m-1)!}(\hat{k}_N e_N)^{m-1} 
               + \frac{f^{(m+1)}(\zeta_N)}{m!}(\hat{k}_Ne_N)^{m} }  
      \\ & \hspace{1.52cm}
      = \left( \frac{k_N}{\hat{k}_N} \right)^{m-1} 
          \frac{1 + k_N \frac{f^{(m+1)}(\xi_N)}{(m+1)f^{(m)}(0)} e_N}
               {m + \hat{k}_N \frac{f^{(m+1)}(\zeta_N)}{f^{(m)}(0)} e_N}
      = \left( \frac{k_N}{\hat{k}_N} \right)^{m-1} 
        \frac{1 + O(e_N)}{m  + O(e_N)},
    \end{array}
  \end{displaymath}
   where the second step follows from
   $f(e_N) - f(e_N k_N) = (e_N - e_N k_N) f'(e_N \hat{k}_N)$;
   the third step holds because $k_N < 1$ allows
   canceling the nonzero common factor $1 - k_N$
   from the numerator and denominator,
   followed by applying the Taylor expansion of $f(e_N k_N)$
   and $f'(e_N\hat{k}_N)$ at $0$ using Theorem \ref{thm:taylor};
   and the last step follows from 
   $\left| k_N \frac{f^{(m+1)}(\xi_N)}{(m+1)f^{(m)}(0)} \right| \le 
    \frac{M_{f, \calB}^{(m+1)}}{(m+1)|f^{(m)}(0)|}$
   and
   $\left| \hat{k}_N \frac{f^{(m+1)}(\zeta_N)}{f^{(m)}(0)} \right| \le 
    \frac{M_{f, \calB}^{(m+1)}}{|f^{(m)}(0)|}$.
  Lemma~\ref{lem:epsLimit} shows that for
  \[
    \renewcommand{\arraystretch}{1.2}
    \begin{array}{ll}
      \epsilon = \frac{1}{2m} > 0,\ \exists \delta_1 > 0 
      \st \forall |e| < \delta_1,
      \quad \left| \frac{1 + O(e)}{m + O(e)} - \frac{1}{m} \right| < \epsilon.
    \end{array}
  \]
  From \eqref{eq:enRange} and the fact 
   that $\eta$ is sufficiently small so that $|e_N| \leq \eta < \delta_1$,
   we obtain $\left|\frac{ 1 + O(e_N)}{m  + O(e_N)} - \frac{1}{m}\right| 
   < \epsilon$.
  This implies
  \begin{equation}
    \label{eq:1MinuskNBound}
    \renewcommand{\arraystretch}{1.7}
    1 - k_{N+1}\left\{
    \begin{array}{l}
      > \left( \frac{k_N}{\hat{k}_N} \right)^{m-1} 
        \left( \frac{1}{m} - \epsilon \right)
      =  \left( \frac{k_N}{\hat{k}_N}\right)^{m-1} \frac{1}{2m} 
      > 0; \\
      < \left( \frac{k_N}{\hat{k}_N} \right)^{m-1} 
        \left( \frac{1}{m} + \epsilon \right) 
      = \left( \frac{k_N}{\hat{k}_N}\right)^{m-1}\frac{3}{2m} 
      < \frac{3}{2m},
    \end{array}
    \right.
  \end{equation}
   which is equivalent to $\frac{2m-3}{2m} < k_{N+1} < 1$. 

  Let $r := \max\left( k_{N+1}, 1- \frac{(2m-3)^{m-1}}{(2m)^m} \right) 
       \in \left(\frac{2m-3}{2m}, 1\right)$. 
  We claim that for all $\ell> N$,
   $k_\ell \in \left[ \frac{2m-3}{2m}, r \right]$. 
  For $\ell=N+1$, $k_{N+1} \leq r$ gives $\frac{2m-3}{2m} \leq k_{N+1} \leq r$.
  Assume by the induction hypothesis that for $\ell = n > N$,
   $k_n \in \left[ \frac{2m-3}{2m}, r\right]$.
  Then for $\ell = n + 1$, similar to the derivation 
   of \eqref{eq:1MinuskNBound}, we have
  \[
    \begin{array}{ll}
        \left( \frac{k_n}{\hat{k}_n} \right)^{m-1} \frac{1}{2m} 
      < 1 - k_{n+1} 
      < \left( \frac{k_n}{\hat{k}_n} \right)^{m-1} \frac{3}{2m},
    \end{array}
  \]
   where $\hat{k}_n \in (k_n, 1)$.
  Thus, on the one hand,
  \[ 
    \begin{array}{ll}
      k_{n+1} > 1 - \left( \frac{k_n}{\hat{k}_n} \right)^{m-1} \frac{3}{2m}
              > 1 - \frac{3}{2m} = \frac{2m-3}{2m}.
    \end{array}
  \]
   On the other hand, $\frac{2m-3}{2m} \leq k_n < \hat{k}_n < 1$ yields
   $\frac{k_n}{\hat{k}_n}> \frac{2m-3}{2m} $ and
  \[
    \begin{array}{ll}
      k_{n+1} < 1 -  \left( \frac{k_n}{\hat{k}_n} \right)^{m-1} \frac{1}{2m}
              < 1 - \left( \frac{2m-3}{2m} \right)^{m-1} \frac{1}{2m}
              = 1- \frac{(2m-3)^{m-1}}{(2m)^m} \leq r. 
    \end{array}
  \]
  Therefore, $k_{n+1}\in \left[ \frac{2m-3}{2m}, r\right]$,
   and the induction holds.
\end{proof}

Next, we prove that $\{k_n\}$ converges. 
Motivated by fixed-point theory, 
 we analyze the recurrence~\eqref{eq:knRecm}. 
For any $\ell, n \in \bbN$, we write 
\begin{equation*}
  \renewcommand{\arraystretch}{1.7}
  \begin{array}{ll}
    &k_{\ell +1} - k_{n+1} 
    = \frac{f(e_{\ell }) - \frac{f(e_{\ell }k_{\ell })}{k_{\ell }}}
            {f(e_{\ell }) - f(e_{\ell }k_{\ell })} 
      - \frac{f(e_{n}) - \frac{f(e_{n}k_{n})}{k_{n}}}
            {f(e_{n}) - f(e_{n}k_{n})}  
    \\ & \hspace{1.94cm}
    = \left( \frac{f(e_{\ell }) - \frac{f(e_{\ell }k_{\ell })}{k_{\ell }}}
                   {f(e_{\ell }) - f(e_{\ell }k_{\ell })} 
            - \frac{f(e_{\ell }) - \frac{f(e_{\ell }k_{n})}{k_{n}}}
                    {f(e_{\ell }) - f(e_{\ell }k_{n})}  \right)
    +  \left( \frac{f(e_{\ell }) - \frac{f(e_{\ell }k_{n})}{k_{n}}}
                   {f(e_{\ell }) - f(e_{\ell }k_{n})} 
            - \frac{f(e_{n}) - \frac{f(e_{n}k_{n})}{k_{n}}}
                   {f(e_{n}) - f(e_{n}k_{n})}  \right)   
    \\ & \hspace{1.94cm}
    = (d_\ell (k_\ell ) - d_\ell (k_n)) + (g_n(e_\ell ) - g_n(e_n)),
  \end{array}
\end{equation*}
 where 
\begin{equation}
  \label{eq:dlgnForm}
  \renewcommand{\arraystretch}{1.2}
  \begin{array}{l}
    d_\ell (k) := \frac{f(e_{\ell }) - \frac{f(e_{\ell }k)}{k}}
                    {f(e_{\ell }) - f(e_{\ell }k)}, 
    \quad 
    g_n(e) := \frac{f(e) - \frac{f(ek_{n})}{k_{n}}}{f(e) - f(ek_{n})}.
  \end{array}
\end{equation}
By Theorem \ref{thm:meanValue}, there exist 
  $\hat{k}_{\ell ,n} \in \range(k_{\ell }, k_{n}), 
   \hat{e}_{\ell ,n} \in \range(e_{\ell }, e_{n})$ such that
\begin{equation}
  \label{eq:fixKn}
  k_{\ell +1} - k_{n+1} = d_\ell ' (\hat{k}_{\ell ,n}) (k_\ell  - k_{n}) 
                        + g_{n}'(\hat{e}_{\ell ,n}) (e_{\ell } - e_{n}).
\end{equation}
Furthermore, the definition of $d_\ell$ implies
\begin{equation}
  \label{eq:dlklrelation}
  \forall \ell \in \bbN,\quad k_{\ell + 1}=d_\ell (k_\ell).
\end{equation}

The following lemma shows that, for sufficiently small $e_\ell$,
 the derivative $d_\ell '(k)$ is uniformly bounded away from $1$ 
 in absolute value.
\begin{lemma}
  \label{lem:dnPrimeBound}
  Consider a function $f(x) \in \calC^{m+1}(\calB)$ with $m \ge 2$, 
   and $0$ is an $m$-fold root of $f$.
  For any fixed $r \in \left(\frac{2m-3}{2m}, 1\right)$,
   there exists $\gamma_0 \in(0, \delta)$ such that
   for all $ k \in \left[ \frac{2m-3}{2m}, r \right]$,
   $e_\ell  \in (0, \gamma_0)$, 
   the derivative $d_\ell '(k)$ satisfies
   $ |d_\ell '(k)| < \frac{m}{m + 1}$,
   where $d_{\ell}(k)$ is defined in \eqref{eq:dlgnForm}.
\end{lemma}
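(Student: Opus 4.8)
The plan is to differentiate $d_\ell$ in $k$, substitute the Taylor expansions \eqref{eq:ftaylor}--\eqref{eq:fprimetaylor}, pass to the $e_\ell\to 0$ limit uniformly via Lemma~\ref{lem:epsLimit}, and then finish with a purely algebraic estimate that produces the constant $\tfrac{m}{m+1}$. Write $d_\ell(k)=N(k)/D(k)$ with $N(k)=f(e_\ell)-\tfrac{f(e_\ell k)}{k}$ and $D(k)=f(e_\ell)-f(e_\ell k)$; for $e_\ell\in(0,\delta_0)$ and $k\in[\tfrac{2m-3}{2m},r]$ the points $e_\ell$ and $e_\ell k$ are distinct and lie in $(0,\delta_0)$, so Lemma~\ref{lem:enNoteq} makes $D(k)\neq 0$ and $d_\ell$ a $\calC^1$ function of $k$. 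The quotient rule gives $d_\ell'(k)=\tfrac{N'(k)D(k)-N(k)D'(k)}{D(k)^2}$ with $N'(k)=\tfrac{f(e_\ell k)-e_\ell k f'(e_\ell k)}{k^2}$ and $D'(k)=-e_\ell f'(e_\ell k)$, so only $f$ and $f'$ enter and \eqref{eq:ftaylor}--\eqref{eq:fprimetaylor} apply. Because $k$ ranges over the compact set $[\tfrac{2m-3}{2m},r]\subset(0,1)$, every substitution $x=e_\ell$ or $x=e_\ell k$ turns an $O(x)$ term into one bounded by $M_{f,\calB}^{(m+1)}e_\ell$ uniformly in $k$; after cancelling the common factor $e_\ell^{2m}\big(f^{(m)}(0)/m!\big)^2$ from numerator and denominator one is left with
\[
  d_\ell'(k)=\frac{k^{m-2}\big((1-m)+mk-k^m\big)+O(e_\ell)}{(1-k^m)^2+O(e_\ell)},
\]
both remainders uniform in $k$. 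I would double-check here that the two leading brackets really are $(1-m)(1-k^m)+mk(1-k^{m-1})=(1-m)+mk-k^m$ and $(1-k^m)^2$, since this collapse is the one place a sign slip could creep in.

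Next I would apply Lemma~\ref{lem:epsLimit} with $\mathcal S=\mathcal T=[\tfrac{2m-3}{2m},r]$, $A(k):=k^{m-2}\big((1-m)+mk-k^m\big)$, and $B(k):=(1-k^m)^2$: on this interval $|B(k)|\ge(1-r^m)^2>0$ and $A/B$ is continuous hence bounded, so both hypotheses hold. Consequently, for every $\epsilon>0$ there is $\gamma>0$ such that $e_\ell\in(0,\gamma)$ forces $\big|d_\ell'(k)-\phi_0(k)\big|<\epsilon$ for all $k\in[\tfrac{2m-3}{2m},r]$, where $\phi_0(k):=\tfrac{k^{m-2}((1-m)+mk-k^m)}{(1-k^m)^2}$; one checks that $\phi_0$ is precisely the derivative of the limiting iteration map $k\mapsto\tfrac{1-k^{m-1}}{1-k^m}$ obtained from $d_\ell$ as $e_\ell\to 0$.

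The heart of the argument is then the algebraic claim $\sup_{k\in[\frac{2m-3}{2m},r]}|\phi_0(k)|<\tfrac{m}{m+1}$. Set $h(k):=(1-m)+mk-k^m$; then $h(1)=0$ and $h'(k)=m(1-k^{m-1})>0$ on $(0,1)$, so $h<0$ there and $|\phi_0(k)|=\tfrac{k^{m-2}\,(k^m-mk+(m-1))}{(1-k^m)^2}$. Using the identities $k^m-mk+(m-1)=(1-k)^2\sum_{i=0}^{m-2}(m-1-i)k^i$ and $1-k^m=(1-k)\sum_{i=0}^{m-1}k^i$, the factor $(1-k)^2$ cancels and $|\phi_0(k)|<\tfrac{m}{m+1}$ becomes equivalent to $f_m(k):=m\big(\sum_{i=0}^{m-1}k^i\big)^2-(m+1)k^{m-2}\sum_{i=0}^{m-2}(m-1-i)k^i>0$. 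Expanding $f_m$ as a polynomial, its \emph{only} negative coefficient is $-(m-1)$, occurring at degree $m-2$; for $m\ge 3$ the degree-$(m-3)$ coefficient equals $m(m-2)\ge m-1$, and since $k^{m-3}>k^{m-2}$ on $(0,1)$ this single term already dominates the negative one, so $f_m>0$ on all of $(0,1)$; for $m=2$ one has $\phi_0(k)=-\tfrac{1}{(1+k)^2}$ outright, and $\tfrac{1}{(1+k)^2}<\tfrac23$ exactly when $k>\sqrt{3/2}-1$, which holds on $[\tfrac{1}{4},1)=[\tfrac{2m-3}{2m},1)$. Either way $|\phi_0(k)|<\tfrac{m}{m+1}$ on $[\tfrac{2m-3}{2m},1)$, and compactness upgrades this to $M_0:=\sup_{[\frac{2m-3}{2m},r]}|\phi_0|<\tfrac{m}{m+1}$. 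Finally, taking $\epsilon:=\tfrac12\big(\tfrac{m}{m+1}-M_0\big)$ in the previous paragraph and $\gamma_0:=\min(\delta,\gamma)$ gives, for all $e_\ell\in(0,\gamma_0)$ and $k\in[\tfrac{2m-3}{2m},r]$, $|d_\ell'(k)|\le|\phi_0(k)|+\epsilon\le M_0+\tfrac12\big(\tfrac{m}{m+1}-M_0\big)<\tfrac{m}{m+1}$, as claimed.

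The main obstacle is the bookkeeping in the first and third steps: carrying the uniform $O(e_\ell)$ remainders correctly through the quotient-rule expansion, and---more delicate---verifying that $f_m$ has exactly one negative coefficient, located at degree $m-2$ with value $-(m-1)$ (the coefficients in degrees $m-1$ through $2m-2$ need a short case-by-case check), since the whole estimate hinges on that structural fact together with the elementary $m=2$ computation.
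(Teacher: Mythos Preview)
Your approach is correct and follows the same overall scaffolding as the paper: differentiate $d_\ell$, substitute the Taylor expansions \eqref{eq:ftaylor}--\eqref{eq:fprimetaylor} to obtain $d_\ell'(k)=\phi_0(k)+O(e_\ell)$ uniformly in $k$ via Lemma~\ref{lem:epsLimit}, handle $m=2$ by direct evaluation, and finish with an algebraic bound on $\phi_0$ (which coincides with the paper's $b(k)$ in \eqref{eq:bkForm}).

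The one substantive difference is in the algebraic step. The paper proves the sharper inequality $|b(k)|<\tfrac{m-1}{m}$ by rewriting $b(k)=-\tfrac{k^{m-2}\sum_{i=0}^{m-2}\sum_{j=0}^{i}k^j}{(\sum_{j=0}^{m-1}k^j)^2}$, bounding the double sum above by $(m-1)\sum_{j=0}^{m-1}k^j$, and then using $k^{m-3}+k^{m-1}>2k^{m-2}$ to compare the remaining denominator with $mk^{m-2}$; this leaves an explicit gap $\tfrac{m}{m+1}-\tfrac{m-1}{m}=\tfrac{1}{m(m+1)}$, which becomes the $\epsilon$ fed into Lemma~\ref{lem:epsLimit}. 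You instead prove $|\phi_0(k)|<\tfrac{m}{m+1}$ directly by expanding $f_m(k)=m\big(\sum_{i=0}^{m-1}k^i\big)^2-(m+1)k^{m-2}\sum_{i=0}^{m-2}(m-1-i)k^i$ and tracking its single negative coefficient, then invoke compactness of $[\tfrac{2m-3}{2m},r]$ to extract a strict supremum $M_0<\tfrac{m}{m+1}$ and set $\epsilon=\tfrac12(\tfrac{m}{m+1}-M_0)$. Your coefficient claim is correct (for $m-1\le d\le 2m-4$ the coefficient is $d+3>0$, and the lone negative one is $-(m-1)$ at degree $m-2$), and for $m\ge 3$ your argument actually gives $f_m>0$ on all of $(0,1)$, not merely on $[\tfrac{2m-3}{2m},r]$. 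The paper's route is a bit slicker and yields an explicit $\epsilon$; yours is more computational but equally valid and avoids the need for the sharper intermediate bound.
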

\begin{proof}
  Taking the derivative of $d_\ell (k)$ yields
  \begin{displaymath}
    \renewcommand{\arraystretch}{1.7}
    \begin{array}{ll}
      &d_\ell '(k) 
      = \frac{\frac{1}{k^2}(e_\ell k^2 f(e_\ell)f'(e_\ell k) 
              - e_\ell k f(e_\ell) f'(e_\ell k) + f(e_\ell) f(e_\ell k) 
              - f(e_\ell k) f(e_\ell k))}{(f(e_\ell) - f(e_\ell k))^2} 
      \\ & \hspace{0.9cm}
      = \frac{ (m  k^{m-1} - m k^{m-2} 
        + k^{m-2} - k^{2m-2})e_\ell^{2m} 
        + O(e_\ell^{2m+1})}{(1 - k^{m})^2e_\ell^{2m} 
        + O(e_\ell^{2m+1})} 
      \\ & \hspace{0.9cm}
      = \frac{m k^{m-1} - (m - 1) k^{m-2} - k^{2m -2} + O(e_\ell)}{(1 - k^m)^2 
        + O(e_\ell)} 
    \end{array}
  \end{displaymath}
   where the second step applies Taylor expansions 
   to $f(e_\ell)$, $f(e_\ell k)$, and $f'(e_\ell k)$ 
   at $0$ using equations \eqref{eq:ftaylor} and \eqref{eq:fprimetaylor},
   and then absorbs the $O((e_\ell k)^{j})$ terms into $O(e_\ell^{j})$ 
   since $k < 1$.

  When $m = 2$, we have $k \ge \frac{1}{4}$. By Lemma~\ref{lem:epsLimit} 
   for $\epsilon = \frac{2}{75}$, there exists $\gamma_0 \in (0, \delta)$ 
   such that whenever $e_\ell \in(0, \gamma_0)$, we have
   $|d_\ell '(k)| = \frac{1 + O(e_\ell)}{(1 + k)^2 +O(e_\ell)} 
    < \frac{4^2}{5^2} + \epsilon = \frac{2}{3}=\frac{m}{m+1}$, 
   which directly proves the result. 

  For $m \geq 3$, define
  \begin{equation}
    \label{eq:bkForm}
    \begin{array}{l}
      b(k) := \frac{m k^{m-1} - (m - 1) k^{m-2} - k^{2m -2}}{(1 - k^m)^2},
    \end{array}
  \end{equation}
  We next prove that for $k \in \left[\frac{2m-3}{2m}, r\right]$, 
   $|b(k)| < \frac{m-1}{m}$. 
  Further simplifying $b(k)$ yields 
  \begin{displaymath}
    \renewcommand{\arraystretch}{1.2}
    \begin{array}{ll}
      & b(k) 
      = \frac{m k^{m-1} - (m - 1) k^{m-2} - k^{2m -2}}{(1 - k^m)^2} 
      = -\frac{k^{m-2}(1-k)(m - 1 - k\sum_{j=0}^{m-2} k^j) }{(1-k^m)^2} 
      \nonumber 
      \\ & \hspace{0.73cm}
      = -\frac{k^{m-2}(1-k)\sum_{j=1}^{m-1}(1 - k^j)}{(1-k^m)^2} 
      = -\frac{k^{m-2}(1-k)^2(\sum_{i=0}^{m-2} \sum_{j=0}^{i} k^j)}
              {(1-k)^2(\sum_{j=0}^{m-1} k^j)^2}
      \\ & \hspace{0.73cm}
      = -\frac{k^{m-2}(\sum_{i=0}^{m-2} \sum_{j=0}^{i} k^j)}
              {(\sum_{j=0}^{m-1} k^j)^2},
    \end{array}
  \end{displaymath}
   where the last step follows from the condition $k \leq r < 1$,
   by canceling the common factor $(1 - k)^2$
   in both the numerator and denominator.
  Observe that for $k > 0$, 
   we have $k^{m-2}(\sum_{i=0}^{m-2} \sum_{j=0}^{i} k^j) > 0$ 
   and $(\sum_{j=0}^{m-1} k^j)^2 > 0$, so $b(k) < 0$.
  Thus, it suffices to show that $b(k) > -\frac{m-1}{m}$. 
  By bounding $b(k)$, we obtain
  \begin{align*}
    \renewcommand{\arraystretch}{1.2}
    \begin{array}{ll}
        b(k)
      = -\frac{k^{m-2}\sum_{i=0}^{m-2} (\sum_{j=0}^{i} k^j)}
              {(\sum_{j=0}^{m-1} k^j)(\sum_{j=0}^{m-1} k^j)}
      > -\frac{k^{m-2}\sum_{i=0}^{m-2} (\sum_{j=0}^{m-1} k^j)}
              {(\sum_{j=0}^{m-1} k^j)(\sum_{j=0}^{m-1} k^j)}
      = -\frac{(m-1)k^{m-2}}{\sum_{j=0}^{m-1} k^j}.
    \end{array}
  \end{align*}
  Given that $0 < k < 1$, the inequality $ k^{m-3} + k^{m-1} > 2k^{m-2}$ holds,
   and we derive
  \begin{equation}
    \label{eq:bkRelease}
    \renewcommand{\arraystretch}{1.2}
    \begin{array}{ll}
        b(k)
      > -\frac{(m-1)k^{m-2}}{\sum_{j=0}^{m-1} k^j}
      > -\frac{(m-1)k^{m-2}}{\sum_{j=0}^{m-4} k^j + 3k^{m-2}} 
      > -\frac{(m-1)k^{m-2}}{mk^{m-2}} = -\frac{m-1}{m},
    \end{array}
  \end{equation}
   where the third step follows from that for
   $j = 0, 1, \ldots, m-4$, we have $k^j > k^{m-2}$.
 
  Since for $k \in \left[ \frac{2m-3}{2m}, r \right]$,
   $|b(k)| < \frac{m-1}{m}$ 
   and $\left| (1 - k^m)^2 \right| \ge \left| (1 - r^m)^2 \right| > 0$, 
  Lemma~\ref{lem:epsLimit} shows that for $\epsilon = \frac{1}{m(m+1)} > 0$,
   there exists $\gamma_0 \in (0, \delta)$ such that 
   whenever $e_\ell \in (0, \gamma_0)$, $|d_\ell '(k) - b(k)| < \epsilon$.
  Together with the triangle inequality, we have
  \[ 
    \renewcommand{\arraystretch}{1.2}
    \begin{array}{ll}
      |d_\ell '(k)| \le |b(k)| + |d_\ell '(k) - b(k)|
                    < \frac{m-1}{m} + \epsilon = \frac{m}{m+1}. 
    \end{array}
  \]
\end{proof}

Similarly, the following lemma shows that 
 for $k_n \in \left[ \frac{2m-3}{2m}, r \right]$, 
 and a sufficiently small $e$, the derivative $|g_n'(e)|$ is uniformly bounded.
\begin{lemma}
  \label{lem:gnPrimeBound}
  Consider a function $f(x) \in \calC^{m+1}(\calB)$ with $m\ge 2$,
   and $0$ is an $m$-fold root of $f$.
  For any fixed $r \in \left(\frac{2m-3}{2m}, 1\right)$, 
   define the constant
  \begin{equation}
    \label{eq:GForm}
    \renewcommand{\arraystretch}{1.2}
    \begin{array}{ll}
      G := \frac{4 M_{f, \calB}^{(m+1)}}
                {(1-r^m)\left| f^{(m)}(0)\right|} + 1 
        > 0,
    \end{array}
  \end{equation}
   where $M_{f, \calB}^{(m+1)}$ is as in \eqref{eq:MfmPrimeForm}.
  There exists $\gamma_1 \in (0, \delta)$ such that 
   for all $k_n \in \left[ \frac{2m-3}{2m}, r \right]$, $e \in (0, \gamma_1)$, 
   the derivative $g_n'(e)$ satisfies $|g_n'(e)| < G$,
   where $g_n(e)$ is defined in \eqref{eq:dlgnForm}.
\end{lemma}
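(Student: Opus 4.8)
The plan is to compute $g_n'(e)$ explicitly, substitute the Taylor expansions \eqref{eq:ftaylor} and \eqref{eq:fprimetaylor} into numerator and denominator, identify the leading term, and then bound that leading term uniformly in $k_n \in \left[\frac{2m-3}{2m}, r\right]$ before invoking Lemma~\ref{lem:epsLimit} to absorb the lower-order perturbations. First I would differentiate $g_n(e) = \frac{f(e) - f(ek_n)/k_n}{f(e) - f(ek_n)}$ with respect to $e$, obtaining a quotient whose denominator is $(f(e) - f(ek_n))^2$ and whose numerator is a combination of $f(e)$, $f(ek_n)$, $f'(e)$, $f'(ek_n)$. Plugging in $f(x) = x^m(\tfrac{f^{(m)}(0)}{m!} + O(x))$ and $f'(x) = x^{m-1}(\tfrac{f^{(m)}(0)}{(m-1)!} + O(x))$, and using that $k_n < 1$ to absorb $O((ek_n)^j)$ into $O(e^j)$, the denominator becomes $(1 - k_n^m)^2 e^{2m}(f^{(m)}(0))^2 + O(e^{2m+1})$ and the numerator becomes $e^{2m-1}(f^{(m)}(0))^2 \cdot (\text{rational function of } k_n) + O(e^{2m})$, so that $g_n'(e) = \frac{1}{e}\cdot\frac{h(k_n) + O(e)}{(1-k_n^m)^2 + O(e)}$ for an explicit polynomial $h$. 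This $1/e$ factor is harmless here only if the $\frac1e$ cancels — so the key computational check is that the numerator actually carries an $e^{2m}$ factor (not $e^{2m-1}$), i.e. that the putative leading term $e^{2m-1}(f^{(m)}(0))^2 h(k_n)$ vanishes identically; this cancellation is exactly the analogue of $d_\ell'$ being bounded, and it is the heart of why the secant iteration on $k$ is well-behaved.

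Concretely, I expect the $e^{2m-1}$-coefficient to cancel because the pointwise value $g_n(e)$ stays bounded as $e \to 0$ for each fixed $k_n$ (it tends to a finite limit, being the analogue of $d_\ell(k_n)$), so its derivative cannot blow up like $1/e$; once that cancellation is verified, $g_n'(e) = \frac{\tilde h(k_n) + O(e)}{(1 - k_n^m)^2 + O(e)}$ with $\tilde h$ a fixed rational function, and the numerator's $O(e)$ term is controlled by $M_{f,\calB}^{(m+1)}$ as in \eqref{eq:MfmPrimeForm}. Then I would bound $|\tilde h(k_n)|$ on the compact interval $\left[\frac{2m-3}{2m}, r\right]$ — a continuous function on a compact set, hence bounded — and bound $(1 - k_n^m)^2$ below by $(1 - r^m)^2 > 0$ using $k_n \le r < 1$, which is precisely the denominator hypothesis of Lemma~\ref{lem:epsLimit}. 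The constant $G = \frac{4 M_{f,\calB}^{(m+1)}}{(1 - r^m)|f^{(m)}(0)|} + 1$ in \eqref{eq:GForm} is evidently engineered so that, after applying Lemma~\ref{lem:epsLimit} with a suitable small $\epsilon$, one gets $|g_n'(e) - \tilde h(k_n)| < \epsilon$ and hence $|g_n'(e)| \le \sup_{k}|\tilde h(k)| + \epsilon < G$ for all $e \in (0, \gamma_1)$ with $\gamma_1 \in (0,\delta)$ chosen from that lemma.

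The main obstacle will be the bookkeeping in the two substitutions: keeping track of which power of $e$ each term contributes after differentiation, and verifying the $e^{2m-1}$ cancellation in the numerator of $g_n'(e)$ without error — the algebra is longer than for $d_\ell'$ because $g_n$ is differentiated in the "$e$" slot rather than the "$k$" slot, so both $f(e)$ and $f'(e)$ appear (not just $f(e_\ell k)$, $f'(e_\ell k)$ with $e_\ell$ frozen). A secondary, more conceptual point is to make sure the $O(e)$ error terms are genuinely uniform in $k_n$ over $\left[\frac{2m-3}{2m}, r\right]$; this is fine because the implied constants come from $M_{f,\calB}^{(m+1)}$ and from $k_n \le r$, both independent of the particular $k_n$, so Lemma~\ref{lem:epsLimit} applies with $\mathcal{S} = (0,\delta)$ (the $e$-variable) and $\mathcal{T} = \left[\frac{2m-3}{2m}, r\right]$ (the $k_n$-parameter). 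Once the leading rational function $\tilde h$ is correctly isolated and shown continuous on the compact $k$-interval, the rest is a direct application of Lemma~\ref{lem:epsLimit} exactly as in the proof of Lemma~\ref{lem:dnPrimeBound}.
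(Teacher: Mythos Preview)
Your approach is essentially correct and matches the paper's, with one refinement worth flagging. After the $e^{2m-1}$ cancellation in the numerator of $g_n'(e)$, what survives at order $e^{2m}$ is \emph{not} of the form $\tilde h(k_n)$ for a fixed rational function of $k_n$ alone --- this is where the analogy with $b(k)$ in Lemma~\ref{lem:dnPrimeBound} breaks down. The $(f^{(m)}(0))^2$-level contribution cancels \emph{entirely} at order $e^{2m-1}$, and the $e^{2m}$ coefficient in $f'(e)f(ek_n) - k_n f(e)f'(ek_n)$ comes from the $f^{(m+1)}$ terms in the Lagrange remainders; it therefore depends on intermediate points $\xi_i,\zeta_i \in (0,e)$ as well as on $k_n$, and is not a clean function of $k_n$. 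The paper handles this by writing out explicit Lagrange-form remainders (rather than generic $O(x)$), obtaining a numerator of the form
\[
  \Bigl(f^{(m+1)}(\xi_1) - \tfrac{m}{m+1}f^{(m+1)}(\xi_0) + \tfrac{m}{m+1}f^{(m+1)}(\zeta_0)k_n - f^{(m+1)}(\zeta_1)k_n\Bigr)k_n^{m-1} + O(e)
\]
after canceling a common factor $f^{(m)}(0)(1-k_n)e^{2m}$, and bounds this directly by $4M_{f,\calB}^{(m+1)}$ using $|k_n|\le 1$. This is precisely why $M_{f,\calB}^{(m+1)}/|f^{(m)}(0)|$ appears in $G$ (one power of $f^{(m)}(0)$ survives in the denominator, none in the numerator) rather than a pure constant as a nontrivial $\tilde h$ would give. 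With that adjustment --- bound the numerator directly via $M_{f,\calB}^{(m+1)}$ rather than via compactness of a $\tilde h$ --- your plan goes through: Lemma~\ref{lem:epsLimit} with $\epsilon = 1$ yields $|g_n'(e)| < G$.
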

\begin{proof}
  Taking the derivative of $g_n(e)$ yields
  \begin{align*}
    \renewcommand{\arraystretch}{1.2}
    \begin{array}{ll}
      g_n'(e) &= \frac{\left( \frac{1}{k_n} - 1 \right) 
                       \left( f'(e) f(e k_n) - k_n f(e) f'(e k_n) \right)}
                      {\left( f(e) - f(e k_n) \right)^2}.
    \end{array}
  \end{align*}
  Consider the Taylor expansions of $f(e), f'(e), f(e k_n)$, 
   and $f'(e k_n)$ at $0$. 
  By Theorem \ref{thm:taylor}, there exist 
   $\xi_0, \xi_1 \in (0, e)$ and 
   $\zeta_0, \zeta_1 \in (0, e k_n) \subseteq (0, e)$ 
   such that:
  \begin{displaymath}
    \renewcommand{\arraystretch}{1.2}
    \begin{array}{rl}
      &f(e) = \frac{f^{(m)}(0)}{m!} e^{m} 
            + \frac{f^{(m+1)}(\xi_0)}{(m+1)!}  e^{m+1},
    \\ & \hspace{-0.08cm}
      f'(e) = \frac{f^{(m)}(0)}{(m-1)!} e^{m-1} 
             + \frac{f^{(m+1)}(\xi_1)}{m!} e^{m},
    \\ & \hspace{-0.34cm}
      f(e k_n) = \frac{f^{(m)}(0)}{m!} e^{m} k_n^m 
                + \frac{f^{(m+1)}(\zeta_0)}{(m+1)!} e^{m+1} k_n^{m+1},
    \\ & \hspace{-0.42cm}
      f'(e k_n) = \frac{f^{(m)}(0)}{(m-1)!} e^{m-1} k_n^{m-1} 
                 + \frac{f^{(m+1)}(\zeta_1)}{m!} e^{m} k_n^m.
    \end{array}
  \end{displaymath} 
  From the above four equations, we compute
  \begin{displaymath}
    \renewcommand{\arraystretch}{1.2}
    \begin{array}{ll}
      &(m!)^{2}f'(e)f(e k_n) 
      \\ & \hspace{-0.4cm}
      =
       \left( m f^{(m)}(0) e^{m-1} + f^{(m+1)}(\xi_1) e^{m} \right) 
       \left(f^{(m)}(0) k_n^{m}e^{m} 
             + \frac{1}{m+1} f^{(m+1)}(\zeta_0) k_n^{m+1}e^{m+1} \right) 
      \\ & \hspace{-0.4cm}
      = \left[ m \left(f^{(m)}(0)\right)^2 k_n^m e^{2m-1} \right. 
      \\ & \hspace{.15cm}
      \left.
        + f^{(m)}(0)\left( f^{(m+1)}(\xi_1)k_n^m
        + f^{(m+1)}(\zeta_0)\frac{m}{m+1}k_n^{m+1} \right) e^{2m}
        + O(e^{2m+1}) \right], 
      \\ & 
      (m!)^{2}k_nf(e)f'(e k_n)
      \\ & \hspace{-0.4cm}
      =
       \left( f^{(m)}(0) e^{m} + \frac{1}{m+1} f^{(m+1)}(\xi_0) e^{m+1} \right)
       \left(m f^{(m)}(0) k_n^{m}e^{m-1} 
             + f^{(m+1)}(\zeta_1) k_n^{m+1}e^{m} \right) 
      \\ & \hspace{-0.4cm}
      = \left[m \left(f^{(m)}(0)\right)^2  k_n^{m}e^{2m-1} \right.
      \\ & \hspace{.15cm}
      \left.
      + f^{(m)}(0)\left( f^{(m+1)}(\zeta_1)k_n^{m+1} 
         + f^{(m+1)}(\xi_0)\frac{m}{m+1}k_n^{m} \right) e^{2m} 
      + O(e^{2m+1}) \right],
    \end{array}
  \end{displaymath}
   and
  \begin{displaymath}
    \renewcommand{\arraystretch}{1.2}
    \begin{array}{ll}
      &\left( f(e) - f(e k_n) \right)^2 
      \\ & \hspace{-0.4cm}
      =\frac{1}{(m!)^{2}}
      \left[\left( f^{(m)}(0) e^{m} 
                   + \frac{1}{m+1} f^{(m+1)}(\xi_0) e^{m+1} \right) \right.
      \\ & \hspace{1cm} 
      \left. 
        - \left(f^{(m)}(0) k_n^m e^{m} 
                + \frac{1}{m+1} f^{(m+1)}(\zeta_0) k_n^{m+1} e^{m+1}\right)
      \right]^2  
      \\ & \hspace{-0.4cm}
      = \frac{1}{(m!)^{2}}
        \left[ f^{(m)}(0)(1 - k_n^m)e^{m} 
              + \frac{1}{m+1}\left(f^{(m+1)}(\xi_0) 
                                - f^{(m+1)}(\zeta_0)k_n^{m+1} \right)e^{m+1}
        \right]^2 
      \\ & \hspace{-0.4cm}
      = \frac{1}{(m!)^{2}}
        \left[ \left(f^{(m)}(0)\right)^2(1 - k_n^m)^2e^{2m} 
              + O(e^{2m+1})\right],
    \end{array}
  \end{displaymath}
   where the $O(e^{2m+1})$ in the last step of the above three equations is 
   due to the fact that the absolute values of the coefficients 
   of higher-order terms all have constant upper bounds:
  \begin{displaymath}
    \renewcommand{\arraystretch}{1.2}
    \begin{array}{ll}
      &\left|f^{(m+1)}(\xi_1) \cdot 
       \frac{1}{m+1}f^{(m+1)}(\zeta_0)k_n^{m+1}\right|
       \le \frac{1}{m+1}\left(M_{f, \calB}^{(m+1)}\right)^2|k_n|^{m+1}
      \\ &\hspace{5.36cm}
       \le \frac{1}{m+1}\left(M_{f, \calB}^{(m+1)}\right)^2,\\
      &\left|\frac{1}{m+1}f^{(m+1)}(\xi_0) \cdot 
       f^{(m+1)}(\zeta_1)k_n^{m+1}\right|
       \le \frac{1}{m+1}\left(M_{f, \calB}^{(m+1)}\right)^2|k_n|^{m+1}
      \\ &\hspace{5.36cm}
       \le \frac{1}{m+1}\left(M_{f, \calB}^{(m+1)}\right)^2,\\
      &\left|2 \cdot f^{(m)}(0)(1 - k_n^m) \cdot \frac{1}{m+1} 
             \left(f^{(m+1)}(\xi_0) - f^{(m+1)}(\zeta_0)k_n^{m+1} \right)
       \right| 
      \\ &\hspace{5.36cm}
       \le 2 \cdot |f^{(m)}(0)|(1+|k_n|^m) \cdot 
            \frac{M_{f, \calB}^{(m+1)}}{m+1}
      \\ &\hspace{5.36cm}
       \le \frac{8 M_{f, \calB}^{(m+1)}|f^{(m)}(0)|}{m+1},\\
      &\left|\frac{1}{m+1}\left(f^{(m+1)}(\xi_0) - 
             f^{(m+1)}(\zeta_0)k_n^{m+1} \right)\right|^2
       \le \left|\frac{1}{m+1}M_{f, \calB}^{(m+1)}(1+|k_n|^{m+1})\right|^2\\
       &\hspace{6.1cm}\le \left(\frac{2M_{f, \calB}^{(m+1)}}{m+1}\right)^2.
    \end{array}
  \end{displaymath}
  Therefore,
  \begin{displaymath}
    \renewcommand{\arraystretch}{1.2}
    \begin{array}{ll}
      &f'(e)f(e k_{n})-k_{n}f(e)f'(e k_{n})
      \\ & \hspace{-0.43cm}
      = \frac{1}{(m!)^{2}}f^{(m)}(0)
      \left(f^{(m+1)}(\xi_1) - \frac{m}{m+1} f^{(m+1)}(\xi_0) 
      + \frac{m}{m+1}f^{(m+1)}(\zeta_0)k_n - f^{(m+1)}(\zeta_1)k_n \right)
      \\ &
      \ \cdot \ (1 - k_n)k_n^{m-1} e^{2m} + O(e^{2m+1}).
    \end{array}
  \end{displaymath}
  Substituting the expressions for
   $f'(e) f(e k_{n})-k_{n} f(e) f'(e k_{n})$
   and $\big( f(e) - f(e k_n) \big)^2$ into $g_n'(e)$,
   and using the conditions $f^{(m)}(0) \neq 0$,
   $e \neq 0$, and $k_n \leq r < 1$,
   we cancel the common factor $\frac{1}{(m!)^{2}}f^{(m)}(0)(1 - k_n)e^{2m}$
   from the numerator and denominator of $g_n'(e)$ to obtain
  \begin{displaymath}
    \renewcommand{\arraystretch}{1.2}
    \begin{array}{ll}
      g_n'(e)
      = \frac{\left(f^{(m+1)}(\xi_1) - \frac{m}{m+1} f^{(m+1)}(\xi_0) +
                     \frac{m}{m+1}f^{(m+1)}(\zeta_0)k_n - f^{(m+1)}(\zeta_1)k_n
              \right)
        k_n^{m-1} + O(e)}
        {f^{(m)}(0)(1-k_n^m)\left(\sum_{j=0}^{m-1} k_n^j\right)  + O(e)}.
    \end{array}
  \end{displaymath}
  Since 
   $ \left|f^{(m)}(0)(1-k_n^m)\left(\sum_{j=0}^{m-1} k_n^j\right)\right|
     > \left| f^{(m)}(0)\right|\cdot |1 - r^m| \cdot 1>0 $
   and
  \begin{displaymath}
    \renewcommand{\arraystretch}{1.2}
    \begin{array}{ll}
      \left| 
      \frac{ \left(f^{(m+1)}(\xi_1) - \frac{m}{m+1} f^{(m+1)}(\xi_0) 
                  + \frac{m}{m+1}f^{(m+1)}(\zeta_0)k_n 
                  - f^{(m+1)}(\zeta_1)k_n \right)k_n^{m-1}}
            {f^{(m)}(0)(1-k_n^m)\left(\sum_{j=0}^{m-1} k_n^j\right)} 
      \right| 
      < \frac{4 M_{f, \calB}^{(m+1)}}
            { \left| f^{(m)}(0) \right| \cdot (1 - r^m) },
    \end{array}
  \end{displaymath}
  Lemma~\ref{lem:epsLimit} shows that for $\epsilon = 1 > 0$,
  there exists $\gamma_1 \in(0,\delta)$ such that
  for all $e\in (0, \gamma_1)$, 
  \begin{displaymath}
    \renewcommand{\arraystretch}{1.2}
    \begin{array}{ll}
      &\left| g_n'(e) \right|
      < \left| 
        \frac{ \left(f^{(m+1)}(\xi_1) - \frac{m}{m+1} f^{(m+1)}(\xi_0) 
                  + \frac{m}{m+1}f^{(m+1)}(\zeta_0)k_n 
                  - f^{(m+1)}(\zeta_1)k_n \right)k_n^{m-1}}
            {f^{(m)}(0)(1-k_n^m)\left(\sum_{j=0}^{m-1} k_n^j\right)} 
        \right|  
      + \epsilon 
      \\ & \hspace{1.08cm}  
      < \frac{4 M_{f, \calB}^{(m+1)}}{(1-r^m)\left| f^{(m)}(0)\right|} 
      + \epsilon 
      = G, 
    \end{array}
  \end{displaymath}
  where the constant $G$ is as shown in \eqref{eq:GForm}.
\end{proof}

Finally, we combine the above conclusions to prove that 
 the secant method converges linearly near multiple roots of the function $f$.
\begin{theorem}
  \label{thm:EnLinearConvergence}
  Consider a function $f(x) \in \calC^{m+1}(\calB)$ with $m\ge 2$, 
   and $0$ is an $m$-fold root of $f$. 
  If the initial values $k_0$ and $e_0$ are chosen such that 
   $k_0 \in (0, 1)\cup (1, +\infty)$ and
   $e_0 > 0$ is sufficiently small,
   then the secant method error sequence $\{e_n\}$ converges linearly, 
   and its AEC $c = c_{m, 0}$,
   i.e.,
  \begin{equation*}
    \renewcommand{\arraystretch}{1.2}
    \begin{array}{ll} 
        \lim\limits_{n \to \infty} \frac{E_{n+1}}{E_{n}} 
      = \lim\limits_{n \to \infty} |k_n|   
      = c_{m, 0},
    \end{array}
  \end{equation*}
   where $c_{m, 0}$ is as defined in Lemma~\ref{lem:characterRoots0}.
\end{theorem}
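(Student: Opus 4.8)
The plan is to prove the theorem in two stages: first show that the auxiliary sequence $\{k_n\}$ of error ratios defined in \eqref{eq:knForm} is Cauchy, hence convergent to some limit $k^\ast$; then identify $k^\ast$ as $c_{m,0}$ by passing to the limit in the recurrence \eqref{eq:knRecm}, using the Taylor expansion \eqref{eq:ftaylor}. Since $E_{n+1}/E_n = |k_n|$, this immediately yields Q-linear convergence with the claimed AEC.

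For the setup, Theorem~\ref{thm:mEnConvergence} gives $e_n>0$ and $e_n\to 0$, and Lemma~\ref{lem:knRange} provides $N_0\in\bbN$ and $r\in\left(\frac{2m-3}{2m},1\right)$ with $k_\ell\in\left[\frac{2m-3}{2m},r\right]$ for all $\ell>N_0$. I would then apply Lemmas~\ref{lem:dnPrimeBound} and~\ref{lem:gnPrimeBound} with \emph{this} $r$, obtaining $\gamma_0,\gamma_1\in(0,\delta)$ and a constant $G>0$, and enlarge $N_0$ to some $N$ so that additionally $e_\ell<\min(\gamma_0,\gamma_1)$ for all $\ell>N$ (possible since $e_\ell\to 0$). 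For the Cauchy argument, fix $\ell\ge n>N$ and apply \eqref{eq:fixKn}: there $\hat k_{\ell,n}\in\range(k_\ell,k_n)\subseteq\left[\frac{2m-3}{2m},r\right]$ and $\hat e_{\ell,n}\in\range(e_\ell,e_n)\subseteq(0,\min(\gamma_0,\gamma_1))$, so Lemmas~\ref{lem:dnPrimeBound} and~\ref{lem:gnPrimeBound} give
\[
  |k_{\ell+1}-k_{n+1}|\le \tfrac{m}{m+1}\,|k_\ell-k_n|+G\,|e_\ell-e_n|.
\]
Set $\tau_n:=\sup_{\ell\ge n}|k_\ell-k_n|$, which is finite because all the $k_\ell$ lie in a bounded interval, and $\beta_n:=\sup_{\ell\ge n}|e_\ell-e_n|$. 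Taking the supremum over $\ell\ge n$ in the displayed inequality gives $\tau_{n+1}\le\frac{m}{m+1}\tau_n+G\beta_n$ for $n>N$. Since $e_n\to 0$ we have $\beta_n\to 0$, and a routine estimate on this perturbed recursion — with contraction factor $\frac{m}{m+1}<1$ and the $\tau_n$ uniformly bounded — forces $\tau_n\to 0$; no decay rate on $e_n$ is needed. Then $\sup_{\ell,\ell'\ge n}|k_\ell-k_{\ell'}|\le 2\tau_n\to 0$, so $\{k_n\}$ is Cauchy and converges to some $k^\ast$, with $k^\ast\in\left[\frac{2m-3}{2m},r\right]\subset(0,1)$.

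To identify $k^\ast$, I would substitute \eqref{eq:ftaylor} into \eqref{eq:knRecm}, writing $f(e_{n-1})=e_{n-1}^m\bigl(\tfrac{f^{(m)}(0)}{m!}+O(e_{n-1})\bigr)$ and $f(e_{n-1}k_{n-1})=e_{n-1}^m k_{n-1}^m\bigl(\tfrac{f^{(m)}(0)}{m!}+O(e_{n-1})\bigr)$, absorbing the factor $k_{n-1}\in(0,1)$ into the error terms; cancelling $e_{n-1}^m$ then yields
\[
  k_n=\frac{1-k_{n-1}^{m-1}+O(e_{n-1})}{1-k_{n-1}^{m}+O(e_{n-1})}.
\]
Letting $n\to\infty$, with $e_{n-1}\to 0$, $k_{n-1},k_n\to k^\ast$, and the denominator limit $1-(k^\ast)^m\ge 1-r^m>0$, gives $k^\ast\bigl(1-(k^\ast)^m\bigr)=1-(k^\ast)^{m-1}$; rearranging and cancelling the nonzero factor $1-k^\ast$ reduces this to $(k^\ast)^m+(k^\ast)^{m-1}-1=0$, i.e.\ $p_m(k^\ast)=0$. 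Since $k^\ast\in(0,1)$, Lemma~\ref{lem:characterRoots0} forces $k^\ast=c_{m,0}$. Hence $\lim_{n\to\infty}E_{n+1}/E_n=\lim_{n\to\infty}|k_n|=c_{m,0}\in(0,1)$, which is Q-linear convergence with AEC $c_{m,0}$ by Definition~\ref{def:p-orderConvergence}.

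The main obstacle is the convergence of $\{k_n\}$: a one-step estimate only says each ratio is a near-contraction of the previous plus a vanishing perturbation, which alone does not give convergence. The device that resolves this is to monitor the tail diameter $\tau_n=\sup_{\ell\ge n}|k_\ell-k_n|$ rather than consecutive differences, since $\tau_n\to 0$ is exactly the Cauchy property and follows from $e_n\to 0$ with no rate. A secondary point requiring care is that the $O(e_{n-1})$ terms in the limit computation of Step three must be controlled uniformly; this is guaranteed by the uniform bound $M_{f,\calB}^{(m+1)}$ of \eqref{eq:MfmPrimeForm} together with Lemma~\ref{lem:epsLimit}, exactly as in the preceding lemmas.
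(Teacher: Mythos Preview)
Your proposal is correct and follows essentially the same route as the paper: the same setup via Theorem~\ref{thm:mEnConvergence} and Lemma~\ref{lem:knRange}, the same decomposition \eqref{eq:fixKn} combined with the bounds of Lemmas~\ref{lem:dnPrimeBound} and~\ref{lem:gnPrimeBound} to obtain the perturbed contraction $|k_{\ell+1}-k_{n+1}|\le\frac{m}{m+1}|k_\ell-k_n|+G|e_\ell-e_n|$, a Cauchy argument from this, and identification of the limit via $p_m(k^\ast)=0$. The only cosmetic difference is that the paper unrolls the contraction inequality directly by induction down to a fixed base index, whereas you package the same estimate through the tail diameter $\tau_n=\sup_{\ell\ge n}|k_\ell-k_n|$; both arrive at the Cauchy property from $e_n\to 0$ alone.
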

\begin{proof}
  By Theorem~\ref{thm:mEnConvergence} and Lemma~\ref{lem:knRange},
   it follows that 
  \begin{align*}      
    \renewcommand{\arraystretch}{1.2}
    \begin{array}{ll}  
      \forall \epsilon > 0,\ 
      &\exists N_0 \st \forall \ell , n \geq N_0,\ 
       |e_\ell  - e_n| < \frac{\epsilon}{2G(m+1)};  \\
      &\exists N_1 \st \forall n \geq N_1,\ 
       0 < e_n < \min(\gamma_0, \gamma_1); \\
      &\exists N_2 \st \forall n \geq N_2,\ 
       |k_n| = k_n \in \left[\frac{2m-3}{2m}, r\right],
    \end{array}
  \end{align*}
   where $G$ is as described in Lemma~\ref{lem:gnPrimeBound}; 
   $\gamma_0, \gamma_1$ are as described in Lemma~\ref{lem:dnPrimeBound} 
   and Lemma~\ref{lem:gnPrimeBound}, respectively;
   $r \in \left(\frac{2m-3}{2m}, 1\right)$ is 
   as described in Lemma~\ref{lem:knRange}.
  We define $N_3 := \max(N_0, N_1, N_2)$,
   then for all $ n \geq N_3 $ and $ \ell > n$,
   $\range(k_\ell, k_n) \subseteq \left[\tfrac{2m-3}{2m}, r \right]$
   and $\range(e_{\ell}, e_{n}) \subseteq (0, \min(\gamma_0, \gamma_1))$.
  Applying Lemmas~\ref{lem:dnPrimeBound}~and~\ref{lem:gnPrimeBound}, 
   respectively, we get
  \begin{align}
     \forall \ell \geq N_3,\ 
    &\forall k \in \left[\tfrac{2m-3}{2m}, r\right],\ 
     |d_\ell '(k)| < \tfrac{m}{m+1}, \label{eq:dlPrimeBound}  \\
     \text{and} \quad
     \forall n \geq N_3,\ 
    &\forall e \in(0, \gamma_1),\ 
     |g_n'(e)| < G. \label{eq:gnPrimeBound}
  \end{align}
  Since $\frac{m}{m+1} < 1$, it holds that 
  \[
    \renewcommand{\arraystretch}{1.2}
    \begin{array}{ll}
      \exists N > N_3 \st 
      \forall n \geq N,\ 
      2 \left(\frac{m}{m+1}\right)^{n+1 - N_3} < \frac{\epsilon}{2}.
    \end{array}
  \]
  For any $ n \geq N > N_3 $ and $ \ell > n$,
   from \eqref{eq:fixKn}, we know
   $\hat{k}_{\ell, n} \in \range(k_\ell, k_n) \subseteq 
     \left[\frac{2m-3}{2m},r\right]$ 
   and $\hat{e}_{\ell, n} \in \range(e_{\ell}, e_{n}) \subseteq 
    (0, \gamma_1)$.
  Given the above inclusions, we derive
  \begin{align*}
    \renewcommand{\arraystretch}{1.2}
    \begin{array}{ll}
      & |k_{\ell +1} - k_{n+1}| 
      = |d_\ell '(\hat{k}_{\ell, n}) (k_{\ell} - k_{n}) +
        g_{n}'(\hat{e}_{\ell, n}) (e_{\ell} - e_{n}) |
      \\ & \hspace{2.13cm}
      \leq |d_\ell '(\hat{k}_{\ell, n}) (k_{\ell} - k_{n}) |
       + | g_{n}'(\hat{e}_{\ell, n}) (e_{\ell} - e_{n}) | 
      \\ & \hspace{2.13cm}
      < \frac{m}{m+1} |k_{\ell} - k_{n}| + G|e_{\ell} - e_{n}|,
    \end{array}
  \end{align*}
   where the second step follows from the triangle inequality,
   and the last from the fact that 
   $\hat{k}_{\ell, n} \in \left[ \frac{2m-3}{2m}, r \right]$ 
   and \eqref{eq:dlPrimeBound} imply 
   $|d_\ell '(\hat{k}_{\ell, n})| < \frac{m}{m+1}$, while
   $\hat{e}_{\ell, n} \in (0, \gamma_1)$ and  
   \eqref{eq:gnPrimeBound} yield $|g_n'(\hat{e}_{\ell, n})| < G$. 
  Applying induction to $|k_{\ell +1} - k_{n+1}| < 
   \frac{m}{m+1} |k_{\ell} - k_{n}| + G|e_{\ell} - e_{n}|$, we obtain
  \begin{displaymath}
    \renewcommand{\arraystretch}{1.2}
    \begin{array}{rl}
      &|k_{\ell +1} - k_{n+1}|
      \\ & \hspace{-0.43cm}
      < \left(\frac{m}{m+1} \right)^{n + 1 - N_3} |k_{\ell -n+N_3} - k_{N_3}| 
        + G\sum_{i=0}^{n-N_3} \left( \frac{m}{m+1} \right)^{i} 
        |e_{\ell -i} - e_{n-i}| 
      \\ & \hspace{-0.43cm}
      < \left(\frac{m}{m+1} \right)^{n + 1 - N_3} \cdot 2  
        + G \cdot \frac{1}{1 - \frac{m}{m+1}} \cdot \frac{\epsilon}{2G(m+1)} 
      < \frac{\epsilon}{2} +  \frac{\epsilon}{2} 
      = \epsilon.
    \end{array}
  \end{displaymath}
  Therefore, $\{k_n\}$ is a Cauchy sequence, 
   and it converges to some $c \in \left[ \frac{2m-3}{2m}, r \right]$.

  Next, we compute $d_{\ell}(k)$ on the domain
   $k \in \left[ \frac{2m-3}{2m}, r \right]$:
  \begin{displaymath}
    \renewcommand{\arraystretch}{1.7}
    \begin{array}{ll}
      &d_\ell (k)
      = \frac{f(e_{\ell }) - \frac{f(e_{\ell }k)}{k}}
             {f(e_{\ell }) - f(e_{\ell }k)} 
      = \frac{\left(\frac{f^{(m)}(0)}{m!}e_\ell ^m + O(e_\ell ^{m+1})\right) 
              - \left(\frac{f^{(m)}(0)}{m!}e_\ell ^mk^{m-1} 
                      + O((e_\ell  k)^{m+1})\right)}
             {\left(\frac{f^{(m)}(0)}{m!}e_\ell ^m + O(e_\ell ^{m+1})\right) 
              -\left(\frac{f^{(m)}(0)}{m!} e_\ell ^mk^{m} 
                      + O((e_\ell  k)^{m+1})\right)}
      \\ & \hspace{3.1cm}
      = \frac{(1 - k^{m-1})e_{\ell}^{m} + O(e_\ell^{m+1}) 
              + O((e_\ell k)^{m+1})}
             {(1 - k^m)e_{\ell}^{m} + O(e_\ell^{m+1} )+ O((e_\ell k)^{m+1})},
    \end{array}
  \end{displaymath}
   where the second step applies Taylor expansion of $f(e_\ell )$ 
   and $f(e_\ell  k)$ at $0$ using \eqref{eq:ftaylor},
   and the last follows from $ \frac{f^{(m)}(0)}{m!}\neq 0 $.
  Since $k \leq r < 1$ has a constant upper bound, the term
   $O((e_\ell k)^{m+1})$ can be absorbed into $O(e_\ell ^{m+1})$.
  Then, dividing both the numerator and denominator by $e_\ell^{m}$ yields
  \begin{equation}
    \label{eq:dlForm}
    \renewcommand{\arraystretch}{1.2}
    \begin{array}{ll}
      d_\ell (k) = \frac{(1 - k^{m-1}) + O(e_\ell )}{(1 - k^m) + O(e_\ell )}.
    \end{array}
  \end{equation}
  Define 
  \begin{equation}
    \label{eq:hform}
    \renewcommand{\arraystretch}{1.2}
    \begin{array}{ll}
      h_m(k) := \frac{1- k^{m - 1}}{1 - k^{m}} 
    \end{array}
  \end{equation}
   so that for $0 < k \leq r < 1$,
   $\left| 1 - k^m \right| \geq 1 - r^m > 0$ 
   and $|h_m(k)| = \frac{1 - k^{m-1}}{1 - k^m} < 1$. 
  By Lemma~\ref{lem:epsLimit}, for any $ \epsilon_d > 0$,
   there exists $\delta_d > 0$ such that 
   for all $k \in \left[\frac{2m-3}{2m}, r\right]$,
   provided that $|e_\ell| < \delta_d$, 
   we have $|d_\ell (k) - h_m(k)|  < \epsilon_d$. 
  Since the sequence $\{e_\ell \}$ converges to $0$, 
   there exists $N_d>N$ such that for all $\ell > N_d$, $|e_\ell| < \delta_d$. 
  In summary,
  \[
    \renewcommand{\arraystretch}{1.2}
    \begin{array}{ll}
      \forall \epsilon_d>0, \exists N_d>0 \st 
      \forall \ell > N_d, \forall k \in \left[\frac{2m-3}{2m}, r\right],\ 
      |d_\ell (k) - h_m(k)| < \epsilon_d,
    \end{array}
  \]
   which means that $d_\ell (k)$ converges uniformly to $h_m(k)$ on 
   $\left[ \frac{2m-3}{2m}, r \right]$.
  It holds that
  \[
      h_m(c) = \lim\limits_{k \to c} h_m(k) 
             = \lim\limits_{\ell \to \infty} h_m(k_\ell ) 
             = \lim\limits_{\ell \to \infty} d_\ell (k_\ell )  
             = \lim\limits_{\ell \to \infty} k_{\ell +1} = c,
  \] 
   where the first step follows from the continuity of $h_m(k)$,
   the second step from the assumption 
   $\lim\limits_{\ell \to\infty} k_\ell  = c \in \left[\frac{2m-3}{2m}, r\right]$,
   the third step from the uniform convergence of $d_\ell (k)$, 
   and the fourth step from \eqref{eq:dlklrelation}.
  Therefore, $c \in  \left[\frac{2m-3}{2m}, r \right] \subseteq (0, 1)$ 
   satisfies 
  \begin{equation}
    \label{eq:hFixPoint}
    \renewcommand{\arraystretch}{1.2}
    \begin{array}{ll}
        h_m(c) - c 
      = \frac{1 - c^{m-1} - c^{m}}
           {\sum_{j=0}^{m-1} c^{j}} 
      = -\frac{p_m(c)}{\sum_{j=0}^{m-1} c^{j}} 
      = 0
    \Longrightarrow p_m(c) = 0,
    \end{array}
  \end{equation}
   which, together with Lemma~\ref{lem:characterRoots0}, shows that 
   $c$ is the unique solution $c_{m, 0}$ of the equation
   $p_m(k) = 0$ in $(0, 1)$. 
  Thus, $\lim\limits_{n \to \infty} k_n = c_{m, 0}$, completing the proof.
\end{proof}


\subsection{Sufficient conditions on initial values
  for Q-linear convergence}
\label{sec:sufficientConditionsMR}  

Next, we consider the case where $e_0 = x_0$ and $e_1 = x_1$ 
 have opposite signs, i.e., $k_0 = \frac{e_1}{e_0} < 0$. 
By Theorem~\ref{thm:EnLinearConvergence}, we know 
 that for initial values $k_0 \in (0, 1)\cup (1, +\infty)$ and
 $e_0 > 0$ sufficiently small,
 the secant method error sequence $\{e_n\}$ converges linearly.
The following lemma shows 
 that $k_0 \in (-1, 0)$ together with sufficiently small $|e_0|$ 
 is a sufficient condition for Q-linear convergence of 
 the secant method error sequence $\{e_n\}$. 
We do not consider the case $k_0 = 0$, which corresponds 
 to premature termination of the secant method iteration
 as described in Hypothesis~\ref{hyp:nonTerminate}.
\begin{lemma}
  \label{lem:convergenceSufficientCondition}
  Consider a function $f(x) \in \calC^{m+1}(\calB)$ with $m \ge 2$, 
   and $0$ is an $m$-fold root of $f$.
  If the initial values $k_0$ and $e_0$ are chosen such that 
  $ k_0 \in (-1, 0) \cup (0, 1) \cup (1, +\infty) $
   and $|e_0|$ is sufficiently small, 
   then the error sequence $\{e_n\}$ converges linearly.
\end{lemma}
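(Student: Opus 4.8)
The plan is to reduce the lemma to Theorem~\ref{thm:EnLinearConvergence}. The cases $k_0\in(0,1)\cup(1,+\infty)$ are already covered by that theorem: although it is stated under the extra hypothesis $e_0>0$, the substitution $\tilde f(x):=f(-x)$ turns the iterates $\{e_n\}$ into $\{-e_n\}$, leaving both the ratio $k_0=e_1/e_0$ and every magnitude $E_n=|e_n|$ unchanged, so the conclusion — linear convergence of $\{E_n\}$ with AEC $c_{m,0}$ — holds whenever $|e_0|$ is small and $k_0\in(0,1)\cup(1,+\infty)$, regardless of the sign of $e_0$. It therefore remains to handle $k_0\in(-1,0)$, and by the same sign flip we may assume $e_0>0$, so that $e_1=k_0e_0<0$, $0<|e_1|=|k_0|e_0<e_0$ is also small, and $e_0\neq e_1$.

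The heart of the plan is to show that a \emph{single} secant step carries this opposite-sign configuration into the regime just settled: namely, $k_1\in(0,1)\cup(1,+\infty)$ once $e_0$ is small enough. Setting $n=1$ in \eqref{eq:knRecm} shows that $k_1$ equals the quantity $d_0(k_0)$ of \eqref{eq:dlgnForm}, and expanding $f$ at $0$ via \eqref{eq:ftaylor} — the derivation of \eqref{eq:dlForm} goes through verbatim for $k_0\in(-1,0)$, since only $|k_0|<1$ is used there to absorb the cross terms $O((e_0k_0)^{m+1})$ into $O(e_0^{m+1})$ — I would obtain
\[
  k_1=\frac{(1-k_0^{m-1})+O(e_0)}{(1-k_0^m)+O(e_0)}.
\]
Because $k_0$ is fixed with $|k_0|<1$, the denominator $1-k_0^m$ is a nonzero constant and $h_m(k_0)=(1-k_0^{m-1})/(1-k_0^m)$ from \eqref{eq:hform} is a finite constant, so Lemma~\ref{lem:epsLimit} gives $k_1\to h_m(k_0)$ as $e_0\to 0$.

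Next I would verify that $h_m(k_0)\in(0,1)\cup(1,+\infty)$ for every $k_0\in(-1,0)$ and every $m\geq2$; granting this, one shrinks $e_0$ until $|k_1-h_m(k_0)|<\tfrac12\min\bigl(h_m(k_0),\,|h_m(k_0)-1|\bigr)$, which forces $k_1\in(0,1)\cup(1,+\infty)$. The value analysis of $h_m$ is elementary: $|k_0|<1$ gives $1-k_0^m>0$ (equal to $1+|k_0|^m$ for odd $m$ and to $1-|k_0|^m$ for even $m$) and likewise $1-k_0^{m-1}>0$, whence $h_m(k_0)>0$; and $h_m(k_0)=1$ would force $k_0^{m-1}(1-k_0)=0$, impossible for $k_0\in(-1,0)$. (In fact $h_m(k_0)<1$ exactly when $k_0^{m-1}(1-k_0)>0$, i.e.\ when $m$ is odd, so $h_m$ sends $(-1,0)$ into $(0,1)$ for odd $m$ and into $(1,+\infty)$ for even $m$.)

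Finally I would apply Theorem~\ref{thm:EnLinearConvergence} to the shifted sequence $\{e_{n+1}\}_{n\geq0}$, which is itself a secant iteration whose two initial values $e_1,e_2$ have ratio $k_1\in(0,1)\cup(1,+\infty)$ and whose first entry $e_1$ has small nonzero magnitude (after flipping the sign once more, if needed, so that the first entry is positive — which again preserves $k_1$ and all magnitudes). That theorem then yields that $\{|e_{n+1}|\}$ converges linearly to $0$ with AEC $c_{m,0}$, which is the same assertion as $\{E_n\}$ converging linearly with AEC $c_{m,0}$. The only genuine obstacle here is the observation that one secant step repairs the sign mismatch, together with the routine check that $h_m\bigl((-1,0)\bigr)\subseteq(0,1)\cup(1,+\infty)$; everything else is bookkeeping with Taylor expansions and Lemma~\ref{lem:epsLimit}.
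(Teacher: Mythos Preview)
Your proposal is correct and follows essentially the same approach as the paper: reduce the case $k_0\in(0,1)\cup(1,+\infty)$ to Theorem~\ref{thm:EnLinearConvergence} via the sign flip $\tilde f(x)=f(-x)$, and for $k_0\in(-1,0)$ show that a single secant step gives $k_1\in(0,1)\cup(1,+\infty)$ by expanding $k_1=d_0(k_0)$ through \eqref{eq:dlForm} and applying Lemma~\ref{lem:epsLimit}. The paper carries out the parity split on $m$ with explicit $\epsilon$--$\delta$ bounds (showing $k_1>1$ for even $m$ and $0<k_1<1$ for odd $m$), which is precisely your observation that $h_m\bigl((-1,0)\bigr)\subseteq(1,+\infty)$ or $(0,1)$ according to parity.
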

\begin{proof}
  When $k_0 \in (0, 1) \cup (1, +\infty)$, without loss of generality, 
   assume $e_0 > 0$; otherwise, consider the function $\tilde{f}(x) = f(-x)$. 
  By Theorem~\ref{thm:EnLinearConvergence}, if $e_0$ is sufficiently small,
   then the error sequence $\{e_n\}$ converges linearly.

  When $k_0 \in (-1, 0)$, we first show that $k_1 > 0$.
  Applying \eqref{eq:dlForm} and \eqref{eq:dlklrelation} 
   with $\ell = 0$, $k = k_0$, we obtain 
  \begin{equation}
    \label{eq:k1RecursiveRelation}
    \renewcommand{\arraystretch}{1.2}
    \begin{array}{ll}
    k_1 = d_0(k_0) 
        = \frac{1 - k_0^{m-1} + O(e_0)} 
               {1 - k_0^m + O(e_0)}.
    \end{array}
  \end{equation}
  Lemma~\ref{lem:epsLimit} shows that for
  \begin{displaymath}
    \renewcommand{\arraystretch}{1.2}
    \begin{array}{ll}
        \epsilon_0 = \frac{1 - k_0^{m-1}}{1 - k_0^m} > 0, \exists \delta_0 > 0
    \st \forall |x| < \delta_0, \ 
      \frac{1-k_0^{m-1} + O(x)}{1- k_0^m + O(x)} 
    > \frac{1 - k_0^{m-1}}{1 - k_0^m} - \epsilon_0 =  0,
    \end{array}
  \end{displaymath}
   which, together with sufficiently small $|e_0|$ 
   such that $|e_0| < \delta_0$, implies $k_{1} > 0$. 
    
  Next, we show that $k_1 \neq 1$.  
    
  When $m$ is even, we have $-1 < k_0^{m-1} < 0$ and $0 < k_0^m < 1$, which yields
  \begin{displaymath}
    \renewcommand{\arraystretch}{1.2}
    \begin{array}{ll}  
      1 < \frac{1}{1 - k_0^m} <  \frac{1 - k_0^{m-1}}{1 - k_0^m}.
    \end{array}
  \end{displaymath}
  By Lemma~\ref{lem:epsLimit}, for
  \begin{displaymath}
    \renewcommand{\arraystretch}{1.2}
    \begin{array}{ll}
      \epsilon_1 = \frac{-k_0^{m-1}}{1 - k_0^m} > 0, \exists \delta_1 > 0 
    \st  
      \forall |x| < \delta_1, \ 
      1 < \frac{1 - k_0^{m-1}}{1 - k_0^m} - \epsilon_1 
        < \frac{1-k_0^{m-1} + O(x)}{1- k_0^m + O(x)}.
    \end{array}
  \end{displaymath}
  Provided that $|e_0|$ is sufficiently small such that $|e_0| < \delta_1$,
   we conclude $1 < k_1$.

  When $m$ is odd, we have $-1 < k_0^{m} < 0$ and $0 < k_0^{m-1} < 1$, 
   which gives
  \begin{displaymath}
    \renewcommand{\arraystretch}{1.2}
    \begin{array}{ll} 
      \frac{1 - k_0^{m-1}}{1 - k_0^m} < \frac{1}{1 - k_0^m} < 1.
    \end{array}
  \end{displaymath}
  From Lemma~\ref{lem:epsLimit}, for
  \begin{displaymath}
    \renewcommand{\arraystretch}{1.2}
    \begin{array}{ll}
        \epsilon_2 = 1 - \frac{1 - k_0^{m-1}}{1 - k_0^m} > 0,
        \exists \delta_2 > 0
    \st \forall |x| < \delta_2, \
        \frac{1-k_0^{m-1} + O(x)}{1- k_0^m + O(x)}
      < \frac{1 - k_0^{m-1}}{1 - k_0^m} + \epsilon_2 = 1.
    \end{array}
  \end{displaymath}
    Combined with sufficiently small $|e_0|$ such that $|e_0| < \delta_2$, 
    we obtain $k_1 < 1$.
    
  In summary, when $k_0 \in (-1, 0)$ 
   and $|e_{0}|$ is chosen sufficiently small such that
   $|e_0| < \min(\delta_0, \delta_1, \delta_2)$, 
   we obtain $k_1 \in (0, 1) \cup (1, +\infty)$, 
   and $|e_1| = |k_0||e_0| = O(e_0)$ is also sufficiently small. 
  Considering $k_0 \leftarrow k_1, e_0 \leftarrow e_1$ as new initial values 
   for iteration of the secant method, we now have 
   $k_0 \in (0, 1) \cup (1, +\infty)$ and $|e_0|$ is sufficiently small. 
  Under these conditions, it was previously proven that
   $\{e_n\}$ converges linearly.
\end{proof}

In fact, the condition that $k_0 \in (-1, 0) \cup (0, 1) \cup (1, +\infty)$ 
 and $e_0$ satisfying $|e_0|$ is sufficiently small
 is only a sufficient condition for the Q-linear convergence of $\{e_n\}$.
Furthermore, depending on the parity of $m$, the admissible range of $k_0$ 
 can be extended: for sufficiently small $|e_0|$, 
 the sequence $\{e_n\}$ still converges linearly.

We first consider the case where $m$ is odd. 
For notational convenience, when discussing odd multiple roots, 
 we replace $m$ with $2m+1$, where $m\in\bbN^+$,
 that is, $0$ is a $(2m+1)$-fold root of $f \in \calC^{2m+2}(\calB)$.
The following theorem provides sufficient conditions 
 for the Q-linear convergence of $\{e_n\}$ with respect 
 to the values of $k_0$ and $e_0$ in the neighborhood of odd multiple roots.
\begin{theorem}
  \label{thm:knMoreRangeOdd}
  Consider a function $f(x) \in \calC^{2m+2}(\calB)$ with 
   $m\in\bbN^+$, and $0$ is a $(2m+1)$-fold root of $f$. 
  If the initial values $k_0$ and $e_0$ are chosen such that 
   $k_0 \notin \{-1, 0, 1\}$
   and $|e_0|$ is sufficiently small,
   then the error sequence $\{e_n\}$ converges linearly.
\end{theorem}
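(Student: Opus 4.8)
The plan is to reduce the statement to Lemma~\ref{lem:convergenceSufficientCondition}, which already yields Q-linear convergence of $\{e_n\}$ whenever $k_0 \in (-1,0)\cup(0,1)\cup(1,+\infty)$. Since $k_0\notin\{-1,0,1\}$, the only case left open is $k_0<-1$, and the whole proof will consist in showing that one secant step carries such a $k_0$ into the interval $(-1,0)$ while keeping the error small, after which the lemma applies.

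The only genuinely new computation is a one-step estimate. For multiplicity $2m+1$, the first-step identity \eqref{eq:k1RecursiveRelation} (the case $\ell=0$ of \eqref{eq:dlForm} and \eqref{eq:dlklrelation}) reads
\[
  k_1 = d_0(k_0) = \frac{1-k_0^{2m}+O(e_0)}{1-k_0^{2m+1}+O(e_0)},
\]
with leading term $h_{2m+1}(k_0)=\frac{1-k_0^{2m}}{1-k_0^{2m+1}}$. I would then show that $h_{2m+1}(k_0)\in(-1,0)$ for every $k_0<-1$: writing $k_0=-t$ with $t>1$ gives $h_{2m+1}(-t)=\frac{1-t^{2m}}{1+t^{2m+1}}$, which is negative (negative numerator, positive denominator) and satisfies $|h_{2m+1}(-t)|=\frac{t^{2m}-1}{t^{2m+1}+1}<1$ since $t^{2m}-1<t^{2m}<t^{2m+1}<t^{2m+1}+1$. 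This is exactly where the oddness of the multiplicity enters: for an even multiplicity the analogous ratio can exceed $1$ when $|k_0|$ is only slightly above $1$, so the one-step reduction fails and the even case must be handled separately (Theorem~\ref{thm:fx2mCommConvergence}).

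With $k_0<-1$ fixed we have $1-k_0^{2m+1}\neq0$, so Lemma~\ref{lem:epsLimit} applied to the displayed formula for $k_1$ gives, for each $\epsilon>0$, a bound on $|e_0|$ below which $|k_1-h_{2m+1}(k_0)|<\epsilon$; choosing $\epsilon$ smaller than the distance from $h_{2m+1}(k_0)$ to $\{-1,0\}$ forces $k_1\in(-1,0)$, hence $k_1\notin\{-1,0,1\}$. Since also $|e_1|=|k_0|\,|e_0|=O(e_0)$ can be made arbitrarily small by further shrinking $|e_0|$, restarting the iteration from $(k_0,e_0)\leftarrow(k_1,e_1)$ puts us in the case $k_0\in(-1,0)$ of Lemma~\ref{lem:convergenceSufficientCondition}, which gives linear convergence (with AEC $c_{2m+1,0}$ as in Theorem~\ref{thm:EnLinearConvergence}). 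The point that needs care is the interplay of the ``sufficiently small'' thresholds: the admissible size of $|e_1|$ needed to invoke Lemma~\ref{lem:convergenceSufficientCondition} depends on $k_1$, which itself depends on $(k_0,e_0)$. I would settle this by first fixing a compact subinterval $[a,b]\subset(-1,0)$ around $h_{2m+1}(k_0)$; for all small enough $|e_0|$ one has $k_1\in[a,b]$, and the thresholds appearing in Lemmas~\ref{lem:epsLimit} and~\ref{lem:convergenceSufficientCondition} are given by explicit expressions that are continuous and strictly positive in the ratio parameter on $[a,b]$, hence bounded below there by a single positive constant. Everything else is a short concatenation of the lemmas already proved.
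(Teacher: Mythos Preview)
Your proposal is correct and follows essentially the same approach as the paper: reduce to $k_0<-1$ via Lemma~\ref{lem:convergenceSufficientCondition}, show $h_{2m+1}(k_0)\in(-1,0)$, invoke Lemma~\ref{lem:epsLimit} to place $k_1\in(-1,0)$, and restart. Your treatment of the threshold dependency (fixing a compact subinterval of $(-1,0)$ on which the required smallness bounds are uniform) is in fact more careful than the paper, which simply asserts that $|e_1|=|k_0||e_0|=O(e_0)$ is ``also sufficiently small'' without addressing that the admissible size for Lemma~\ref{lem:convergenceSufficientCondition} depends on $k_1$.
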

\begin{proof}
  By Lemma~\ref{lem:convergenceSufficientCondition},
   it is sufficient to consider the case $k_0 < -1$. 
  Applying \eqref{eq:k1RecursiveRelation} 
   with $m$ replaced by $2m+1$ yields
  \begin{displaymath}
    \renewcommand{\arraystretch}{1.2}
    \begin{array}{ll}
      k_1  = d_0(k_0) = \frac{1-k_0^{2m} + O(e_0)}{1 - k_0^{2m+1} + O(e_0)}.
    \end{array}
  \end{displaymath}
  When $k_0 < -1$, we have $1 - k_0^{2m} < 0 < 1 - k_0^{2m+1}$, which implies 
   $\frac{1-k_0^{2m}}{1 - k_0^{2m+1}} < 0$ and 
  \begin{displaymath}
    \renewcommand{\arraystretch}{1.2}
    \begin{array}{ll}
        \frac{1-k_0^{2m}}{1 - k_0^{2m+1}} 
      = \frac{1-|k_0|^{2m}}{1 + |k_0|^{2m+1}} 
      > \frac{1-|k_0|^{2m+1}}{1 + |k_0|^{2m+1}} 
      > -1.
    \end{array}
  \end{displaymath}
  This shows that $\frac{1 - k_0^{2m}}{1 - k_0^{2m+1}} \in (-1, 0)$.
  Then Lemma~\ref{lem:epsLimit} yields that for
  \begin{displaymath}
    \renewcommand{\arraystretch}{1.2}
    \begin{array}{ll}
      &\epsilon = \min\left(- \frac{1 - k_0^{2m}}{1 - k_0^{2m+1}}, 
                            1 + \frac{1-k_0^{2m}}{1 - k_0^{2m+1}}\right) > 0,
        \ \exists \delta_0 > 0  \\
  \st &\forall |x| < \delta_0, \ 
        -1 \le \frac{1-k_0^{2m}}{1 - k_0^{2m+1}} - \epsilon 
      < \frac{1-k_0^{2m} + O(x)}{1- k_0^{2m+1} + O(x)} 
      < \frac{1-k_0^{2m}}{1 - k_0^{2m+1}} + \epsilon 
      \le 0. 
    \end{array}
  \end{displaymath}
  Combined with the condition that $|e_0|$ is sufficiently small
   such that $|e_0| < \delta_0$, it follows that $-1 < k_1 < 0$. 
  Since $|e_1| = |k_0||e_0| = O(e_0)$ is also sufficiently small,
   by taking $k_0 \leftarrow k_1$ and  $e_0 \leftarrow e_1$
   as new initial values for the iteration of the secant method,
   the initial values $k_0, e_0$ satisfy the conditions of
   Lemma~\ref{lem:convergenceSufficientCondition}. 
  Consequently, the error sequence $\{e_n\}$ converges linearly.
\end{proof}

Next, we consider the case where $m$ is even. 
For notational convenience, when discussing even multiple roots, 
 we replace $m$ with $2m$, where $m\in\bbN^+$,
 i.e., $0$ is a $2m$-fold root of $f \in \calC^{2m+1}(\calB)$.
Note that, in this setting,
 it is not true that for arbitrary $k_0 \notin \{-1, 0, 1\}$, 
 the error sequence $\{e_n\}$ converges linearly
 whenever $|e_0|$ is sufficiently small.
To establish more general sufficient conditions for the convergence 
 of the secant method in this case, we first introduce the following 
 lemma and constants. 
\begin{lemma}
  \label{lem:characterRoots}
  For $m \geq 1$, replacing $m$ by $2m$ 
   in $p_m(k)$ as defined in \eqref{eq:characterEqm0}  
   yields the characteristic equation
  \begin{equation}
    \label{eq:characterEq0}
    p_{2m}(k) = k^{2m} + k^{2m-1} - 1 = 0.
  \end{equation}
  This equation has exactly two real solutions, 
   and one of them is $c_{2m, 0} \in (0, 1)$. 
  If we denote the other solution as $c_{2m, 1}$, 
   then $c_{2m, 1} \in (-2, -1)$.
  Furthermore, the characteristic equation 
  \begin{equation}
    \label{eq:characterEq1}
    {q}_{2m}(k) := p_{2m}(k) - 1 = k^{2m} + k^{2m-1} - 2 = 0
  \end{equation}
   has a unique solution $c_{2m, 2}$ on $[-2, c_{2m, 1})$. 
  In this case, it holds that 
  \begin{equation}
    \label{eq:tgValue}
    \forall x \in (-\infty, c_{2m, 2}), \ 
    \forall y \in (c_{2m, 2}, -1), \quad 
    {q}_{2m}(x) > {q}_{2m}(c_{2m, 2}) = 0 > {q}_{2m}(y).
  \end{equation} 
\end{lemma}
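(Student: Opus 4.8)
The plan is to analyze the two degree-$2m$ polynomials $p_{2m}$ and $q_{2m}$ simultaneously, exploiting that $q_{2m} = p_{2m} - 1$ and hence $q_{2m}' = p_{2m}'$. First I would compute
\[
  p_{2m}'(k) = 2mk^{2m-1} + (2m-1)k^{2m-2} = k^{2m-2}\bigl(2mk + 2m - 1\bigr),
\]
so that, since $k^{2m-2} \ge 0$, the polynomial $p_{2m}$ is strictly decreasing on $\bigl(-\infty, -\tfrac{2m-1}{2m}\bigr]$ and strictly increasing on $\bigl[-\tfrac{2m-1}{2m}, +\infty\bigr)$, with global minimum at $-\tfrac{2m-1}{2m} \in (-1, 0)$. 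Because this minimum is at most $p_{2m}(0) = -1 < 0$ while $p_{2m}(k) \to +\infty$ as $k \to \pm\infty$, the equation $p_{2m}(k) = 0$ has exactly two real roots, one in each monotone branch. The root in the increasing branch lies in $(0,1)$ since $p_{2m}(0) = -1 < 0 < 1 = p_{2m}(1)$, and by Lemma~\ref{lem:characterRoots0} (applied with degree $2m \ge 2$) it is precisely $c_{2m,0}$; for the root $c_{2m,1}$ in the decreasing branch, I would evaluate $p_{2m}(-1) = -1 < 0$ and $p_{2m}(-2) = 2^{2m-1} - 1 > 0$, and since the interval $(-\infty, -1)$ is contained in the decreasing branch, this pins $c_{2m,1} \in (-2, -1)$.

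For $q_{2m}$, the identity $q_{2m}' = p_{2m}'$ shows that $q_{2m}$ is strictly decreasing on $(-\infty, -1)$; together with $q_{2m}(k) \to +\infty$ as $k \to -\infty$ and $q_{2m}(-1) = p_{2m}(-1) - 1 = -2 < 0$, this gives a unique root $c_{2m,2}$ of $q_{2m}$ in $(-\infty, -1)$, hence also in any subinterval. To place $c_{2m,2}$ in $[-2, c_{2m,1})$ I would again combine monotonicity with sign checks: $q_{2m}(c_{2m,1}) = p_{2m}(c_{2m,1}) - 1 = -1 < 0$ forces $c_{2m,2} < c_{2m,1}$, while $q_{2m}(-2) = 2^{2m-1} - 2 \ge 0$ (equality exactly when $m = 1$) forces $c_{2m,2} \ge -2$.

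Finally, \eqref{eq:tgValue} is an immediate consequence of the strict monotonicity of $q_{2m}$ on $(-\infty, -1)$: since $c_{2m,2} < c_{2m,1} < -1$, both $(-\infty, c_{2m,2})$ and $(c_{2m,2}, -1)$ lie within the decreasing branch, so $x < c_{2m,2}$ gives $q_{2m}(x) > q_{2m}(c_{2m,2}) = 0$ and $c_{2m,2} < y < -1$ gives $q_{2m}(y) < q_{2m}(c_{2m,2}) = 0$. I do not anticipate a real obstacle; the only points requiring care are the boundary case $m = 1$, where $q_2(-2) = 0$ so $c_{2,2} = -2$ sits at the closed endpoint, and the bookkeeping of which monotone branch each test point ($-1$, $-2$, $c_{2m,1}$, and the arbitrary $x$, $y$) belongs to, so that the monotonicity arguments apply verbatim.
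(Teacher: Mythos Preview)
Your proposal is correct and follows essentially the same approach as the paper: both compute $p_{2m}'(k)=k^{2m-2}(2mk+2m-1)$, use the resulting two monotone branches together with the sign checks $p_{2m}(0)=-1$, $p_{2m}(1)=1$, $p_{2m}(-1)=-1$, $p_{2m}(-2)=2^{2m-1}-1$ to locate $c_{2m,0}$ and $c_{2m,1}$, then transfer the monotonicity to $q_{2m}=p_{2m}-1$ and use $q_{2m}(c_{2m,1})=-1$, $q_{2m}(-2)=2^{2m-1}-2\ge 0$ to place $c_{2m,2}\in[-2,c_{2m,1})$ and deduce \eqref{eq:tgValue}. The only cosmetic difference is that the paper splits the increasing region into $(-\tfrac{2m-1}{2m},0)$ and $(0,+\infty)$ and treats the case $m=1$ (where $c_{2,2}=-2$) separately, whereas you handle both points uniformly; your bookkeeping remark about $(-\infty,-1)\subseteq(-\infty,-\tfrac{2m-1}{2m}]$ is exactly what is needed to justify applying the decreasing-branch argument at $-1$, $-2$, $c_{2m,1}$, and the arbitrary $x,y$.
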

\begin{proof}
  Since $p_{2m}'(k)  = k^{2m-2}(2mk + 2m - 1)$ is negative 
   on $\left( -\infty, -\frac{2m-1}{2m} \right)$ 
   and positive on 
   $\left( -\frac{2m-1}{2m}, 0 \right) \cup \left( 0, +\infty \right)$, 
   $p_{2m}(k)$ is strictly monotonically decreasing 
   on $\left(-\infty, -\frac{2m-1}{2m}\right)$
   and strictly monotonically increasing on
   $\left(-\frac{2m-1}{2m}, 0 \right)$ 
   and $\left(0, +\infty\right)$.
  Note that $p_{2m}(0) = -1 < 0 < 1 = p_{2m}(1)$. 
  The intermediate value theorem 
   and Lemma~\ref{lem:characterRoots0}
   yield that \eqref{eq:characterEq0} has 
   a unique solution $c_{2m, 0}$ on $(0, 1)$. 
  Moreover, for 
   $k \in \left[ -\frac{2m-1}{2m}, 0 \right)$, $p_{2m}(k) < p_{2m}(0) < 0$, 
   i.e., \eqref{eq:characterEq0} has no real solution 
   on $\left[ -\frac{2m-1}{2m}, 0 \right)$;
   for $k \in \left[1, +\infty \right)$, $p_{2m}(k) \ge p_{2m}(1) > 0$, 
   i.e., \eqref{eq:characterEq0} has no real solution on $\left[1, +\infty\right)$.
  Similarly, from
  \begin{equation}
    \label{eq:gValue2}
         p_{2m}(-2) = 2^{2m} - 2^{2m-1} - 1 = 2^{2m-1} - 1 
    \geq 1 > 0 > -1 = p_{2m}(-1),
  \end{equation}
   we know that \eqref{eq:characterEq0} has a unique solution
   $c_{2m, 1} \in (-2, -1)$ on $\left( -\infty, -\frac{2m-1}{2m} \right)$. 

  For the characteristic equation \eqref{eq:characterEq1}, 
   since ${q}_{2m}'(k) = p_{2m}'(k) < 0$ on 
   $[-2, c_{2m, 1}) \subseteq \left( -\infty, -\frac{2m-1}{2m} \right)$,
   ${q}_{2m}(k)$ is strictly monotonically decreasing on $[-2, c_{2m, 1})$.
  When $m = 1$, \eqref{eq:characterEq1} becomes 
   $q_2(k) = k^2 + k - 2 = 0$, 
   which has a unique solution $c_{2, 2}=-2$ on $[-2, c_{2, 1})$.
  When $m > 1$, from \eqref{eq:gValue2}, we have 
  \[ 
      {q}_{2m}(-2) = p_{2m}(-2) - 1 \ge 2^{2m-1} - 2 
    > 0 > -1 = p_{2m}(c_{2m, 1})-1 = {q}_{2m}(c_{2m, 1}).
  \]
  Then, by the intermediate value theorem, equation \eqref{eq:characterEq1}
   has a solution $c_{2m, 2}$ on $(-2, c_{2m, 1})$.
  Combined with the fact that $q_{2m}(k)$ is
   strictly monotonically decreasing on $(-\infty, -1)$,
   this solution is unique, and \eqref{eq:tgValue} holds.
\end{proof}

Using the constants introduced in Lemma~\ref{lem:characterRoots}, 
 the following lemma extends the sufficient conditions 
 for the Q-linear convergence of $\{e_n\}$ with respect 
 to the values of $k_0$ and $e_0$
 in the neighborhood of even multiple roots.
\begin{lemma}
  \label{lem:knMoreRangeEven}
  Consider a function $f(x) \in \calC^{2m+1}(\calB)$ with
   $m\in\bbN^+$, and $0$ is a $2m$-fold root of $f$. 
  If the initial values $k_0$ and $e_0$ are chosen such that 
   $k_0 \in (-\infty, c_{2m, 2}) \cup (-1, 0) \cup (0, 1) \cup (1, +\infty)$
   and $|e_0|$ is sufficiently small,
   then the error sequence $\{e_n\}$ converges linearly.
\end{lemma}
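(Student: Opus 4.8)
The plan is to reduce to the single new case $k_0 \in (-\infty, c_{2m,2})$ and then show that one secant step carries the ratio $k_1$ into $(-1,0)$, after which Lemma~\ref{lem:convergenceSufficientCondition} finishes the job. Indeed, when $k_0 \in (-1,0)\cup(0,1)\cup(1,+\infty)$ the claim is already Lemma~\ref{lem:convergenceSufficientCondition}, so assume $k_0 < c_{2m,2}$. By Lemma~\ref{lem:characterRoots} we have $c_{2m,2}\in[-2,c_{2m,1})$ and $c_{2m,1}\in(-2,-1)$, hence $k_0 < c_{2m,2} < -1$. Mirroring the structure of the proof of Theorem~\ref{thm:knMoreRangeOdd}, I would apply \eqref{eq:k1RecursiveRelation} with $m$ replaced by $2m$,
\[
  k_1 = d_0(k_0) = \frac{1 - k_0^{2m-1} + O(e_0)}{1 - k_0^{2m} + O(e_0)},
\]
where the $O((e_0 k_0)^{2m+1})$ contributions are absorbed into $O(e_0)$ since $k_0$ is a fixed constant.

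The key computation is to locate the limiting value $h_{2m}(k_0) := \frac{1-k_0^{2m-1}}{1-k_0^{2m}}$. From $k_0 < -1$ and the even exponent we get $k_0^{2m} = |k_0|^{2m} > 1$, so $1 - k_0^{2m} < 0$; and $k_0^{2m-1} = -|k_0|^{2m-1} < -1$, so $1 - k_0^{2m-1} > 2 > 0$. Hence $h_{2m}(k_0) < 0$. For the lower bound, since $1 - k_0^{2m} < 0$,
\[
  h_{2m}(k_0) > -1
  \iff 1 - k_0^{2m-1} < k_0^{2m} - 1
  \iff k_0^{2m} + k_0^{2m-1} - 2 > 0
  \iff q_{2m}(k_0) > 0,
\]
and $q_{2m}(k_0) > 0$ holds by \eqref{eq:tgValue} because $k_0 \in (-\infty, c_{2m,2})$. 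Therefore $h_{2m}(k_0) \in (-1,0)$.

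To conclude, I would invoke Lemma~\ref{lem:epsLimit} with $\epsilon = \min\!\bigl(-h_{2m}(k_0),\, 1 + h_{2m}(k_0)\bigr) > 0$: there is $\delta_0 > 0$ such that $|e_0| < \delta_0$ forces $-1 \le h_{2m}(k_0) - \epsilon < k_1 < h_{2m}(k_0) + \epsilon \le 0$, i.e.\ $k_1 \in (-1,0)$. Since $|e_1| = |k_0|\,|e_0|$ with $k_0$ fixed, $|e_1|$ is sufficiently small whenever $|e_0|$ is. Taking $k_0 \leftarrow k_1 \in (-1,0)$ and $e_0 \leftarrow e_1$ as the new pair of initial data, Lemma~\ref{lem:convergenceSufficientCondition} applies and gives linear convergence of $\{e_n\}$.

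I expect the only real subtlety to be conceptual rather than computational: one must recognize that the left endpoint $c_{2m,2}$ of the admissible set is precisely the threshold at which the one-step map $h_{2m}$ ceases to satisfy $h_{2m}(k_0) > -1$, equivalently at which $q_{2m}$ changes sign, which is exactly how $c_{2m,2}$ was defined in Lemma~\ref{lem:characterRoots}. The $\epsilon$--$\delta$ bookkeeping is routine here because $k_0$ is held fixed, so Lemma~\ref{lem:epsLimit} is used only at a single point, and the remaining work reduces to a sign analysis of the polynomial $q_{2m}$ already carried out in Lemma~\ref{lem:characterRoots}.
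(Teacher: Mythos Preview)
Your proposal is correct and follows essentially the same approach as the paper: reduce to $k_0<c_{2m,2}$ via Lemma~\ref{lem:convergenceSufficientCondition}, use the sign analysis of $q_{2m}$ from \eqref{eq:tgValue} to place $h_{2m}(k_0)$ in $(-1,0)$, apply Lemma~\ref{lem:epsLimit} with $\epsilon=\min(-h_{2m}(k_0),1+h_{2m}(k_0))$ to get $k_1\in(-1,0)$, and then re-invoke Lemma~\ref{lem:convergenceSufficientCondition} on the shifted data. The paper's computation and yours differ only cosmetically (the paper writes $h_{2m}(k_0)+1=\tfrac{q_{2m}(k_0)}{k_0^{2m}-1}$ directly rather than chaining equivalences).
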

\begin{proof}
  By Lemma~\ref{lem:convergenceSufficientCondition}, 
   it suffices to consider the case $k_0 \in (-\infty, c_{2m, 2})$. 
  Applying \eqref{eq:k1RecursiveRelation} 
   with $m$ replaced by $2m$ yields
  \begin{displaymath}
    \renewcommand{\arraystretch}{1.2}
    \begin{array}{ll}
      k_1  = d_0(k_0) = \frac{1-k_0^{2m-1} + O(e_0)}{1 - k_0^{2m} + O(e_0)}.
    \end{array}
  \end{displaymath}
  From $k_0 < c_{2m, 2} < -1$, it follows that
   $1 - k_0^{2m-1} > 0 > 1 - k_0^{2m}$, 
   which implies $\frac{1-k_0^{2m-1}}{1 - k_0^{2m}} < 0$.
  Moreover, \eqref{eq:tgValue} gives
  \begin{displaymath}
    \renewcommand{\arraystretch}{1.2}
    \begin{array}{ll}
        \frac{1 - k_0^{2m-1}}{1 - k_0^{2m}} + 1 
      = \frac{2 - k_0^{2m-1} -  k_0^{2m}}{1 - k_0^{2m}} 
      = \frac{{q}_{2m}(k_0)}{k_0^{2m} - 1}
      > \frac{{q}_{2m}(c_{2m, 2})}{k_0^{2m} - 1} = 0. 
    \end{array}
  \end{displaymath}
  Hence $\frac{1 - k_0^{2m-1}}{1 - k_0^{2m}} \in (-1, 0)$.
  Then Lemma~\ref{lem:epsLimit} shows that for 
  \begin{displaymath}
    \renewcommand{\arraystretch}{1.2}
    \begin{array}{ll}
     &\epsilon = \min\left(-\frac{1-k_0^{2m-1}}{1 - k_0^{2m}}, 
                           1 +\frac{1-k_0^{2m-1}}{1 - k_0^{2m}}\right) > 0,\ 
      \exists \delta_0 > 0  \\
 \st &\forall |x| < \delta_0, \ 
      -1 \le \frac{1 - k_0^{2m-1}}{1 - k_0^{2m}} - \epsilon 
          < \frac{1-k_0^{2m-1} + O(x)}{1- k_0^{2m} + O(x)} 
          < \frac{1 - k_0^{2m-1}}{1 - k_0^{2m}} + \epsilon
          \le 0. 
    \end{array}
  \end{displaymath}
  Combined with the fact that $|e_0|$ is sufficiently small 
   such that $|e_0| < \delta_0$,
   we have $-1 < k_1 < 0$. 
  Since $|e_1| = |k_0||e_0| = O(e_0)$ is also sufficiently small, 
   we can consider taking $k_0 \leftarrow k_1$ and $e_0 \leftarrow e_1$ 
   as new initial values for the iteration of the secant method. 
  Then $k_0, e_0$ satisfy the conditions 
   of Lemma~\ref{lem:convergenceSufficientCondition}, 
   and thus the error sequence $\{e_n\}$ converges linearly.
\end{proof}

However, we can appropriately construct
 functions and initial values such that
 the error sequence $\{e_n\}$ of the secant method does not converge.

If, before the first element $k_\ell$ in the sequence $\{k_n\}$ 
 satisfies the convergence condition 
 $k_\ell \in (-\infty, c_{2m, 2}) \cup (-1, 0) \cup (0, 1) \cup (1, +\infty)$ 
 of Lemma~\ref{lem:knMoreRangeEven},
 there exists $i < \ell$ such that $k_i \in \{-1, c_{2m, 1}, c_{2m, 2}\}$. 
Then, for the function $f(x)=x^{2m}$ with any $e_0 \neq 0$, 
 the error sequence $\{e_n\}$ does not converge, 
 as demonstrated in the following examples.
\begin{example}
  \label{exam:k0eqc1}
  Consider the function $f(x) = x^{2m}$ with $m \in \bbN^+$, 
   and assume $k_{n-1} \neq \pm 1$. 
  From the recurrence relation \eqref{eq:knRecm}, we obtain 
  \begin{equation}
    \label{eq:knRec2m}
    \renewcommand{\arraystretch}{1.2}
    \begin{array}{l}
      k_{n} = \frac{f(e_{n-1}) - \frac{f(e_{n-1}k_{n-1})}{k_{n-1}}}
                   {f(e_{n-1}) - f(e_{n-1}k_{n-1})} 
            = \frac{1 - k_{n-1}^{2m-1}}{1 - k_{n-1}^{2m}} 
            = h_{2m}(k_{n-1}),
    \end{array}
  \end{equation}
   where $h_{2m}(k)$ is defined as \eqref{eq:hform}.
  Note that from \eqref{eq:hFixPoint}, the fixed points 
   of $h_{2m}(k)$ satisfy the characteristic equation $p_{2m}(k) = 0$. 
  Then by Lemma \ref{lem:characterRoots}, the two fixed points 
   of $h_{2m}(k)$ are $c_{2m, 0}$ and $c_{2m, 1}$.
   
  Taking $k_0 = c_{2m, 1} \in (-2, -1)$ and any $e_0 \neq 0$,
   which, together with iteration \eqref{eq:knRec2m},
   yields that for all
   $ n\in \bbN$, $k_{n} = k_0 = c_{2m, 1}$. 
  Because $|c_{2m, 1}| > 1$, 
   it follows that $|e_n| = |c_{2m, 1}|^{n} |e_0| \to +\infty$ 
   as $n \to \infty$. 
  Hence, the error sequence $\{e_n\}$ diverges. 
\end{example}

\begin{example}
  \label{exam:k0eqminus2}
  Consider the function $f(x) = x^{2m}$ with $m \in \bbN^+$. 
  If we take $k_0 = -1$ and $e_0 \neq 0$, 
   then $e_1 = -e_0$, and by formula \eqref{eq:secantMethod}, 
   we have $e_2 = \infty$.
  
  If we take $k_0 = c_{2m, 2}$ and $e_0 \neq 0$, 
   then by substituting 
   ${q}_{2m}(c_{2m, 2}) = c_{2m, 2}^{2m} + c_{2m, 2}^{2m-1} - 2 = 0$ 
   into formula \eqref{eq:knRec2m}, we can compute 
  \begin{displaymath}
    \renewcommand{\arraystretch}{1.2}
    \begin{array}{l}
      k_1 = h_{2m}(k_0) = \frac{1 - c_{2m, 2}^{2m-1}}{1 - c_{2m, 2}^{2m}} 
          = \frac{c_{2m, 2}^{2m} - 1}{1 - c_{2m, 2}^{2m}}
          = -1,
    \end{array}
  \end{displaymath}
   so $e_2 = k_1 e_1 = -e_1$, and by formula \eqref{eq:secantMethod}, 
   we have $e_3 = \infty$.
    
  In these cases, for any $e_0 \neq 0$, the error sequence $\{e_n\}$ diverges. 
\end{example}

To pave the way for establishing more general sufficient conditions 
 for the convergence of the secant method for general functions 
 $f(x) \in \calC^{2m+1}(\calB)$, we first analyze the prototypical case 
 $f(x) = x^{2m}$ with $m \in \bbN^+$.
\begin{lemma}
  \label{lem:fx2mConvergence}
  Consider the function $f(x) = x^{2m}$ with $m \in \bbN^+$.
  For given initial values $k_0 \neq 1$ and $e_0 \neq 0$,
   let $\{k_n\}$ be the sequence obtained by applying the secant method. 
  Let $\calB_{k, 2m} := (c_{2m, 2}, c_{2m, 1}) \cup (c_{2m, 1}, -1)$,
  where the constants $c_{2m, 0}, c_{2m, 1}$, and $c_{2m, 2}$ 
   are defined as in Lemma~\ref{lem:characterRoots}.
  Then 
  \begin{enumerate}[(i)]
    \item there exists an index $\ell \geq 0$ such that 
      $k_\ell \notin \calB_{k, 2m}$;
    \item Let $\ell$ be the smallest such index. Then the error sequence
      $\{e_n\}$ converges linearly if and only if 
      $k_\ell \notin \{-1, 0, 1, c_{2m, 1}, c_{2m, 2}\}$.
  \end{enumerate} 
\end{lemma}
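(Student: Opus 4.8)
The plan is to exploit that, for $f(x)=x^{2m}$, the ratio sequence is \emph{autonomous}: by \eqref{eq:knRec2m}, $k_n=h_{2m}(k_{n-1})$ whenever $k_{n-1}\neq\pm1$, with $h_{2m}$ as in \eqref{eq:hform}; since $|e_{n+1}|/|e_n|=|k_n|$, the error sequence $\{e_n\}$ converges linearly precisely when $\{k_n\}$ converges to a limit in $(0,1)$, and $\{k_n\}$ depends only on $k_0$. First I would record the dynamics of $h_{2m}$ on $\bbR\setminus\{-1,1\}$. From
\[
  h_{2m}'(k)=\frac{-k^{2m-2}\bigl(k^{2m}-2mk+2m-1\bigr)}{(1-k^{2m})^{2}}
\]
(which is $b(k)$ of Lemma~\ref{lem:dnPrimeBound} with $m$ replaced by $2m$) and the observation that $k^{2m}-2mk+2m-1$ vanishes at $k=1$ and is strictly decreasing on $(-\infty,1)$, hence positive there, one sees that $h_{2m}$ is strictly decreasing on each of $(-\infty,-1)$, $(-1,1)$, $(1,+\infty)$. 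Combining this with $h_{2m}(-\infty)=0^{-}$, $h_{2m}(k)\to\pm\infty$ as $k\to-1^{\pm}$, $h_{2m}(0)=1$, $h_{2m}(1^{\pm})=\tfrac{2m-1}{2m}$, $h_{2m}(+\infty)=0^{+}$, with $h_{2m}(c_{2m,2})=-1$ (Example~\ref{exam:k0eqminus2}), and with the fact that the only fixed points of $h_{2m}$ are $c_{2m,0}\in(0,1)$ and $c_{2m,1}\in(-2,-1)$ (by \eqref{eq:hFixPoint} and Lemma~\ref{lem:characterRoots}), one reads off the itinerary
\[
  \begin{array}{ll}
    h_{2m}\bigl((c_{2m,2},c_{2m,1})\bigr)=(c_{2m,1},-1), &
    h_{2m}\bigl((c_{2m,1},-1)\bigr)=(-\infty,c_{2m,1}),\\
    h_{2m}\bigl((-\infty,c_{2m,2})\bigr)=(-1,0), &
    h_{2m}\bigl((-1,0)\bigr)=(1,+\infty),\\
    h_{2m}\bigl((1,+\infty)\bigr)=\bigl(0,\tfrac{2m-1}{2m}\bigr), &
    h_{2m}\bigl((\tfrac{2m-1}{2m},1)\bigr)\subseteq\bigl(\tfrac{2m-1}{2m},1\bigr).
  \end{array}
\]
The one non-elementary ingredient is that $h_{2m}$ has no $2$-cycle in $(-\infty,0)$: if $\{a,b\}$, $a\neq b$, were one, then $a,b\neq1$, $ab\neq1$, and clearing denominators in $h_{2m}(a)=b$, $h_{2m}(b)=a$ gives $a^{2m-1}(ab-1)=b-1$ and $b^{2m-1}(ab-1)=a-1$, whence $a^{2m-1}(a-1)=b^{2m-1}(b-1)$, i.e.\ $F(a)=F(b)$ for $F(x):=x^{2m}-x^{2m-1}$; but $F'(x)=x^{2m-2}(2mx-2m+1)<0$ on $(-\infty,0)$, so $F$ is injective there, forcing $a=b$. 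The same argument (with $F'\ge0$ on $[\tfrac{2m-1}{2m},+\infty)$) rules out $2$-cycles in $[\tfrac{2m-1}{2m},+\infty)$.

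For~(i): if $k_0\notin\calB_{k,2m}$, take $\ell=0$; if $k_0\in(c_{2m,1},-1)$, then $k_1=h_{2m}(k_0)\in(-\infty,c_{2m,1})$, so either $k_1\le c_{2m,2}$ (and $\ell=1$) or $k_1\in I_1:=(c_{2m,2},c_{2m,1})$, reducing to the case $k_0\in I_1$. By the itinerary, $g:=h_{2m}\circ h_{2m}$ maps $I_1$ into $(-\infty,c_{2m,1})$, is continuous on $I_1$ (since $h_{2m}(I_1)=(c_{2m,1},-1)$ avoids $\pm1$), is increasing, and $g(k)\to-\infty$ as $k\to c_{2m,2}^{+}$. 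A fixed point of $g$ in $I_1$ would be a fixed point of $h_{2m}$ (impossible: $c_{2m,0}>0$, $c_{2m,1}\notin I_1$) or a point of a $2$-cycle of $h_{2m}$ in $(-\infty,0)$ (impossible); so $g-\mathrm{id}$ is continuous, nonvanishing on the interval $I_1$, and negative near $c_{2m,2}^{+}$, hence $g(k)<k$ on all of $I_1$. If $k_{2j}\in I_1$ for every $j$, then $\{k_{2j}\}$ strictly decreases and is bounded below by $c_{2m,2}$, so $k_{2j}\to L\in[c_{2m,2},c_{2m,1})$; $L>c_{2m,2}$ gives $g(L)=L$, contradicting $g<\mathrm{id}$ on $I_1$, while $L=c_{2m,2}$ gives $k_{2j+2}=g(k_{2j})\to-\infty$, contradicting $k_{2j+2}\to c_{2m,2}$. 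Hence some $k_{2j}\notin I_1$; as $k_{2j}=g(k_{2j-2})\in(-\infty,c_{2m,1})$, in fact $k_{2j}\le c_{2m,2}$, so $k_{2j}\notin\calB_{k,2m}$, proving~(i).

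For~(ii), let $\ell$ be the least index with $k_\ell\notin\calB_{k,2m}$. If $\ell\ge1$ then $k_\ell=h_{2m}(k_{\ell-1})$ with $k_{\ell-1}\in\calB_{k,2m}\subset(-\infty,-1)$, so by the itinerary $k_\ell\in(-\infty,-1)\setminus\{c_{2m,1}\}$, and with $k_\ell\notin(c_{2m,2},c_{2m,1})$ this forces $k_\ell\le c_{2m,2}$; if $\ell=0$, $k_0$ lies anywhere in $\bbR\setminus(\calB_{k,2m}\cup\{1\})$. Moreover $h_{2m}(k)=0$ only for $k=1$ and $h_{2m}(k)=1$ only for $k\in\{0,1\}$, so $k_0\notin\{0,1\}$ propagates to $k_n\notin\{0,1\}$ for all $n$; thus $k_\ell\neq0,1$ always. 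Now: if $k_\ell=-1$, then $e_{\ell+1}=-e_\ell$, so $f(e_{\ell+1})=f(e_\ell)$ and \eqref{eq:secantMethod} gives $e_{\ell+2}=\infty$; if $k_\ell=c_{2m,2}$, then $k_{\ell+1}=h_{2m}(c_{2m,2})=-1$, reducing to the previous case; if $k_\ell=c_{2m,1}$, then $k_n\equiv c_{2m,1}$ for $n\ge\ell$ and $|e_n|=|c_{2m,1}|^{\,n-\ell}|e_\ell|\to\infty$ since $|c_{2m,1}|>1$. In each of these cases $\{e_n\}$ diverges. Otherwise $k_\ell\in(-\infty,c_{2m,2})\cup(-1,0)\cup(0,1)\cup(1,+\infty)$, and following the itinerary the orbit eventually enters the forward-invariant interval $[\tfrac{2m-1}{2m},\rho]$ with $\rho:=h_{2m}(\tfrac{2m-1}{2m})\in(\tfrac{2m-1}{2m},1)$, on which $|h_{2m}'|=|b|<1$ (by the bound on $b(k)$ in the proof of Lemma~\ref{lem:dnPrimeBound} with $m$ replaced by $2m$; directly, $|h_2'(k)|=1/(1+k)^{2}<1$ for $k\ge\tfrac12$ when $m=1$). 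By the contraction principle the iterates converge to the unique fixed point of $h_{2m}$ in $[\tfrac{2m-1}{2m},\rho]$, which is $c_{2m,0}$; hence $|e_{n+1}|/|e_n|=|k_n|\to c_{2m,0}\in(0,1)$, so $\{e_n\}$ converges linearly. Combining the cases (and $k_\ell\neq0,1$), $\{e_n\}$ converges linearly if and only if $k_\ell\notin\{-1,0,1,c_{2m,1},c_{2m,2}\}$.

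The main obstacle is the no-$2$-cycle statement --- equivalently, the strict inequality $g<\mathrm{id}$ on $I_1$ that drives~(i); the short reduction to the injectivity of $F(x)=x^{2m}-x^{2m-1}$ disposes of it, after which everything else is routine bookkeeping with the piecewise-monotone map $h_{2m}$.
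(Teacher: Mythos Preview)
Your proof is correct, and for part~(ii) it follows essentially the paper's strategy: track the itinerary of $h_{2m}$ until the orbit enters a forward-invariant subinterval of $(\tfrac{2m-1}{2m},1)$, then invoke the contraction bound $|h_{2m}'|<\tfrac{2m-1}{2m}$ (which is exactly the bound on $b(k)$ from Lemma~\ref{lem:dnPrimeBound}) to force convergence to the unique fixed point $c_{2m,0}$.

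For part~(i), however, your argument is genuinely different from the paper's. The paper proceeds by a direct derivative estimate: it shows $|h_{2m}'(k)|>C>1$ on $(c_{2m,2},-1)$, treating $m=1$ and $m>1$ separately with somewhat laborious inequalities, and then concludes that $|k_{\ell+1}-k_\ell|$ grows geometrically until it exceeds the diameter of $\calB_{k,2m}$. You instead argue qualitatively: from the monotonicity of $h_{2m}$ you know $g=h_{2m}\circ h_{2m}$ is increasing on $I_1=(c_{2m,2},c_{2m,1})$, and you rule out fixed points of $g$ there by reducing the $2$-cycle equation to $F(a)=F(b)$ with $F(x)=x^{2m}-x^{2m-1}$ injective on $(-\infty,0)$; then $g<\mathrm{id}$ on $I_1$, and a monotone-convergence argument yields the contradiction. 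Your route avoids the $m=1$/$m>1$ case split and the explicit lower bound on $|h_{2m}'|$, trading computation for a clean structural observation; the paper's route, on the other hand, gives a quantitative escape rate. One small point: your itinerary list omits the step $h_{2m}\bigl((0,\tfrac{2m-1}{2m})\bigr)\subseteq(\tfrac{2m-1}{2m},1)$ needed to bridge from $(1,+\infty)$ into the invariant interval, but this is immediate from $h_{2m}((0,1))=(\tfrac{2m-1}{2m},1)$ and is covered by your phrase ``following the itinerary the orbit eventually enters.''
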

\begin{proof}
  The recurrence for $\{k_n\}$ is given by 
   $k_{n} = h_{2m}(k_{n-1})$ as derived in \eqref{eq:knRec2m}.
  Taking the derivative of $h_{2m}(k)$ yields 
  \begin{equation}
    \label{eq:hprimeform}
    \renewcommand{\arraystretch}{1.2}
    \begin{array}{ll}
      h_{2m}'(k) =  \frac{2mk^{2m-1} - (2m-1) k^{2m-2} - k^{4m-2}}
                    {(1 - k^{2m})^2}.
    \end{array}
  \end{equation}
    
  We first prove that there exists $k_\ell \notin \calB_{k, 2m}$ 
   in the sequence $\{k_n\}$.
  Suppose, for contradiction, 
   that for all $\ell \in \bbN$, $k_\ell \in \calB_{k, 2m}$.
  Then, by the definition of $\calB_{k, 2m}$ and Example \ref{exam:k0eqc1},
   $k_\ell$ cannot be either of the two fixed points
   $c_{2m, 0}$ and $c_{2m, 1}$ of $h_{2m}(k)$.
  Hence $h_{2m}(k_{\ell}) \neq k_{\ell}$, which, 
   combined with \eqref{eq:knRec2m}, implies
   $k_{\ell+1} \neq k_\ell$.
  
  For $m = 1$, we have 
   $h_{2}(k) = \frac{1}{1 + k}$, $c_{2, 1}=-\frac{1 + \sqrt{5}}{2}$, 
   and $c_{2, 2}=-2$.
  If $k_\ell \in \left(-\frac{3}{2}, -1\right)$, then 
   $k_{\ell + 1} = h_{2}(k_\ell) < -2$, 
   which contradicts $k_{\ell +1} \in \calB_{k, 2m}$.
  If $k_\ell \in \left(-2,-\frac{5}{3}\right)$, then 
   $k_{\ell + 1} = h_{2}(k_\ell) \in \left(-\frac{3}{2}, -1\right)$ gives 
   $k_{\ell + 2} = h_{2}(k_{\ell + 1}) < -2$, 
   which contradicts $k_{\ell + 2} \in \calB_{k, 2m}$.
  Hence for all $\ell \in \bbN$, 
   $k_\ell \in \left[-\frac{5}{3}, c_{2, 1}\right) 
          \cup \left(c_{2, 1}, -\frac{3}{2}\right]$. 
  Note that for $k \in \left[ -\frac{5}{3}, -\frac{3}{2} \right]$, 
   $|h_{2}'(k)| = \frac{1}{(1+k)^{2}} \geq \frac{9}{4} > 2$. 
  Theorem~\ref{thm:meanValue} and $k_{\ell}\neq k_{\ell-1}$ show that 
   there exists 
   $\hat{k}_{\ell, \ell - 1} \in \range(k_\ell ,k_{\ell - 1}) 
    \subseteq \left[-\frac{5}{3}, -\frac{3}{2}\right]$ such that
  \[
      |k_{\ell +1} - k_{\ell }| = |h_{2}(k_{\ell }) - h_{2}(k_{\ell -1})| 
    = |h_{2}'(\hat{k}_{\ell, \ell - 1} )(k_\ell -k_{\ell -1})|
    > 2 |k_\ell  - k_{\ell -1}|,
  \]
  where the first step follows from \eqref{eq:knRec2m}.
  By induction, we obtain $|k_{\ell +1} - k_{\ell }| > 2^{\ell} |k_1 - k_0|$.
  Therefore, when $\ell$ is sufficiently large such that
   $2^{\ell}|k_{1}-k_{0}| > -1 - c_{2, 2}$, it holds that
   $ |k_{\ell +1} - k_{\ell }| 
   > -1 - c_{2, 2} = \sup_{k_x,k_y \in \calB_{k, 2m}} |k_x - k_y|$, 
   which contradicts $k_{\ell }, k_{\ell +1} \in \calB_{k, 2m}$. 

  For $m > 1$, from \eqref{eq:tgValue}, we have 
  \[
    \forall k \in (c_{2m, 2}, -1), \quad
    {q}_{2m}(k) = k^{2m} + k^{2m-1} - 2 
                = |k|^{2m} - |k|^{2m-1} - 2  < 0.
  \] 
  Hence, \eqref{eq:hprimeform} yields that for any $k \in (c_{2m, 2}, -1)$, 
  \begin{displaymath}
    \renewcommand{\arraystretch}{1.2}
    \begin{array}{ll}
      & \hspace{0.3cm}
         (1 - k^{2m})^2(|h_{2m}'(k)| - 1) \\
      &= 2m|k|^{2m-1} + (2m-1)|k|^{2m-2} + |k|^{4m-2} 
         - 1 + 2|k|^{2m} - |k|^{4m} \\
      &= 2m|k|^{2m-1} + (2m-1)|k|^{2m-2} + |k|^{4m-2} 
         - 1 + |k|^{2m}(2 - |k|^{2m}) \\
      &> 2m|k|^{2m-1} + (2m-1)|k|^{2m-2} + |k|^{4m-2} 
         - 1 + |k|^{2m}(-|k|^{2m-1})  \\
      &= 2m|k|^{2m-1} + (2m-1)|k|^{2m-2} 
         - 1 + |k|^{2m-1}(|k|^{2m-1} - |k|^{2m}) \\
      &> 2m|k|^{2m-1} + (2m-1)|k|^{2m-2} - 1 - 2|k|^{2m-1} \\
      &= 2(m-1)|k|^{2m-1} + (2m-1)|k|^{2m-2} - 1 
      \\ & 
      > 0+ 3|k|^{2m-2} - 1 > 1,
    \end{array}
  \end{displaymath}
   where the first step follows from the fact that
   $2mk^{2m-1}$, $-(2m-1)k^{2m-2}$ and $-k^{4m-2}$ are negative,
   and the third and fifth steps from $|k|^{2m} - |k|^{2m-1} - 2 < 0$.
  This shows that
  \[
    \renewcommand{\arraystretch}{1.2}
    \begin{array}{ll}
      \forall k \in (c_{2m, 2},-1),\quad
      |h_{2m}'(k)| > 1 + \frac{1}{(1 - k^{2m})^2} 
                   > C := 1 + \frac{1}{(2^{2m} - 1)^2} > 1,
    \end{array}
  \]
   where the second step follows from $k\in(c_{2m, 2},-1) \subseteq (-2, -1)$.
  Then Theorem~\ref{thm:meanValue} and $k_{\ell}\neq k_{\ell-1}$ imply that
   there exists 
   $\hat{k}_{\ell , \ell -1} \in \range(k_\ell ,k_{\ell - 1}) 
                             \subseteq \left(c_{2m, 2}, -1\right)$ such that
  \[
      |k_{\ell +1} - k_{\ell }| = |h_{2m}(k_{\ell }) - h_{2m}(k_{\ell -1})| 
    = |h_{2m}'(\hat{k}_{\ell , \ell -1})(k_\ell -k_{\ell -1})|
    > C |k_\ell  - k_{\ell -1}|,
  \]
   where the first step follows from \eqref{eq:knRec2m}. 
  By induction, we obtain $|k_{\ell +1} - k_{\ell }| > C^{\ell} |k_1 - k_0|$.
  Therefore, when $\ell$ is sufficiently large such that
   $C^{\ell}|k_{1} - k_{0}| > -1 - c_{2m, 2}$, it holds that 
   $|k_{\ell +1} - k_{\ell }| > -1 - c_{2m, 2} 
                              = \sup_{k_x, k_y \in \calB_{k, 2m}} |k_x - k_y|$, 
   which contradicts $k_{\ell }, k_{\ell+1} \in \calB_{k, 2m}$. 

  In summary, 
   there exists $k_\ell \notin \calB_{k, 2m}$ in the sequence $\{k_n\}$.   

  Next, let $k_\ell$ denote the first value in $\{k_n\}$ 
   that belongs to $\bbR \setminus \calB_{k, 2m}$.
  We prove that $k_\ell \notin \{-1, 0, 1, c_{2m, 1}, c_{2m, 2}\}$ 
   is a necessary and sufficient condition for Q-linear convergence 
   of the error sequence $\{e_n\}$.
    
  Necessity: 
  Examples~\ref{exam:k0eqc1} and~\ref{exam:k0eqminus2} have verified 
   that when $k_\ell \in \{-1, c_{2m, 1}, c_{2m, 2}\}$, 
   the error sequence $\{e_{n}\}$ diverges.
  If $k_{\ell} = 1$, then $e_{\ell} = e_{\ell+1}$, 
   the secant method formula \eqref{eq:secantMethod} is not well-defined.
  The case $k_\ell = 0$ is excluded by Hypothesis~\ref{hyp:nonTerminate}.

  Sufficiency: 
  For any $k \in (-\infty, c_{2m, 2})$, since $c_{2m, 2} < -1$, 
   we have $k^{2m} - 1 > 0$ and $h_{2m}(k) < 0$. 
  Then \eqref{eq:tgValue} gives
  \begin{displaymath}
    \renewcommand{\arraystretch}{1.2}
    \begin{array}{ll}
      h_{2m}(k)  = \frac{1- k^{2m-1}}{1 - k^{2m}} 
            = \frac{2- k^{2m-1} - k^{2m}}{1 - k^{2m}} - 1 
            = \frac{q_{2m}(k)}{k^{2m} - 1} -1 
            > \frac{q_{2m}(c_{2m, 2})}{k^{2m} - 1} - 1 = -1. 
    \end{array}
  \end{displaymath}
  Moreover, when $k \notin \{-1, 0, 1\}$, algebraic computation yields 
  \begin{displaymath}
    \renewcommand{\arraystretch}{1.2}
    \begin{array}{ll}
      h_{2m}(k) = \frac{1-k^{2m-1}}{1 - k^{2m}} 
           = \frac{\sum_{j=0}^{2m-2} k^j}{\sum_{j=0}^{2m-1} k^j} 
           = 1 - \frac{k^{2m-1}}{\sum_{j=0}^{2m-1} k^j} 
           = 1 - \frac{1}{\sum_{j=0}^{2m-1} \frac{1}{k^j}}
           = 1 - \frac{1 - \frac{1}{k}}{1 - \frac{1}{k^{2m}}}. 
    \end{array}
  \end{displaymath}
  From the above, we can verify that 
  \begin{displaymath}
    \renewcommand{\arraystretch}{1.2}
    \begin{array}{ll}
      &\forall k \in (-\infty, c_{2m, 2}),\ h_{2m}(k) \in (-1, 0), 
       \hspace{0.58cm} 
       \forall k \in (-1, 0),\ h_{2m}(k) \in (1, +\infty), \\
      &\forall k \in (1, +\infty),\ h_{2m}(k) \in (0, 1),    
       \hspace{1.48cm} 
       \forall k \in (0, 1),\ h_{2m}(k) \in \left(\frac{2m-1}{2m}, 1\right), \\
      &\forall k \in \left(\frac{2m-1}{2m}, 1\right),\ 
       h_{2m}(k) \in \left(\frac{2m-1}{2m}, a \right) 
            \subseteq \left[\frac{2m-1}{2m}, a \right],
    \end{array}
  \end{displaymath}
  where $a := 1 - \frac{(2m-1)^{2m-1}}{2m^{2m} - (2m-1)^{2m}} 
           \in \left(\frac{2m - 1}{2m}, 1 \right)$.
  Hence from $k_\ell \notin \{-1, 0, 1, c_{2m, 1}, c_{2m, 2}\}$ 
   and $k_\ell \notin \calB_{k, 2m}$, i.e., 
   $k_\ell \in(-\infty, c_{2m, 2}) \cup (-1, 0) \cup (0, 1) \cup (1, +\infty)$,
   it follows that
  \begin{displaymath}
    \renewcommand{\arraystretch}{1.2}
    \begin{array}{ll}
      &k_{\ell+1} = h_{2m}(k_\ell) \in (-1, 0) \cup (0, 1) \cup (1, +\infty),\\
      &k_{\ell+2} = h_{2m}(k_{\ell+1}) \in (0, 1) \cup (1, +\infty),\\
      &k_{\ell+3} = h_{2m}(k_{\ell+2}) \in (0, 1), \\
      &k_{\ell+4} = h_{2m}(k_{\ell+3}) \in \left(\frac{2m-1}{2m}, 1 \right), \\
      &k_{\ell+5} = h_{2m}(k_{\ell+4}) \in \left(\frac{2m-1}{2m}, a \right) 
                                    \subseteq \left[\frac{2m-1}{2m}, a \right].
    \end{array}
  \end{displaymath}
  By induction, for any $N > 0$, 
   $k_{N + \ell + 4}\in \left[\frac{2m-1}{2m}, a \right]$.
  Since $a \in \left(\frac{2m-1}{2m}, 1 \right)$ implies 
   $a \in \left(\frac{4m-3}{4m}, 1\right)$, 
   replacing $m$ and $r$ with $2m$ and $a$, respectively, 
   in $b(k)$ as defined in \eqref{eq:bkForm} 
   yields $b(k) = h_{2m}'(k)$.
  Then $\dom(h_{2m}') = \left[\frac{2m-1}{2m}, a\right] 
                 \subseteq \left[\frac{4m-3}{4m}, a\right] = \dom(b) $.
  Therefore, from \eqref{eq:bkRelease} and $b(k) < 0$, we obtain
  \begin{displaymath}
    \renewcommand{\arraystretch}{1.2}
    \begin{array}{ll}
      \forall k \in \left[\frac{2m-1}{2m}, a\right], \quad 
      |h_{2m}'(k)| < \frac{2m-1}{2m}.
    \end{array}
  \end{displaymath}

  For any $ N > 0$ and $ L > \ell + 5$, 
   if there exists an integer $\hat{i}$ 
   with $0 < \hat{i} \leq L - \ell - 5$ such that
   $|k_{N + L - \hat{i}} - k_{L - \hat{i}}| = 0$,
   then \eqref{eq:knRec2m} means that
   $|k_{N + L - \hat{i} + 1} - k_{L - \hat{i} + 1}| 
   = |h_{2m}(k_{N + L - \hat{i}}) - h_{2m}(k_{L - \hat{i}})| = 0$.
  By induction, it holds that for all $\hat{L} \ge L - \hat{i}$, 
   $|k_{N + \hat{L}} - k_{\hat{L}}| = 0$,
   and hence for any $N > 0$, $\lim\limits_{L\to\infty}|k_{N+L}-k_{L}| = 0$.
  Now assume that for all $0 < i \leq L - \ell - 5$, 
   $|k_{N + L - i} - k_{L - i}| \neq 0$.
  Then Theorem~\ref{thm:meanValue} yields
  \begin{displaymath}
    \renewcommand{\arraystretch}{1.2}
    \begin{array}{ll}
      & \hspace{0.2cm} \exists \hat{k}_{N + L - i, L - i} 
       \in \range(k_{N + L - i}, k_{L - i}) 
       \subseteq \left[\frac{2m-1}{2m}, a\right] \\
       \st
      &\hspace{0.35cm}  |k_{N + L - i + 1} - k_{L - i + 1}| 
       = |h_{2m}(k_{N + L - i}) - h_{2m}(k_{L - i})| \\
      &= |h_{2m}'(\hat{k}_{N + L - i, L - i})(k_{N + L - i} - k_{L - i})| 
      \leq \frac{2m-1}{2m}|k_{N + L - i} - k_{L - i}|, 
    \end{array}
  \end{displaymath}
   where the first step follows from \eqref{eq:knRec2m}.
  Therefore, an induction implies that
   $|k_{N + L} - k_{L}| \leq \left( \frac{2m-1}{2m}\right)^{L - \ell - 5}
                             |k_{N + \ell + 5} - k_{\ell  + 5}|$,
   which shows                          
  \begin{displaymath}
    \renewcommand{\arraystretch}{1.2}
    \begin{array}{rl}
      &\forall N > 0, \quad
          \lim\limits_{L \to \infty} |k_{N + L} - k_{L}| 
      \le \lim\limits_{L \to \infty}\left(\frac{2m-1}{2m}\right)^{L - \ell - 5}
                              |k_{N + \ell + 5} - k_{\ell  + 5}| 
      \\ & \hspace{4.45cm}
      \le \lim\limits_{L \to \infty}\left(\frac{2m-1}{2m}\right)^{L - \ell - 5} 
                                    \cdot 2 
      = 0.
    \end{array}
  \end{displaymath} 
  This means that $\{k_n\}$ is a Cauchy sequence, 
   and it converges to some $\hat{c} \in \left[ \frac{2m-1}{2m}, a \right]$. 

  As in \eqref{eq:hFixPoint}, $\hat{c}$ must be a fixed point of $h_{2m}(k)$.
  Example~\ref{exam:k0eqc1} shows that $h_{2m}(k)$ has 
   a unique fixed point $c_{2m, 0} \in \left[\frac{2m-1}{2m}, a \right]$,
   which forces that $\{k_n\}$ converges to $\hat{c} = c_{2m, 0} \in (0, 1)$.
  Hence for any $ e_0 \neq 0$, 
   the error sequence $\{e_n\}$ converges linearly. 
\end{proof}

The result of Lemma~\ref{lem:fx2mConvergence} can be generalized to establish
 a sufficient condition for Q-linear convergence 
 of the error sequence $\{e_n\}$ when $0$ is a $2m$-fold root 
 of a more general function $f(x) \in \calC^{2m+1}(\calB)$.
\begin{theorem}
  \label{thm:fx2mCommConvergence}
  Consider a function $f(x) \in \calC^{2m+1}(\calB)$ with 
   $m \in \bbN^+$, and $0$ is a $2m$-fold root of $f$. 
  For given initial values $k_0, e_0$, define
   $\tilde{k}_0 := k_0$, and let the sequence $\{\tilde{k}_n\}$ be defined 
   by the iteration $\tilde{k}_{n+1} := h_{2m}(\tilde{k}_n)$, 
   where $h_{2m}$ is defined in \eqref{eq:hform}. 
  If the sequence $\{\tilde{k}_n\}$ is such that 
   the first term $\tilde{k}_\ell$ that satisfies
   $\tilde{k}_\ell \notin \calB_{k, 2m}$ 
   also satisfies 
   $\tilde{k}_\ell \notin \{-1, 0, 1, c_{2m, 1}, c_{2m, 2}\}$, 
   then for sufficiently small $|e_0|$, 
   the error sequence $\{e_n\}$ converges linearly.
\end{theorem}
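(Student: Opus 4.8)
The plan is to reduce the statement to Theorem~\ref{thm:EnLinearConvergence} by a finite-horizon ``shadowing'' argument: for $|e_0|$ small the first several iterates $k_n$ of the secant method for $f$ stay close to the corresponding terms $\tilde k_n$ of the model iteration $\tilde k_{n+1}=h_{2m}(\tilde k_n)$, long enough to force $k_n$ into $(0,1)$, whence Theorem~\ref{thm:EnLinearConvergence} gives the conclusion. First I would record the geometry of $\{\tilde k_n\}$. Since $\calB_{k,2m}=(c_{2m,2},c_{2m,1})\cup(c_{2m,1},-1)\subseteq(-2,-1)$, one has $\bbR\setminus\big(\calB_{k,2m}\cup\{-1,0,1,c_{2m,1},c_{2m,2}\}\big)=(-\infty,c_{2m,2})\cup(-1,0)\cup(0,1)\cup(1,+\infty)=:\mathcal{G}$, so the hypothesis reads $\tilde k_\ell\in\mathcal{G}$, while $\tilde k_0,\dots,\tilde k_{\ell-1}\in\calB_{k,2m}\subseteq(-2,-1)$. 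Applying the computation from the sufficiency part of the proof of Lemma~\ref{lem:fx2mConvergence} to $\{\tilde k_n\}$ (which obeys the same recursion $\tilde k_{n+1}=h_{2m}(\tilde k_n)$) gives $\tilde k_{\ell+1}\in(-1,0)\cup(0,1)\cup(1,+\infty)$, $\tilde k_{\ell+2}\in(0,1)\cup(1,+\infty)$, and $\tilde k_{\ell+3}\in(0,1)$; in particular no $\tilde k_j$ with $0\le j\le\ell+3$ equals $\pm1$. Fix $\rho>0$ so small that $K:=\bigcup_{j=0}^{\ell+3}[\tilde k_j-\rho,\tilde k_j+\rho]$ is disjoint from $\{-1,1\}$ and $[\tilde k_{\ell+3}-\rho,\tilde k_{\ell+3}+\rho]\subseteq(0,1)$, put $R:=1+\max_{k\in K}|k|$, and let $L$ be a Lipschitz constant for the smooth map $h_{2m}$ on $K$.

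The heart of the argument is the following induction: for every $\epsilon\in(0,\rho)$ there is $\epsilon_0>0$ such that $|e_0|<\epsilon_0$ forces, for all $n=0,1,\dots,\ell+3$, that (a) the secant iteration for $f$ is well defined up to index $n$, (b) $|k_n-\tilde k_n|<\epsilon$, and (c) $|e_n|\le R^n|e_0|$. The case $n=0$ is immediate. For the inductive step, (c) follows from $e_{n+1}=k_ne_n$ together with $|k_n|<|\tilde k_n|+\rho\le R$; well-definedness holds because Taylor's theorem yields $f(e_n)-f(k_ne_n)=\tfrac{f^{(2m)}(0)}{(2m)!}(1-k_n^{2m})e_n^{2m}+O(e_n^{2m+1})\neq0$ once $|e_n|$ is small, as $1-k_n^{2m}$ is bounded away from $0$ on $K$; and for (b), combining \eqref{eq:dlklrelation} with the expansion \eqref{eq:dlForm} (valid for $k_n$ in the bounded set $K$, the $O((e_nk_n)^{2m+1})$ contributions being absorbed since $|k_n|\le R$) gives $k_{n+1}=d_n(k_n)=\frac{(1-k_n^{2m-1})+O(e_n)}{(1-k_n^{2m})+O(e_n)}$, while $\tilde k_{n+1}=h_{2m}(\tilde k_n)$, so $|k_{n+1}-\tilde k_{n+1}|\le|d_n(k_n)-h_{2m}(k_n)|+|h_{2m}(k_n)-h_{2m}(\tilde k_n)|$, in which the second summand is at most $L|k_n-\tilde k_n|$ and the first is made arbitrarily small (for $|e_n|$ small) by Lemma~\ref{lem:epsLimit} applied with $A(k)=1-k^{2m-1}$ and $B(k)=1-k^{2m}$ on $K$, where $|B|$ is bounded below and $|A/B|=|h_{2m}|$ is bounded above. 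Since the horizon $\ell+3$ is fixed, propagating these estimates amounts to absorbing a geometric amplification by the constant $L^{\ell+3}$ and finitely many re-injected $O(e_n)$-errors that all tend to $0$ as $e_0\to0$; choosing $\epsilon_0$ accordingly closes the induction. I expect this bookkeeping, together with the need (below) to know that the smallness threshold in Theorem~\ref{thm:EnLinearConvergence} is locally uniform in the initial value, to be the only points requiring genuine care---both are routine.

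To finish, take $\epsilon<\rho$ and $|e_0|<\epsilon_0$: then (b) gives $k_{\ell+3}\in[\tilde k_{\ell+3}-\rho,\tilde k_{\ell+3}+\rho]\subseteq(0,1)$ and (c) gives $|e_{\ell+3}|\le R^{\ell+3}|e_0|$, which by shrinking $\epsilon_0$ further we may assume lies below the smallness threshold of Theorem~\ref{thm:EnLinearConvergence} for initial values in a fixed compact neighborhood of $\tilde k_{\ell+3}$ inside $(0,1)$ (for $k_0\in(0,1)$ one has $\max(e_0,k_0e_0)=e_0$, and inspection of that proof shows the threshold may be taken uniform over such a neighborhood). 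Restarting the secant iteration with initial data $k_0\leftarrow k_{\ell+3}$, $e_0\leftarrow e_{\ell+3}$ reproduces exactly the tail $\{e_n\}_{n\ge\ell+3}$, so Theorem~\ref{thm:EnLinearConvergence} shows that this tail---hence $\{e_n\}$ itself, linear convergence being a tail property---converges linearly, with AEC $c_{2m,0}$. (Alternatively one may stop the shadowing at step $\ell$, where $k_\ell\in\mathcal{G}$, and invoke Lemma~\ref{lem:knMoreRangeEven} directly, again with a local-uniformity check on its smallness threshold near $\tilde k_\ell$.)
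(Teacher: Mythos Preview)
Your proposal is correct and follows essentially the same shadowing strategy as the paper: track $k_n$ against the model iterates $\tilde k_n$ over a fixed finite horizon using the expansion \eqref{eq:dlForm} together with Lemma~\ref{lem:epsLimit}, then hand off to an earlier convergence result. The paper stops the shadowing at step $\ell$ (exactly your ``alternative'') and invokes Lemma~\ref{lem:knMoreRangeEven}, whereas your primary route continues three more steps to land in $(0,1)$ and apply Theorem~\ref{thm:EnLinearConvergence} directly; you are also more explicit than the paper about the need for local uniformity of the smallness threshold at the handoff, a point the paper leaves implicit.
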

\begin{proof}
  By Lemma~\ref{lem:fx2mConvergence}, there exists $N \geq 0$ such that 
   for all $ n < N, \tilde{k}_n \in \calB_{k, 2m} $ 
   and $\tilde{k}_N \notin \calB_{k, 2m}$.
    
  For $N = 0$, we have $k_0 = \tilde{k}_0 \notin \calB_{k, 2m}$ 
   and $k_0 \notin \{-1, 0, 1, c_{2m, 1}, c_{2m, 2}\}$, that is,
   $k_{0} \in (-\infty, c_{2m, 2}) \cup (-1, 0) \cup (0, 1) 
              \cup (1, +\infty)$.
  Lemma~\ref{lem:knMoreRangeEven} shows that 
   when $|e_0|$ is sufficiently small, 
   the error sequence $\{e_n\}$ converges linearly.
  
  For $N > 0$, we prove by induction 
   that
  \[
    \forall n \leq N,\forall \epsilon_n > 0,\ 
    \exists \delta_n > 0 \st \forall |e_0| < \delta_n,\
     |k_{n} - \tilde{k}_n| < \epsilon_n.
   \]
  When $n = 0$, $k_0 = \tilde{k}_0$,
   hence, $|k_{0}-\tilde{k}_{0}|<\epsilon_{0}$ always holds.
  Now assume that the statement holds for $0, 1, \ldots, n < N$. 
  Then we prove that the statement holds for $n + 1$, i.e.,
  \[
      \forall \epsilon_{n+1} > 0, \exists \delta_{n+1} > 0 
  \st \forall |e_0| < \delta_{n+1},\ |k_{n+1} - \tilde{k}_{n+1}| 
                    < \epsilon_{n+1}.
  \]

  Since for $\ell \leq n < N$, 
   $\tilde{k}_\ell \in \calB_{k, 2m} 
                   \subseteq (c_{2m, 2}, -1)$, 
   by the inductive hypothesis, for
  \begin{equation}
    \label{eq:deltaN+10}
      \epsilon_\ell 
    = \frac{1}{2}\min(|\tilde{k}_\ell  - c_{2m, 2}|, 
                      |\tilde{k}_\ell  + 1|) 
    > 0,
      \exists \delta_\ell  > 0, 
    \st \forall |e_0| < \delta_{\ell },\ 
            |k_{\ell } - \tilde{k}_\ell | < \epsilon_\ell.
  \end{equation}
  Denote $\delta_{n+1, 0} := \min_{\ell=0}^{n} (\delta_\ell) > 0$.
  Then when $|e_0| < \delta_{n+1, 0}$,
   it follows that for all
   $\ell\leq n$, $|k_{\ell } - \tilde{k}_\ell | < \epsilon_\ell$. 
  Combined with
   $\epsilon_\ell \le \frac{1}{2}|\tilde{k}_\ell  + 1|$ 
   and $\tilde{k}_\ell  < -1$, we obtain
  \begin{equation}
    \label{eq:knInTildeknDomain1} 
    \renewcommand{\arraystretch}{1.2}
    \begin{array}{ll}
      &k_\ell  
       < \tilde{k}_\ell  + \epsilon_\ell  
       \le \tilde{k}_\ell + \frac{1}{2}|\tilde{k}_\ell  + 1|
       = \tilde{k}_\ell  + \frac{1}{2}(-1-\tilde{k}_\ell) \\
      & \hspace{.4cm}
       = -1 + \frac{1}{2}(\tilde{k}_\ell  + 1)
       = -1 - \frac{1}{2}|\tilde{k}_\ell  + 1|
       \le -1 - \epsilon_\ell  < -1. 
    \end{array}
  \end{equation}
  Similarly, 
   using $\epsilon_\ell \le \frac{1}{2}|\tilde{k}_\ell  - c_{2m, 2}|$ 
   and $\tilde{k}_\ell > c_{2m, 2}$, we can prove that $k_\ell > c_{2m, 2}$. 
  Therefore, $k_\ell \in (c_{2m, 2}, -1)$. 
  Moreover, since $c_{2m, 2} \in [-2, -1)$, it follows that for all
   $\ell \le n$, we have $|k_\ell| < 2$, and
  \begin{equation}
    \label{eq:xnbound}
    \renewcommand{\arraystretch}{1.2}
    \begin{array}{ll}
      |e_{n}| = |e_0| \prod_{\ell=0}^{n-1} |k_\ell| < 2^n |e_0| < 2^N |e_0| 
              = O(e_0).
    \end{array}
  \end{equation}

  Since $ n < N $, we have $ \tilde{k}_n \in \calB_{k, 2m} \subseteq (-2, -1)$.
  The function $h_{2m}(k)$ 
   is continuous at $\tilde{k}_n$,
   so the definition of continuity gives
  \[
      \forall \epsilon_{n+1} > 0, \exists \varepsilon_{n+1} > 0 
  \st \forall |k - \tilde{k}_n| < \varepsilon_{n+1},\ 
      |h_{2m}(k) - h_{2m}(\tilde{k}_{n})| < \frac{\epsilon_{n+1}}{2}.
  \]
  By the inductive hypothesis, for
  \[ 
      \varepsilon_{n+1} > 0, \exists \delta_{n+1, 1} > 0
  \st \forall |e_0| < \delta_{n+1, 1},\ 
      |k_n - \tilde{k}_n| < \varepsilon_{n+1},
  \]
   which implies
  \[
      \forall \epsilon_{n+1} > 0, \exists \delta_{n+1, 1} > 0
  \st \forall |e_0| < \delta_{n+1, 1},\ 
      |h_{2m}(k_n) - h_{2m}(\tilde{k}_{n})| < \frac{\epsilon_{n+1}}{2}.
  \]

  From \eqref{eq:deltaN+10} and \eqref{eq:knInTildeknDomain1}, we obtain 
   that $|e_0| < \delta_{n+1, 0} \leq \delta_n$ ensures
   $k_n < a_n := -1 -\epsilon_n < -1$. 
  Since $k_n > c_{2m, 2}$, we have 
   $a_n \in (k_n, -1) \subseteq (c_{2m, 2}, -1)$.
  For any $k \in (c_{2m, 2}, -1)$,  
   $1 - k^{2m-1} > 0$, $1 - k^{2m} < 0$, and
   $2mk^{2m-1} - (2m-1) k^{2m-2} - k^{4m-2} < 0$, 
   which, together with \eqref{eq:hform} and \eqref{eq:hprimeform}, yields
  \begin{displaymath}
  \renewcommand{\arraystretch}{1.2}
    \begin{array}{ll}
      h_{2m}(k) = \frac{1 - k^{2m-1}}{1 - k^{2m}} < 0, \quad
      h_{2m}'(k) = \frac{2mk^{2m-1} - (2m-1) k^{2m-2} 
                   - k^{4m-2}}{(1 - k^{2m})^2} 
                 < 0.
    \end{array}
  \end{displaymath}
  Hence, $h_{2m}(k)$ is strictly monotonically decreasing on $(c_{2m, 2}, -1)$.
  From $c_{2m, 2} < k_n < a_n < -1$, we have $h_{2m}(a_n) < h_{2m}(k_n) < 0$, 
   which implies
   $ \left|\frac{1 - {k}_{n}^{2m-1}}{1 - {k}_{n}^{2m}}\right| = |h_{2m}(k_n)| 
   < |h_{2m}(a_n)| = \left|\frac{1 - {a}_{n}^{2m-1}}{1 - {a}_{n}^{2m}}\right|$. 
  Combined with $\left|1 - {k}_{n}^{2m}\right| > a_n^{2m} - 1 > 0$, 
   Lemma~\ref{lem:epsLimit} and \eqref{eq:xnbound} give
  \begin{displaymath}
    \renewcommand{\arraystretch}{1.2}
    \begin{array}{ll}
      & \forall \epsilon_{n+1} > 0,\ \exists \delta_{n+1, 2} > 0 
    \st \forall |e_0| < \delta_{n+1, 2}, \\
      & \left|\frac{1 - {k}_{n}^{2m-1} + O(e_n)}{1 - {k}_{n}^{2m} + O(e_n)}  
      - \frac{1 - {k}_{n}^{2m-1}}{1 - {k}_{n}^{2m}}  \right|  
      = \left|\frac{1 - {k}_{n}^{2m-1} + O(e_0)}{1 - {k}_{n}^{2m} + O(e_0)}  
      - \frac{1 - {k}_{n}^{2m-1}}{1 - {k}_{n}^{2m}}  \right|  
      < \frac{\epsilon_{n+1}}{2}.
    \end{array}
  \end{displaymath}

  Therefore, for any $\epsilon_{n+1} > 0$, there exists 
   $\delta_{n+1} := \min(\delta_{n+1, 0}, \delta_{n+1, 1}, \delta_{n+1, 2}) 
                 > 0$ 
  such that when $|e_0| < \delta_{n+1}$, by the triangle inequality,
  \begin{displaymath}
    \renewcommand{\arraystretch}{1.2}
    \begin{array}{ll}
         & |k_{n+1} - \tilde{k}_{n+1}|
       = \left| k_{n+1}-h_{2m}(k_n)+h_{2m}(k_n)-\tilde{k}_{n+1} \right| 
      \\ & \hspace{2.2cm}
       = \left| \frac{1 - k_{n}^{2m-1} + O(e_n)}{1 - k_{n}^{2m} + O(e_n)} 
       - \frac{1 - {k}_{n}^{2m-1}}{1 - {k}_{n}^{2m}} 
       + h_{2m}(k_n) - h_{2m}(\tilde{k}_n)  \right| 
      \\ & \hspace{2.2cm}
       \le \left| \frac{1 - k_{n}^{2m-1} + O(e_n)}{1 - k_{n}^{2m} + O(e_n)} 
       - \frac{1 - {k}_{n}^{2m-1}}{1 - {k}_{n}^{2m}}  \right|
       + \left| h_{2m}(k_n)-h_{2m}(\tilde{k}_n)  \right|
      \\ & \hspace{2.2cm}
       < \frac{\epsilon_{n+1}}{2}+\frac{\epsilon_{n+1}}{2} 
       = \epsilon_{n+1},
    \end{array}
  \end{displaymath}
  which completes the induction step.
  
  Therefore, for $n = N$,
   since $\tilde{k}_N \notin \calB_{k, 2m}$ 
   and $\tilde{k}_N \notin \{-1, 0, 1, c_{2m, 1}, c_{2m, 2}\}$,
   we have $\tilde{k}_N \in (-\infty, c_{2m, 2}) \cup (-1, 0) \cup
                            (0, 1) \cup (1, +\infty)$. 
  On the one hand,                          
  taking $\epsilon_N = \min(|\tilde{k}_N - c_{2m, 2}|, |\tilde{k}_N + 1|, 
                            |\tilde{k}_N|, |\tilde{k}_N - 1|) > 0$, 
   there exists $\delta_N > 0$ such that when $|e_0| < \delta_N$, we have 
    $|k_N - \tilde{k}_N| < \epsilon_N $. 
  Then, following a derivation similar to \eqref{eq:knInTildeknDomain1},
    we obtain 
  \[k_N \in (-\infty, c_{2m, 2}) \cup (-1, 0) \cup (0, 1) \cup (1, +\infty).\]
  On the other hand, as in \eqref{eq:xnbound}, it holds 
   that $e_N < 2^N |e_0| = O(e_0)$ is sufficiently small.
  Consider taking $k_0 \leftarrow k_N, e_0 \leftarrow e_N$ 
   as new initial values for the iteration of the secant method. 
  Then $k_0, e_0$ satisfy the conditions 
   of Lemma~\ref{lem:knMoreRangeEven}, 
   and thus the error sequence $\{e_n\}$ converges linearly.
\end{proof}



\section{Conclusion}
\label{sec:conclusion}

This paper has presented a novel approach
 for rigorously establishing the Q-order of convergence of
 the secant method for both simple and multiple roots.
In contrast to the proofs found in existing textbooks and literature,
 the method proposed herein offers greater rigor.
Notably, for the case of multiple roots,
 it provides sufficient conditions for the convergence of
 the secant method, thereby furnishing theoretical guidance
 for the selection of initial values in the secant method.




\bibliographystyle{siamplain}
\bibliography{bib/secant.bib}
\end{document}